\newtheorem{theorem}{Theorem}
\newtheorem{proposition}[theorem]{Proposition}
\newtheorem{lemma}[theorem]{Lemma}
\newtheorem{conjecture}[theorem]{Conjecture}
\numberwithin{theorem}{section}
\newcommand{\dd}{\mathrm{d}}
\newcommand{\cha}{\mathbb{1}}
\newcommand{\eps}{\varepsilon}
\newcommand{\R}{\mathbb{R}}
\newcommand{\N}{\mathbb{N}}
\DeclareMathOperator{\sign}{sign}
\DeclareMathOperator{\inte}{int}
\DeclareMathOperator{\id}{id}
\DeclareMathOperator{\Leb}{Leb}
\DeclareMathOperator{\im}{im}
\newcommand{\Wul}{\mathcal{W}^u_{\rm loc}}
\newcommand{\C}{\mathcal{C}}
\newcommand{\D}{\mathcal{D}}
\renewcommand{\P}{\mathcal{P}}
\newcommand{\M}{\mathcal{M}}
\renewcommand{\S}{\mathcal{S}}
\newcommand{\condmeas}{\rho_\S}
\newcommand{\slice}{\empty}
\newcommand{\vv}{v}
\newcommand{\vu}{v^u}
\newcommand{\vs}{v^s}
\newcommand{\B}{\mathcal{B}}
\renewcommand{\H}{\mathbf{H}}
\newcommand{\Ht}{\H}
\renewcommand{\L}{\mathcal{L}}
\renewcommand{\O}{\mathcal{O}}
\newcommand{\cyi}{\mathbf{i}}
\newcommand{\cyj}{\mathbf{j}}
\newcommand{\bh}{_{(\beta)}}
\newcommand{\bhh}{_{(\beta+1)}}
\newcommand{\bg}{_{(\alpha)}}
\newcommand{\bgg}{_{(\alpha+1)}}
\newcommand{\bgh}{_{(\alpha,\beta)}}
\newcommand{\Aord}{1} 
\author{Caroline L. Wormell\thanks{Laboratoire de Probabilit\'es, Statistique et Mod\'elisation,
		Sorbonne Universit\'e, CNRS \\
		email: {\sf wormell@lpsm.paris}, {\sf ca.wormell@gmail.com}\\
		ORCID: 0000-0003-2953-6493}
}
\title{On convergence of linear response formulae in some piecewise hyperbolic maps} 
\begin{document}
	\maketitle
	
	\begin{abstract}
		When high-dimensional non-uniformly hyperbolic chaotic systems undergo dynamical perturbations, their long-time statistics are generally observed to respond differentiably with respect to the perturbation. Although important in applications, this differentiability, which is thought to be connected to the dimensionality of the system, has remained resistant to rigorous study. 
		
		To model non-uniformly hyperbolic systems, we consider a family of the mathematically tractable class of piecewise smooth hyperbolic maps, the Lozi maps. For these maps we prove that the existence of a formal derivative of the response reduces to an exponential mixing property of the SRB measure when conditioned on the map's singularity set. This property appears to be true and is of independent interest. Further study of this conditional mixing property may yield a better picture of linear response theory.
	\end{abstract}

	
Because of their irregular behaviour at a trajectory level, chaotic systems are largely studied in terms of their invariant measures. A widespread and physically relevant class of invariant measures are the so-called Sinai-Ruelle-Bowen (SRB) measures, which measure the proportion of time that, over the long term, a Lebesgue-generically initialised orbit will spend in any region of phase space. For example, in an atmospheric system, the SRB measure encodes climatic probabilities. One may therefore naturally ask how the SRB measures respond when the parameters of the system are changed. 

From this question the so-called linear response theory has developed, which aims to answer this question to first order. In an autonomous setting we might imagine, on some manifold $\M$, a one-parameter family of maps $f^\eps: \M \to \M$ with 
\begin{equation}\label{eq:Perturbation}
f^\eps(x) = f(x) + \eps X(f(x)) + o(\eps),
\end{equation}
each possessing an SRB measure $\rho^\eps$. The aim of linear response theory is to find a derivative of $\rho^\eps$ at $\eps = 0$, under the assumption that such a derivative exists, which is not always the case \cite{Baladi14}. 

When the system is mixing, the derivative can be expressed formally as a sum
\begin{equation}\label{eq:LinearResponseFormula}
\left.\frac{\dd}{\dd\eps} \int A\,\dd\rho^\eps \right|_{\eps = 0} = \sum_{n=0}^\infty \kappa_n,
\end{equation}
where the susceptibility coefficients are defined as
\begin{equation}\label{eq:Kappa}
\kappa_n = \int_\M \nabla(A\circ f^n)\cdot X\,\dd\rho 
\end{equation}
where $\rho := \rho^0$ is the unperturbed system's SRB measure. (An attraction of linear response theory is that this can be computed purely in terms of the unperturbed system.) 
When the susceptibility coefficients are summable, the derivative in \eqref{eq:LinearResponseFormula} typically exists in all examples where we have mathematical proofs of absolute convergence \cite{Baladi14}\footnote{Finiteness of analytic continuations of the susceptibility function $\Psi(z) = \sum_{n=0}^\infty \kappa_n z^n$ at $z=1$ is more suspect: see \cite{Jiang05} for an example where this $\Psi(z)$ continues to a bounded function at $z=1$, but linear response but does not obtain \cite{Baladi14}.}.

Nonetheless, the integrand in \eqref{eq:Kappa} can be expected to explode as $n\to\infty$, so one must apply something more to obtain a convergent sum, such as knowledge of the map's geometry. When the map is, say, a uniformly hyperbolic, $C^3$ diffeomorphism, this may be achieved by projecting $X$ in stable and unstable directions, and performing an integration by parts in \eqref{eq:Kappa} along unstable manifolds to secure convergence of the unstable component \cite{Ruelle97}. This integration by parts works because the SRB measure is (by definition) conditionally absolutely continuous along unstable manifolds, and in fact for uniformly hyperbolic diffeomorphisms its density is smooth with good control of derivatives.

On the other hand, uniformly hyperbolic maps are rather atypical among chaotic systems: in applications one is often confronted with tangencies between stable and unstable manifolds. In this case, the uniform splitting between stable and unstable directions cannot be applied and the unstable density contains unpleasant singularities. As a result, even using a particular generous interpretation of regularity, the map $\eps \mapsto \rho^\eps$ is no better than $1/2$-H\"older for generic families of quadratic maps, which model stable-unstable tangencies in one dimension \cite{Baladi15}. The reason for this is that singularities in the map generate singularities in the SRB measure along unstable manifolds, whose derivatives blow up when pushed forward by the map. This rationale would predict $C^{0.5^-}$ response for all maps with homoclinic tangencies: however, most sufficiently high-dimensional (non-uniformly hyperbolic) maps appear to have a linear response \cite{GallavottiCohen95a, WormellGottwald19}. Indeed, the linear response formula in one guise or another has been widely applied in the physical sciences.

In the last decade a hypothesis for this mismatch has been put forth by Ruelle \cite{Ruelle11,Ruelle18}: the map's singularities are supported on a cross-section of the attractor and thus form a fractal set. Intuitively, this set has a positive dimension, given by the stable dimension $d_s$. If the stable dimension $d_s$ is greater than $3/2$, the singularities in the measure, when projected onto a single unstable manifold, will therefore average out to an absolutely continuous measure, and this measure's derivative decays exponentially when pushed forward by the map, thus inducing linear response in this unstable map. Generalising this further, the response of such a non-uniformly hyperbolic system should generically be a little worse than $C^{d_s - 1/2}$ (in fact, almost-$C^{d_s + 1/2}$ regularity is claimed under optimistic claims on the exponential rate of mixing). However, this conjecture has seen little progress, in no small part because families of non-uniformly hyperbolic systems in more than one dimension are notoriously difficult to study.
\\

In this paper, therefore, we attempt to make progress through a rigorous study of a pertinent simpler class of maps, those of piecewise uniformly hyperbolic maps. The edges of the smooth pieces of these maps model tangencies between stable and unstable manifolds. Compared with non-uniformly hyperbolic maps they are much easier to deal with: already in linear response theory, their one-dimensional equivalents have been used as precursors to the logistic map \cite{Aspenberg19}. Furthermore, a robust theory of mixing of piecewise hyperbolic maps has already been developed \cite{DemersLiverani08, BaladiGouezel10}.

A very simple example of a piecewise hyperbolic map with a positive stable dimension is the Lozi map on $\R^2$ \cite{Lozi78, Misiurewicz80, ColletLevy84, Young85}
\begin{align}
	f(x,y) = (1 + y - a|x|, bx),\, b \neq 0 \label{eq:LoziDef}
\end{align}
It is worth noting that by replacing $|x|$ by $x^2$, one recovers the non-hyperbolic H\'enon map. For $a \geq 0$, $b \in (0,\min\{a-\sqrt{2},4-2a\})$ this map is known to have a unique SRB measure with exponential mixing for $C^1$ observables.

The response of such Lozi maps has been shown to be $\alpha$-H\"older for all $\alpha<1$ \cite[Theorem~2.13]{DemersLiverani08}. In the limit case $b = 0$ (i.e. the stable dimension $d_s$ is zero and we reduce to the one-dimensional tent map), the Lozi map's response is generically at best log-Lipschitz\footnote{A function $\phi$ is log-Lipschitz at $s$ if $\|\phi(t)-\phi(s)\| = \mathcal{O}(|t-s| \log |t-s|)$ as $t \to s$.}, i.e. slightly worse than Lipschitz or $C^1$ \cite{Keller82, BaladiSmania08}. In light of the aforementioned conjecture on non-uniformly hyperbolic systems \cite{Ruelle18}, it is natural to ask whether increasing the stable dimension of piecewise hyperbolic systems (and in particularly the Lozi map) improves the regularity of the response from the $d_s = 0$ baseline of log-Lipschitz. Since Lozi maps have dimension strictly greater than the number of positive Lyapunov exponents \cite{Schmeling98}, and hence strictly positive stable dimension\footnote{{\it c.f.} the proof of Proposition~\ref{p:CriticalLineSelfIntersectionMeasure} in the Appendix, which shows that the transverse measure is atomless.}, 
this is to ask:
\begin{quote}
	\it Do SRB measures of the Lozi map have linear response in general?
\end{quote}

This paper's main result, Theorem~\ref{t:Formal}, reduces the question of a formal linear response for Lozi maps (or conceptually, any piecewise hyperbolic system with exponential mixing) to, up to small generalisation, the existence of sufficiently fast {\it conditional mixing} on the singular line \cite{mix}:
\begin{quote}
	\it Let $\rho$ be the SRB measure of a Lozi map $f$, let $\condmeas$ be a scalar multiple of the (suitably defined) conditional measure of $\rho$ on the singularity set of $f$, and $A, B \in C^1(\R^2,\R)$. Is the following sequence exponentially decaying in $n$:
	\begin{equation} \int A \circ f^n\, B \dd\condmeas - \int A\,\dd\rho \int B \dd\condmeas.  \label{eq:CondMixing}\end{equation}
\end{quote}
This is to say that when the weighted conditional measure $B\condmeas$ is pushed forward under the map $f$, it (weakly) converges to the SRB measure, and does this exponentially quickly. 

In the preceding work \cite{mix} conjectured that \eqref{eq:CondMixing} holds for conditional measures of the SRB measures on generic lines, supported by careful numerical experiments on the singular line and strong analogous results in toy models. In this paper we conjecture (Conjecture~\ref{c:Full}) that exponential conditional mixing holds for the singular line in a specialised sense necessary for Theorem~\ref{t:Formal}. In Section~\ref{s:Results} of this paper, we also give numerical evidence indicating the existence and validity of the formal linear response.

The connection between exponential conditional mixing and linear response arises in the same way as in \cite{Ruelle18}, namely that the obstructions to linear response arise from
non-mixing of singularities in the smooth hyperbolic structure of the measure, which originate from that of the map. If these singularities mix quickly, then we expect linear response. 


Of course, we have merely related the question of (formal) existence of linear response for the Lozi map to the question of conditional mixing. While this latter question remains mathematically unresolved, we have clear evidence that it is true, it is seemingly of broader interest and may be answered in future.

We expect that in practice the regularity of the response is better than differentiable, and by analogy with the more optimistic statement in \cite{Ruelle18} would expect a slightly less than $C^{1+d_s}$ response, with $d_s$ the stable dimension of the map. An understanding of higher-order response for piecewise hyperbolic maps would be a step towards understanding the non-uniformly hyperbolic situation.

While we prove results only for the Lozi map we expect our results can be extended to more general piecewise hyperbolic maps: for nonlinear maps there is one extra, smooth, term which standard transfer operator results will show to decay exponentially. Our approach may also shed light on higher-order response for Sinai billiards, which possess similar piecewise-singularities in the dynamics for orbits that narrowly miss scatterers \cite{Chernov14}.
\\

Our paper is structured as follows: in Section \ref{s:Setup} we describe the Lozi map as much as necessary to state our main theoretical and numerical results in Section \ref{s:Results}. In Sections \ref{s:Sobolev} we introduce the existing decay of correlations result, and Section \ref{s:Disintegration} we describe various disintegrations of the SRB measure; in Section \ref{s:Decomposition} we decompose the susceptibility function into several components which we then pin down the behaviour of in the following Sections \ref{s:DecayX}--\ref{s:DecayLRho}.

\section{Piecewise hyperbolicity of the Lozi map}\label{s:Setup}
	
For $b\in(0,\min\{a-1,4-2a\})$ the Lozi map \eqref{eq:LoziDef} has at least one strange attractor $\Lambda$ (see Figure~\ref{fig:Cones}) contained in a strictly invariant bounded open set \cite{Misiurewicz80, Young85}, where $p_0 = (2/(2+a-\sqrt{a^2+4b}),0)$. We can choose a strictly invariant closed subset $\M$ of the open set such that $\Lambda \subset \inte\M$ is the unique attractor in $\M$, and note that we can draw $\M$ such that it is convex and its edges avoid the stable cone (defined below).


The Lozi map is piecewise affine. Let the domain of the two pieces of the map be given respectively by
\[ \M_{\pm} = (\R^{\pm} \times \R) \cap \inte\M. \]

The boundary between these two pieces is the critical line
\begin{equation}
\ell_{\S} = \{0\} \times \R.
\end{equation}
For $x \in \ell_{\S}$ we notate $x_\pm$ as $x$ considered respectively as being in $\M_\pm$.

The singular set of $\M$ is $\S := \ell_{\S} \cup \partial\M$, but the boundary component is neither interesting nor problematic to us, as it does not intersect with any attractors of $f$.


Let $c_{\rm cone}$ be less than $\frac{a+\sqrt{a^2-4b}}{2b}$ by some sufficiently small amount. For $a \geq 1+b$ there exist piecewise-$C^2$ invariant cones 
\begin{equation} \C^u(x,y) = \{ (\xi,\eta) \in \R^2 : \xi \geq c_{\rm cone} |\eta| \} \label{eq:UnstableCone} \end{equation} %
and
\begin{equation}\label{eq:StableCone}
\C^s(x,y) = Df^{-1}_{(x,y)} \{ (\xi,\eta) \in \R^2 : \eta \geq c_{\rm cone}  |\xi| \}.
\end{equation}
These cones are plotted in Figure~\ref{fig:Cones}.

\begin{figure}
	\centering
	\includegraphics{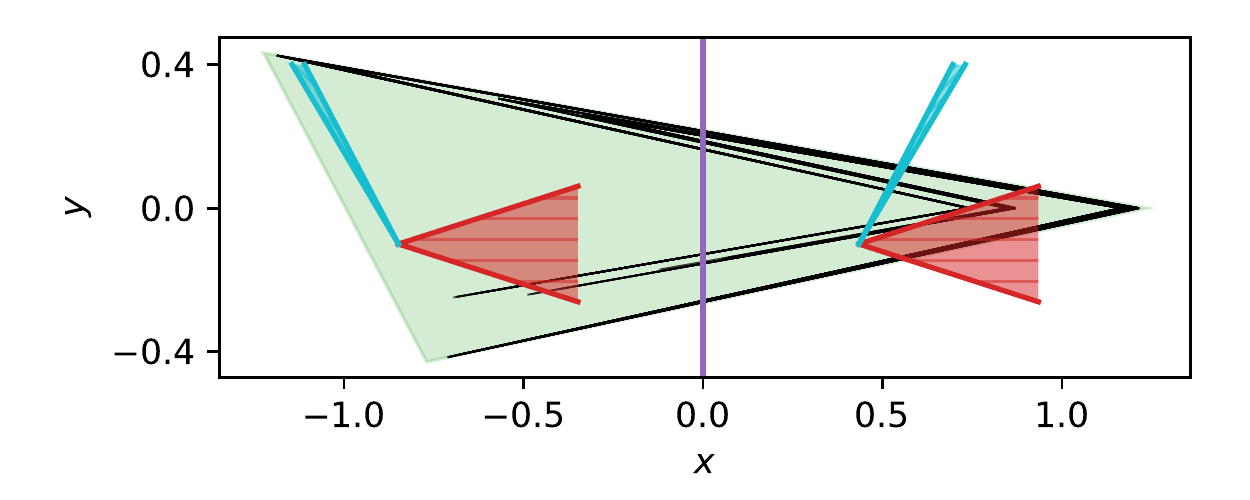}
	\caption{Picture of the Lozi attractor at $a=1.8, b=0.35$ (black) with an absorbing set $\M$ (green), the singular line $\ell_{\S}$ (purple), and the unstable (red, horizontal hatch) and stable (blue, diagonal hatch) cones on either side of the singular line.}
	\label{fig:Cones}
\end{figure}
	
These cones satisfy uniform expansion and contraction conditions. In the unstable direction, $Df_p \C^u(p) \subset \inte \C^u(f(p)) \cup \{0\}$ for all $p \in \M$, and there exists $\lambda > 1$ such that
\[ \|Df_p v\| \geq \lambda \|v\| \]
for all $v \in \C^u(p),\, p \in \M$. In the stable direction, $Df_{f(p)}^{-1} \C^s(f(p)) \subset \inte \C_S(p) \cup \{0\}$ for all $p \in \M$, and there exists $\mu < 1$ such that
\[ \|Df_{f(p)}^{-1} v\| \geq \mu^{-1} \|v\| \]
for all $v \in \C^s(f(p)),\, p \in \M$.
The tangent vectors to the singular line $\ell_{\S}$ are transverse to all stable cones, and as mentioned before it is possible to construct $\M$ so that the pieces of the boundary $\partial\M$ also are also all transverse to stable cones.

Let $\vu$ and $l^u$ be the unstable vector bundle (resp. covector bundle) of $f$, defined such that $\|\vu(x)\| \equiv l^u(x) \vu(x) \equiv 1$, and $\vu(x) \cdot e_1 >0$ (i.e. $\vu$ is always pointing to the right, and therefore always in $\C^u$). 
Similarly, let $\vs(x)$ and $l^s(x)$ be the stable vector bundle and covector bundle respectively such that $\| \vs(x) \| = l^s(x) \vs(x) = 1$, and $\vs(x) \cdot e_2> 0$ (i.e. $\vs$ is always in the stable cone $\C^s$). 

Furthermore let us define the pointwise rate of expansion along unstable manifolds 
\begin{equation}\lambda_n(x) := l^u(f^n(x)) D_xf^n \vu(x)\label{eq:LambdaNDefinition}\end{equation}
and the pointwise rate of contraction in the stable direction (in backwards time)
\[\mu_n(x) := l^s(x) D_{f^{-n}(x)}f^{n} \vs(f^{-n}(x)) = \det (D_{f^{-n}(x)}f^n) / \lambda_{n}(f^{-n}(x)).\] 
We have $| \lambda_{n}(x) | \geq \lambda^n > 1$ and $| \mu_n(x) | \leq \mu^n < 1$.


\section{Main results}\label{s:Results}


%

In the first instance we prove that one can define something akin to a conditional measure of the SRB measure $\rho$ along {\it any} leaf of an appropriate foliation. This is necessary, because we are interested in a specific leaf (the singular line), and abstract results only give existence for almost all leaves. By contrast, the proof of this theorem will rely on the SRB measure's manifold structure.

\begin{theorem}\label{t:SliceMeasures}
	For each $x \in \R$ there exists a positive Borel measure $\rho_{\slice x}$ with support on $\ell_x := \{x\} \times \R$ such that:
	\begin{enumerate}[a.]
		\item \label{res:finite} $\sup_{x \in \R} \rho_{\slice x}(\ell_x) < \infty$;
		\item \label{res:slice} For all $x \in \R$ and all $A: \R^2 \to \R$ bounded and continuous,
		\begin{equation} \int_{\R^2} A\,\dd\rho_{\slice x} = \lim_{\delta \to 0} \frac{1}{2\delta} \int_{(x-\delta,x+\delta) \times \R} A\,\dd\rho. \label{eq:SliceMeasureProperty}\end{equation}
		\item \label{res:disintegration} For all measurable $E \subseteq \R^2$,
		\begin{equation*}
		\rho(E) = \int_{\R} \rho_{\slice x}(\ell_x\cap E)\,\dd x. 
		\end{equation*} 
	\end{enumerate}
	Furthermore, \eqref{eq:SliceMeasureProperty} specifies $\rho_{\slice x}$ uniquely.
\end{theorem}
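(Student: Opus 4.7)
The plan is to build $\rho_{\slice x}$ directly from the unstable disintegration of the SRB measure and then verify the three stated properties in turn. The construction of the SRB measure for the Lozi map \cite{Young85, Misiurewicz80} provides a transverse measure $\nu$ and, for $\nu$-a.e.\ local unstable leaf $W \in \Wul$, a uniformly bounded conditional density $\psi_W$ on $W$ with respect to arclength $\ell_W$, such that for all $A \in C_b(\R^2)$,
\[ \int A\,\dd\rho = \int \int_W A\, \psi_W\,\dd\ell_W\,\dd\nu(W), \]
after gluing the disintegrations on a finite covering of $\M$ by product rectangles. Since each $W$ is tangent to $\C^u$ (see \eqref{eq:UnstableCone}), it is a Lipschitz graph $y = \phi_W(x)$ over an interval $I_W \subset \R$, with $|\phi_W'| \leq 1/c_{\rm cone}$ and arclength element $\sqrt{1+\phi_W'(x)^2}\,\dd x$ in the $x$-coordinate.

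I then define, for every $x \in \R$,
\[ \rho_{\slice x} := \int_{\{W : x \in I_W\}} \psi_W(x,\phi_W(x))\, \sqrt{1+\phi_W'(x)^2}\, \delta_{(x,\phi_W(x))}\,\dd\nu(W), \]
a positive Borel measure supported on $\ell_x$. Property (a) is then immediate from the uniform bounds on $\psi_W$ and $|\phi_W'|$, combined with the fact that the attractor has bounded $x$-projection, so $\nu\{W : x \in I_W\}$ is uniformly bounded in $x$.

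For property (b), Fubini gives, for $A \in C_b(\R^2)$ and $\delta > 0$,
\[ \frac{1}{2\delta}\int_{(x-\delta,x+\delta)\times\R} A\,\dd\rho = \int \frac{1}{2\delta}\int_{I_W \cap (x-\delta,x+\delta)} A(t,\phi_W(t))\,\psi_W(t,\phi_W(t))\,\sqrt{1+\phi_W'(t)^2}\,\dd t\,\dd\nu(W). \]
For $W$ with $x$ in the interior of $I_W$, the inner quotient converges as $\delta \to 0$ to the integrand defining $\rho_{\slice x}$; for $W$ with $x \notin \overline{I_W}$ it vanishes for small $\delta$. Dominated convergence then yields \eqref{eq:SliceMeasureProperty}, \emph{provided} the set $\{W : x \in \partial I_W\}$ has $\nu$-measure zero --- an atomlessness statement of the same flavour as Proposition~\ref{p:CriticalLineSelfIntersectionMeasure}, since leaf endpoints lie on backward iterates of the singular set $\S$. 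Property (c) then follows by integrating \eqref{eq:SliceMeasureProperty} in $x$ and reversing the arclength substitution to recover the original unstable disintegration; uniqueness is immediate because (b) fixes $\int A\,\dd\rho_{\slice x}$ for every $A \in C_b(\R^2)$, which determines the finite positive Borel measure.

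The main obstacle is obtaining (b) for \emph{every} $x \in \R$ rather than just $\nu$-a.e.\ $x$, as a bare disintegration theorem would give. The real work lies in the atomlessness input: one must use the manifold structure of the Lozi SRB measure --- the way boundaries of unstable leaves arise from iterates of the singular set under $f^{-1}$ --- to rule out accumulations of leaf endpoints at a single value of $x$. This is where the proof leaves the realm of abstract measure theory and genuinely exploits the piecewise-affine geometry of the Lozi map.
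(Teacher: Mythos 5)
Your construction is the same in spirit as the paper's (a transverse integral of angle-corrected point masses at $W \cap \ell_x$), but two steps you treat as routine are exactly where the content of the theorem lies. First, part (a) is not ``immediate''. The conditional measures of $\rho$ on unstable leaves are uniform probability measures (Proposition~\ref{p:LUMIsSegment}), so their densities with respect to arclength are of order $|W|^{-1}$, and the leaves of the Lozi attractor are arbitrarily short because they are cut by forward images of $\ell_{\S}$; there is no finite cover by product rectangles on which $\psi_W$ is uniformly bounded while the transverse mass of leaves over a given $x$ stays trivially finite. Whichever normalisation you choose, $\rho_{\slice x}(\ell_x)$ is essentially $\int \#(W\cap\ell_x)\,|W|^{-1}\,\dd\hat\rho(W)$, and the integrability of $|W|^{-1}$ is a genuine assertion. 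The paper obtains the uniform-in-$x$ bound from the boundedness of the density of the marginal $\pi_x^*\rho$ (a lemma of Young) together with monotone convergence, and proves finiteness of the measure $\hat\sigma$ (the transverse measure weighted by $|I|^{-1}$) separately by a dynamical argument (Lemma~\ref{l:SigmaForwardMap} and Proposition~\ref{p:SigmaFinite}). The same integrable dominating function $C\|A\|_{L^\infty}/|W|$ is what your dominated-convergence step in (b) requires, so that step silently relies on this finiteness as well.

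Second, your definition of $\rho_{\slice x}$ omits the leaves whose endpoints lie over $x$, and you then need $\nu\{W : x \in \partial I_W\} = 0$ for \emph{every} $x$ to get (b). The paper does not prove this and does not need to: in \eqref{eq:RhoMidDef} the endpoint contributions are included with weight $\tfrac12$, which is exactly the leafwise limit of $\frac{1}{2\delta}\Leb\bigl([x_{p},x_{q}]\cap(x-\delta,x+\delta)\bigr)/|x_{q}-x_{p}|$ when $x$ sits at an endpoint of the projected leaf. With that weighting the pointwise limit in (b) holds for every single leaf, so no null-set hypothesis is required. The atomlessness statement you invoke (Proposition~\ref{p:CriticalLineSelfIntersectionMeasure}) concerns intersections of $\ell_{\S}$ with its own forward orbit and is used only later, in Lemma~\ref{l:CondMeasSigma}, to show that the endpoint term vanishes for the particular slice $\condmeas = \rho_{\slice 0}$; for a general vertical line $\ell_x$ the corresponding statement is not established anywhere in the paper, so deferring to it leaves a real gap. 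Either prove that statement for all $x$, or adopt the $\tfrac12$-weighted definition.
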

Note that these measures are not themselves probability measures in general, but are scalar multiples of the conditional measures along the $\ell_x$. To disambiguate therefore, we call these measures $\rho_{\slice x}$ ``slice measures''.
This measures $\rho_{\slice x}$ are supported on Cantor sets $\Lambda \cap \ell_x$, which we reasonably expect to be of Hausdorff dimension strictly between zero and one. 

Our interest is in the slice measure on the singular set $\condmeas := \rho_0$. In Figure~\ref{fig:CondMeasure}, a histogram of $\condmeas$ for the Lozi map at standard parameters $a = 1.7, b= 0.5$ is plotted.

\begin{figure}
	\centering
	\includegraphics{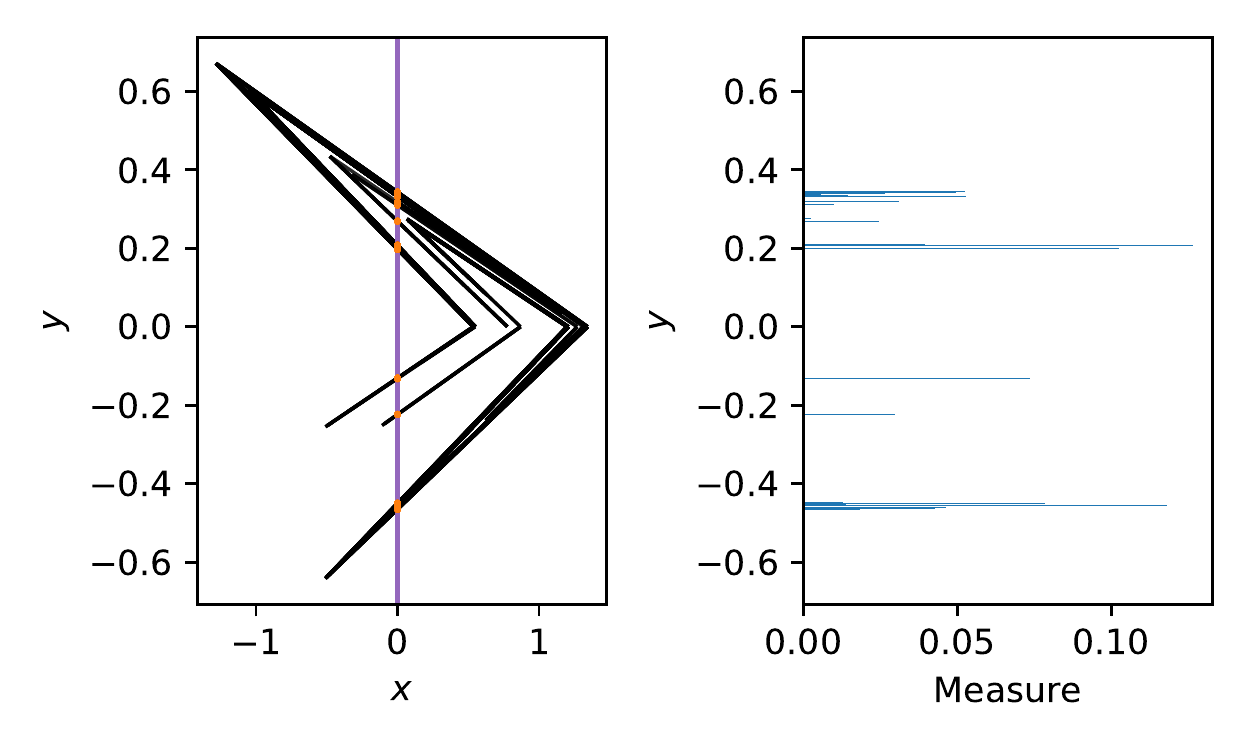}
	\caption{Left: picture of the Lozi attractor at $a=1.7, b=0.5$ (black), the singular line $\ell_{\S}$ (purple), their intersection (orange). Right: histogram of $\condmeas$ on $\ell_{\S}$, obtained from $200,\!000$ iterates of the unstable manifold dynamics $\vec f$, binned at width $0.0025$. Figure reprinted from \cite{mix}.}
	\label{fig:CondMeasure}
\end{figure}

We will make the following ``conditional mixing'' conjecture, 
which is analogous to \cite[Conjecture~4.1
]{mix} and numerically supported in \cite{mix}:
\begin{conjecture}\label{c:Full}
	There exists a Banach space $\B$ with the following properties:
	\begin{itemize}
		\item $C^1(\M) \subseteq \mathcal{B}\subseteq L^\infty(\M)$;
		\item For any $Y \in C^2$ the functions $l^s Y,\, l^u Y \in \B$;
		\item There exist $c, \theta \in (0,1)$ and $C > 0$ such that for all $A,B, \Gamma$ and $m, n \geq 0$,
	\begin{equation} \left|
	    \condmeas\left((A \circ f^{n+m})\, (B \circ f^m)\, \Gamma\, \lambda_m^{-1} \right) 
		- \rho(A) \condmeas(B \circ f^m \Gamma \lambda_m^{-1}) \right| \leq C \|A\|_{C^\Aord} \|B\|_{\mathcal{B}} \| \Gamma \|_{BV(\S)} c^{n} \theta^m \label{eq:Conjecture1} \end{equation}
	and 
	\begin{equation} \left|
		\condmeas\left( (A \circ f^{n})\, (B \circ f^{-m})\, \Gamma\, \mu_m\right)
		- \rho(A) \condmeas(B \circ f^{-m}\, \Gamma\, \mu_m) \right| \leq C \|A\|_{C^\Aord} \|B\|_{\mathcal{B}} \| \Gamma \|_{BV(\S)} c^n \theta^m,  \label{eq:Conjecture2} \end{equation}
	where $\mu(\psi) := \int_{\R^2} \psi\,\dd\mu$ for measures $\mu$ and functions $\psi$.
	\end{itemize}
\end{conjecture}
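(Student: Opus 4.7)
\emph{Pushforward formulation.} For \eqref{eq:Conjecture1} I would first reinterpret the LHS as an integral against a pushed-forward signed measure: setting $\nu_m := f^m_\ast(\Gamma\, \lambda_m^{-1}\, \condmeas)$, the change-of-variables formula gives
\begin{equation*}
\condmeas\bigl((A\circ f^{n+m})(B\circ f^m)\,\Gamma\,\lambda_m^{-1}\bigr) \;=\; \int (A\circ f^n)\, B\, \dd\nu_m,
\end{equation*}
so the claim becomes an approximate product formula $\int (A\circ f^n)\,B\,\dd\nu_m \approx \rho(A)\,\nu_m(B)$ with error bounded by $C c^n \theta^m \|A\|_{C^1}\|B\|_{\B}\|\Gamma\|_{BV(\S)}$. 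The weighting by $\lambda_m^{-1}$ is exactly the unstable Jacobian, so $\nu_m$ should asymptotically look absolutely continuous along unstable leaves: $\ell_\S$ is uniformly transverse to the stable cone, $f^m(\ell_\S)$ is a union of admissible unstable curves, and $\Gamma\lambda_m^{-1}$ becomes a BV density along those curves, uniformly in $m$. The analogous rewriting for \eqref{eq:Conjecture2}, via $\tilde\nu_m := f^{-m}_\ast(\mu_m\, \Gamma\, \condmeas)$, produces a measure concentrated on short pieces of stable manifolds and plays a symmetric role.

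\emph{Spectral gap for the $c^n$ factor.} The next step is to place $\nu_m$ (and $B\nu_m$) in an anisotropic Banach space $\mathcal{H}$ on which the transfer operator of $f$ has a spectral gap with leading eigenvector $\rho$---natural candidates are the spaces of Demers and Liverani \cite{DemersLiverani08} or of Baladi and Gou\"ezel \cite{BaladiGouezel10} already constructed for piecewise hyperbolic maps of Lozi type. The Banach space $\B$ of admissible $B$'s would be chosen so that the multiplication $\nu_m \mapsto B\nu_m$ maps into (a weak version of) $\mathcal{H}$ with norm bounded by $\|B\|_\B$; the stated requirements $C^1 \subset \B \subset L^\infty$ and $l^s Y,\,l^u Y \in \B$ for $Y\in C^2$ are compatible with this, because multiplication by smooth or cone-conormal functions preserves Demers--Liverani-type spaces. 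Once this machinery is in place, exponential contraction of the transfer operator on the subspace transverse to $\rho$ delivers the $c^n$ factor, at a cost of a uniform-in-$m$ norm bound on $\nu_m$.

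\emph{The $\theta^m$ decay, and the main obstacle.} The critical difficulty is the extra factor $\theta^m$: nothing in the standard spectral-gap argument produces decay in the pushforward parameter $m$. It must come from convergence of $\nu_m$, renormalized by its $O(\lambda^{-m})$ mass, toward a multiple of $\rho$ in a norm dual to $C^1$. This is precisely the conditional mixing of \cite{mix} transported to the iterates: fragments of $\ell_\S$ are repeatedly cut by preimages of the singular line and redistributed across the attractor, and the conjecture asserts that this redistribution equilibrates geometrically. Establishing such a rate for the singular line appears to require either a coupling construction tailored to the slice measure, a spectral analysis of a Perron--Frobenius-like operator acting on densities transverse to the unstable foliation, or a self-consistent bootstrap linking the estimates at scales $m$ and $m-1$ via \eqref{eq:Conjecture1}--\eqref{eq:Conjecture2} themselves. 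Any of these routes seems to demand new ideas beyond the present anisotropic-Banach-space toolkit; this is exactly why the statement is conjectural, and I expect essentially all of the difficulty to concentrate at this step.
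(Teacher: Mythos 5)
This statement is Conjecture~\ref{c:Full}: the paper deliberately leaves it unproven, offering only a heuristic (``if $\condmeas$ lay, as $\rho$ does, in the Banach space on which the transfer operator has a spectral gap, we would expect this result to hold'') and the numerical evidence of \cite{mix}. There is therefore no proof in the paper to compare against, and your proposal, which candidly stops short of a proof, is the honest response. Your overall strategy --- rewrite the left-hand side as $\int (A\circ f^n)\,B\,\dd\nu_m$ with $\nu_m = f^m_*(\Gamma\,\lambda_m^{-1}\condmeas)$, then try to place $B\nu_m$ in an anisotropic space of Demers--Liverani or Baladi--Gou\"ezel type where the spectral gap yields the $c^n$ factor --- is exactly the heuristic the paper itself gestures at, and your identification of $\B$ as a multiplier class compatible with $l^sY, l^uY$ matches the role it plays in Proposition~\ref{p:KappaLRhoDecay}.

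One point where I would redirect your emphasis: you present the $\theta^m$ factor as the critical obstacle and the $c^n$ factor as coming from ``standard machinery.'' The paper's framing is closer to the opposite. Both terms on the left of \eqref{eq:Conjecture1} already carry the weight $\lambda_m^{-1} = O(\lambda^{-m})$ (and $\mu_m = O(\mu^m)$ in \eqref{eq:Conjecture2}), so exponential smallness in $m$ is essentially built into the integrand; the paper says the $\theta^m$ is ``expected given the presences of $\lambda_m^{-1},\mu_m$,'' modulo the complexity count of Lemma~\ref{l:TopoPressure}. The genuine obstruction is that $\condmeas$ is a singular measure carried by a Cantor set on a single line, and it is not known to belong to $\Ht$ or any space on which the transfer operator has a gap --- Theorem~\ref{t:SliceMeasures} only constructs it as a weak limit of thickened slices. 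Without that membership, the ``uniform-in-$m$ norm bound on $\nu_m$'' you invoke for the $c^n$ step is exactly the missing ingredient, so the difficulty concentrates there rather than in an additional equidistribution rate. Your three suggested routes (coupling adapted to the slice measure, a transverse Perron--Frobenius analysis, or a bootstrap) are reasonable, but the first question any of them must answer is in what function space a conditional measure on a fixed, dynamically non-generic line can be made to interact with the transfer operator.
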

In other words, $A$ and the rest of the integral decorrelate as $n \to \infty$ in a way that is uniformly bounded (and in fact decaying exponentially in $m$, as expected given the presences of $\lambda_m^{-1}, \mu_m$). If $\condmeas$ lay, as $\rho$ does, in the Banach space defined in Section \ref{s:Sobolev} on which the transfer operator has a spectral gap, we would expect this result to hold.

Note that by setting $B=\Gamma=1$, $m=0$, we recover that for all $A\in C^\Aord$, $n\in\N$,
\begin{equation} \left|\int_{\ell_{\S}} (A \circ f^{n})\, \dd \condmeas - \rho(A) \condmeas(1) \right| \leq C \|A\|_{C^\Aord} c^{n}. \label{eq:ReducedConjecture} \end{equation}
for some $C$, $c\in(0,1)$, as expected in \cite[Conjecture~4.1
]{mix}.
\\
Now, formally we expect a Lozi map $f$ to obtain a linear response to perturbations given by \eqref{eq:Perturbation} if the susceptibility coefficients
\begin{equation}
\kappa_n := \int_\M \nabla (A\circ f^n) \cdot X\, \dd\rho \label{eq:Susceptibility}
\end{equation}
are summable. Our main theorem is that Conjecture~\ref{c:Full} delivers this to us:
\begin{theorem}\label{t:Formal}
	Under Conjecture~\ref{c:Full}, there exist $C$, $c<1$ such that for all $A \in C^\Aord$, $X \in C^2$,
	\begin{equation}
		\left| \kappa_n \right| \leq C c^n \| A \|_{C^\Aord} \| X \|_{C^2}
	\end{equation}
	for some $C > 0$, $c \in (0,1)$.
\end{theorem}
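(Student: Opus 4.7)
Following Ruelle's template, I decompose $X = X^u \vu + X^s \vs$ with $X^u = l^u \cdot X$ and $X^s = l^s \cdot X$, so that $\kappa_n = \kappa_n^u + \kappa_n^s$. The stable piece is handled without Conjecture~\ref{c:Full}: $Df$-invariance of the stable bundle gives $Df^n\vs(x) = \mu_n(f^n x)\,\vs(f^n x)$, hence $\partial_{\vs}(A\circ f^n)(x) = \mu_n(f^n x)\,\partial_{\vs}A(f^n x)$ and $|\kappa_n^s| \leq C\mu^n\|A\|_{C^\Aord}\|X\|_{C^0}$. For the unstable piece $\kappa_n^u = \int_\M X^u\,\partial_{\vu}(A\circ f^n)\,\dd\rho$, the analogous identity $\partial_{\vu}(A\circ f^n) = \lambda_n\,\partial_{\vu}A\circ f^n$ produces exponential growth $\lambda_n \geq \lambda^n$, so integration by parts along unstable leaves is required to cancel it. Replacing $A$ by $A - \rho(A)$ leaves $\kappa_n$ unchanged, so I may also assume $\rho(A) = 0$ throughout.

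Using the SRB disintegration $\rho = \int_\Xi \psi^\xi\,\dd\Leb_\xi\,\dd\nu(\xi)$ along the unstable foliation and integrating by parts on each leaf $\xi$,
\[
\kappa_n^u = -\int_\Xi\int_\xi (A\circ f^n)\,\partial_{\vu}(X^u\psi^\xi)\,\dd\Leb_\xi\,\dd\nu,
\]
the boundary terms vanishing thanks to the transversality of $\partial\M$ to stable cones. The density $\psi^\xi$ is piecewise smooth on each leaf, with jumps located exactly on $\xi\cap\bigcup_{k\geq 0}f^k(\ell_\S)$---images of the singular line at which the geometric construction of $\rho$ develops cusps. I split $\partial_{\vu}(X^u\psi^\xi)$ into a bounded regular part plus a sum of Dirac masses on the $f^k(\ell_\S)$: the regular part reassembles across the $\nu$-integration into $\int (A\circ f^n)\,g\,\dd\rho$ for some $g \in \B$, which decays like $c^n$ by the standard decay-of-correlations result of Section~\ref{s:Sobolev} (using $\rho(A) = 0$).

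The singular piece is the heart of the proof. Change variables via $f^{-k}$ to pull the $k$-th atom back to $\ell_\S$, and identify the resulting transverse measure with $\condmeas$ through Theorem~\ref{t:SliceMeasures}; the Jacobian of this pullback along the image line contributes exactly the factor $\lambda_k^{-1}$. This yields contributions of the schematic form
\[
\sum_{k\geq 0}\int_{\ell_\S}(A\circ f^{n+k})\,B_k\,\Gamma_k\,\lambda_k^{-1}\,\dd\condmeas,
\]
where $B_k$ is built from $X^u\circ f^k$ and $\Gamma_k$ encodes the jump of $\psi^\xi$ across $f^k(\ell_\S)$. Conjecture~\ref{c:Full} (applied with $m = k$ in~\eqref{eq:Conjecture1}, and $\rho(A) = 0$) bounds each summand by $C\|A\|_{C^\Aord}\|X\|_{C^2}\,c^n\theta^k\,\|B_k\|_\B\|\Gamma_k\|_{BV(\S)}$, giving $|\kappa_n^u| \leq C'c^n$ provided the product $\|B_k\|_\B\|\Gamma_k\|_{BV(\S)}$ is summable against $\theta^k$. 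The backward form~\eqref{eq:Conjecture2}, with its $\mu_m$ weight, is expected to absorb an analogous stream of atomic contributions arising when one wishes to factor the stable-type covector $l^u$ out of $X^u$ and push it back by the conjugate backward cocycle.

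The principal obstacle is the singular analysis: rigorously identifying the jump weights $\Gamma_k$ and pullbacks $B_k$ as lying in the regularity classes $BV(\S)$ and $\B$ prescribed by Conjecture~\ref{c:Full}, with $k$-norms summable against $\theta^k$. Reconciling the vertical slice measures of Theorem~\ref{t:SliceMeasures} with the curved unstable foliation underlying the SRB disintegration requires a further transverse-foliation change of variables, which seems to be the structural reason why both weight factors $\lambda_m^{-1}$ and $\mu_m$ are built into Conjecture~\ref{c:Full} rather than just one.
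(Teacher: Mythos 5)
Your skeleton coincides with the paper's: split $X$ into stable and unstable components, dispose of the stable part by contraction, integrate the unstable part by parts along unstable leaves, and sort the result into a smooth correlation term (handled by the decay-of-correlations machinery) plus streams of atomic contributions supported on iterates of $\ell_\S$, pulled back to the singular line with Jacobian $\lambda_k^{-1}$ (resp. pushed with $\mu_m$) and fed into Conjecture~\ref{c:Full}. Your observation that one may normalise $\rho(A)=0$ plays the role of the paper's Proposition~\ref{p:KappaInfinity}. However, as written the proposal has two genuine gaps, the second of which you flag yourself. First, you assert that the regular part reassembles into $\int (A\circ f^n)\,g\,\dd\rho$ with $g\in\B$ "by the standard decay-of-correlations result", but the relevant multiplier is $g=-l^u\,DX\,\vu$, and showing that multiplication by $l^u W \vu$ is bounded on the space $\Ht$ of Proposition~\ref{p:DecayOfCorrelations} is a substantial piece of work (the paper's Lemma~\ref{l:UnstableDivXSobolevSpace}): $l^u$ and $\vu$ are only piecewise continuous, with discontinuities on backward and forward images of $\ell_\S$ respectively, and one needs the telescoping approximations $\hat l\bg,\vv\bh$ constant on cylinders, the convexity of cylinders (Lemma~\ref{l:CylinderMultipliers}), and the complexity bound of Lemma~\ref{l:TopoPressure} to sum the resulting double series. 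This cannot be waved through.

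Second, the "principal obstacle" you name — identifying $B_k$ and $\Gamma_k$ and proving summability of $\|B_k\|_\B\|\Gamma_k\|_{BV(\S)}$ against $\theta^k$ — is precisely what the paper resolves in Propositions~\ref{p:KappaRhoSum} and~\ref{p:KappaLSum}, and the resolution shows your worry is structured slightly differently than you expect: $B$ and $\Gamma$ can be taken \emph{independent of $k$} ($B=l^uX$, $\Gamma=1$ for the forward stream; $B=l^sX$, $\Gamma=-l^u(x_-)\vs(x_+)$ for the backward one), with all $k$-dependence absorbed into the weights $\lambda_m^{-1}$, $\mu_m$ that are already built into \eqref{eq:Conjecture1}--\eqref{eq:Conjecture2}. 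Relatedly, your picture of a leafwise density $\psi^\xi$ with jumps on $\bigcup_k f^k(\ell_\S)$ is not quite the Lozi geometry: by Proposition~\ref{p:LUMIsSegment} the conditional density is \emph{uniform} on each local unstable segment, and the two atomic streams arise instead from (i) the endpoints of the segments, i.e. the boundary terms of the integration by parts ($\kappa^\rho_n$, forward images of $\ell_\S$, weight $\lambda_m^{-1}$), and (ii) the jumps of the covector field $l^u$ in the Stieltjes derivative $\dd(l^uX)$ ($\kappa^l_n$, backward images of $\ell_\S$, weight $\mu_m$). The second stream — which requires Proposition~\ref{p:lLeafTotalVariation} to make sense of $\int\dd l^u$ and Lemma~\ref{l:StieltjesdlStable} to see that it pairs only with the stable projection of the integrand — is only gestured at in your last sentence, yet it constitutes half of the singular analysis and is the reason the conjecture needs its backward form.
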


\begin{figure}[htb]
	\centering
	\includegraphics{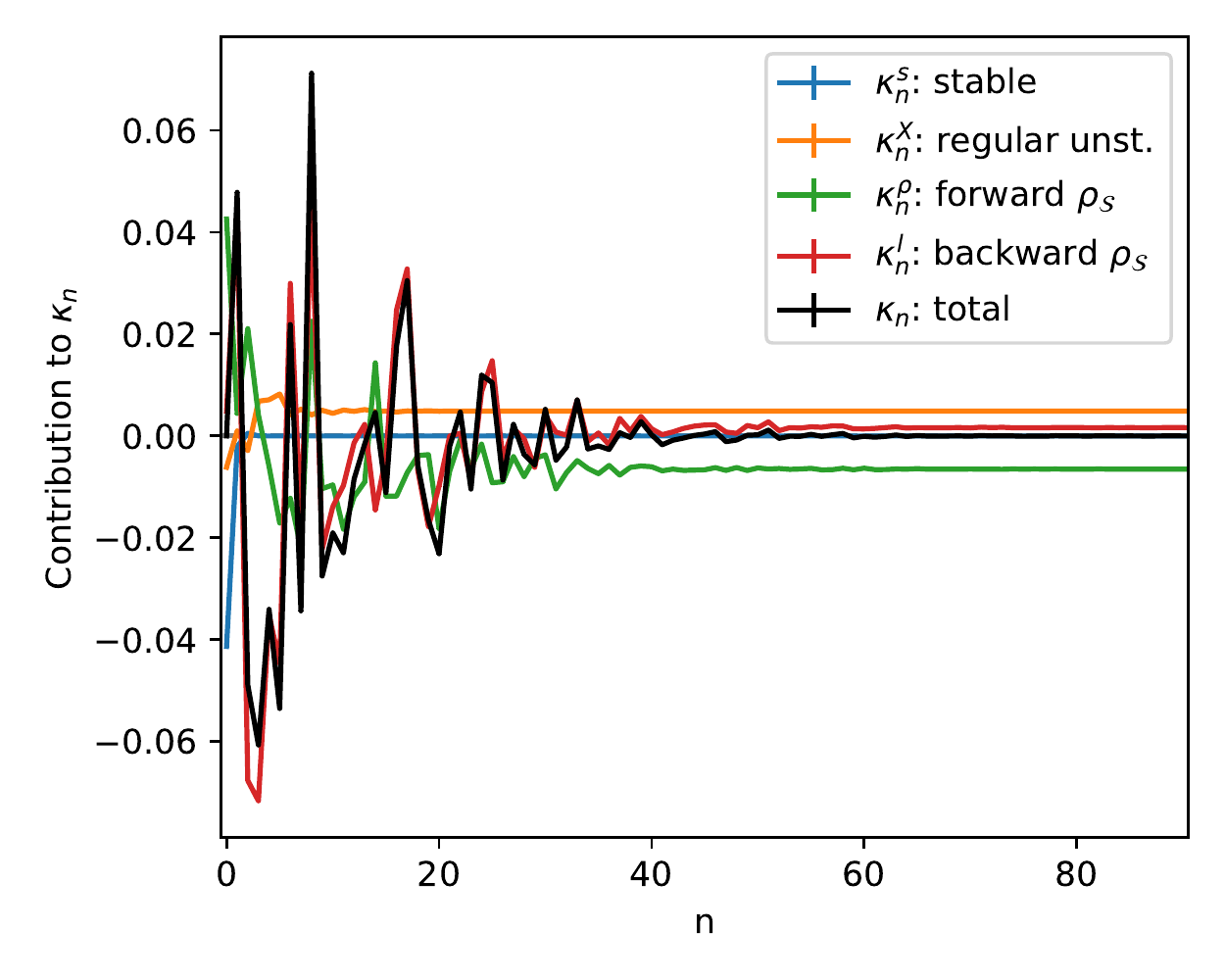}
	\caption{For the parameters of the Lozi map $a = 1.8$ and $b = 0.35$, the susceptibility function for the perturbation $X(x,y) = (0,y)$, and its components as given in Proposition~\ref{p:Decomposition}, using methods developed in \cite{mix}. Quadruple precision (106-bit mantissa) floating-point was used, with $100$ runs of length $400,000$. Error bars (too small to see) quantify the sampling error \cite{Gottwald16}.
	}
	\label{fig:Susceptibility}
\end{figure}

We might ask ourselves whether the existence of a formal linear response for the Lozi map can be supported numerically. To this end, we computed a Monte Carlo estimate of the susceptibility function for a Lozi map using the numerical implementation of the segment dynamics developed in \cite{mix} (our proof of Theorem~\ref{t:Formal}) will rely on similar notions) and the expressions in Proposition~\ref{p:Decomposition}. This estimate is plotted in Figure~\ref{fig:Susceptibility}. It can be seen that all terms in the susceptibility function exhibit exponential decay to their respective limits, which sum up to zero, as required.

We also present numerical evidence that this {\it formal} linear response actually translates to differentiability of the true response. In Figure~\ref{fig:Response} we plot the true and (formal) linear response for a family of Lozi maps $f_\varepsilon$ with parameters $(a,b) = (1.8,0.35(1+\varepsilon))$; in Figure~\ref{fig:ResponseDeviation} we plot the difference between these on a small parameter range. The formal linear response we obtain clearly appears to correspond to the true derivative of the response.

One might go a step further and ask how smooth the true response should in fact be. When studying systems with homoclinic tangencies, we suggested that Ruelle's argument could extend to saying that the response should be approximately $C^{1/2 + d_s}$ where $d_s$ is the stable dimension (i.e. the dimension of the attractor restricted to a stable manifold): we can further analogise this to say that piecewise hyperbolic systems should have an approximately $C^{1+d_s}$ response. We plot in green a possible Taylor approximation error based on this level of smoothness in Figures \ref{fig:Response}--\ref{fig:ResponseDeviation}: the error appears to be of the correct magnitude.
\begin{figure}
	\centering
	\includegraphics{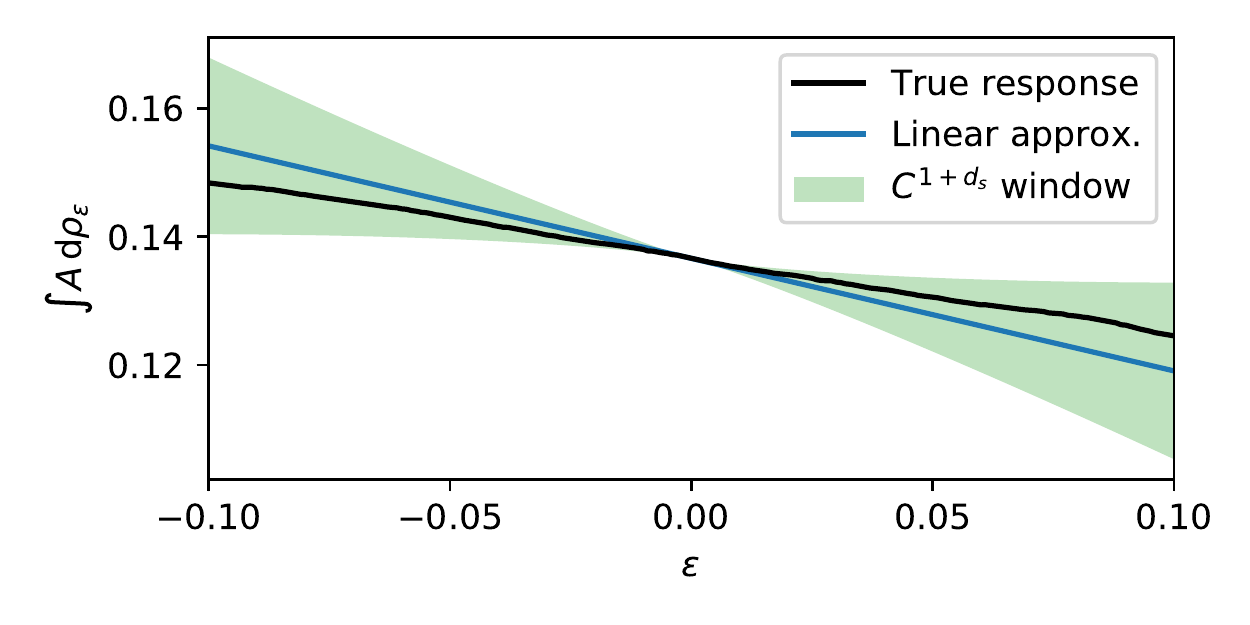}
	\caption{The true and linear response for the same parameters as in Figure~\ref{fig:Susceptibility}. The true response was obtained via Monte Carlo sampling; error bars (too small to see) quantify the sampling error. An error bound between the two of $0.15 |\varepsilon|^{1+d_s}$ is plotted in green, where we estimate $d_s = 0.26$ using the Kaplan-Yorke dimension.
	}
	\label{fig:Response}
\end{figure}

\begin{figure}
	\centering
	\includegraphics{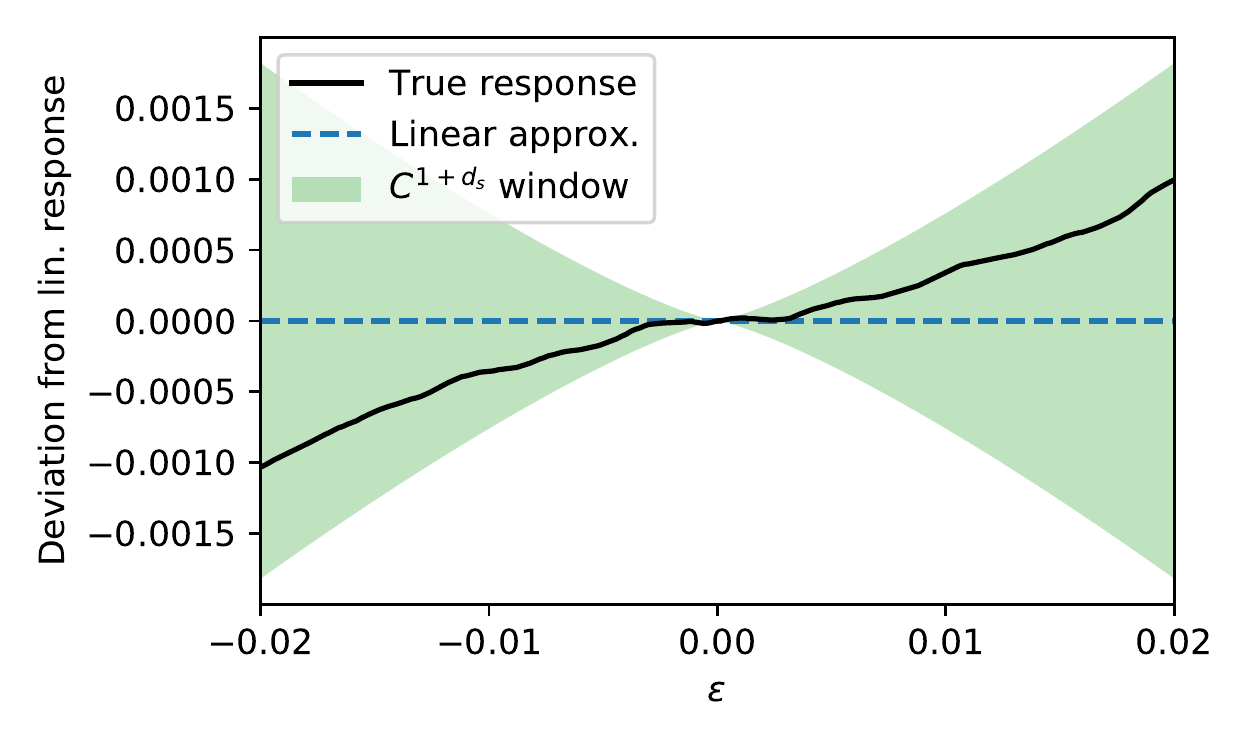}
	\caption{The difference between true and linear response for the same parameters as in Figure~\ref{fig:Susceptibility}. 
	}
	\label{fig:ResponseDeviation}
\end{figure}

Analogising to systems with stable-unstable tangencies, this corroborates the claim that ``generic smooth chaotic systems have $C^{d_s + 1/2}$ response'' \cite{Ruelle18}. Indeed, in the smooth case, exponential conditional mixing (perhaps in a more general, e.g. fractionally differentiated, form) is the mechanism required to obtain this.
\\

The remainder of this paper will be largely be tasked with proving Theorem~\ref{t:Formal}. Let us sketch the idea of the proof. Because the Lozi map is uniformly hyperbolic, the perturbing vector field $X$ can be split into stable and unstable parts. The susceptibility coefficients $\kappa_n$ can therefore too be split into stable and unstable parts (Proposition~\ref{p:Decomposition}). It is standard that the stable part $\kappa^s_n$ is exponentially decaying (Proposition~\ref{p:KappaS}); as with smooth hyperbolic systems the unstable part contains a correlation term $\kappa^X_n$ between $A$ and a smooth function, which is therefore exponentially decaying (Proposition~\ref{p:KappaX}). However, there are two extra terms: one, $\kappa^\rho_n$, arises from discontinuities in the unstable vector field (Proposition~\ref{p:KappaRhoSum}), and the other, $\kappa^l_n$, arises from discontinuities in the stable vector field (i.e. unstable covector field, Proposition~\ref{p:KappaLSum}). These discontinuities are generated by the discontinuity of the map's derivative and so lie in the forward (respectively backward) orbit of the singularity set, hence the application of Conjecture~\ref{c:Full}.

\section{Decay of correlations for regular distributions}\label{s:Sobolev}

To prove our results, we will need to introduce some theoretical tools. In this section, we present the current exponential decay of correlations theory.

As an application of \cite[Theorem~2.5]{BaladiGouezel10} we have exponential decay of correlations for the Lozi map in a certain Banach space:
\begin{proposition}\label{p:DecayOfCorrelations}
	 There exists a Banach space of distributions $(C^1)^* \supset \Ht \supset C^1$, given in \cite[Theorem~2.5]{BaladiGouezel10} and $C>0$, $\xi \in (0,1)$ such that for all functions $A \in C^1$ and all finite signed Borel measures $\phi \in \Ht$, we have
	\[ \left|\ \int_\M A\circ f^n\, \dd\phi - \int_\M A\, \dd \rho \int_\M \dd\phi\ \right| \leq C \|A\|_{C^1} \|\phi\|_{\Ht} \xi^n. \]
\end{proposition}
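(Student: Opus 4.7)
The plan is to deduce the claim directly from the machinery of \cite{BaladiGouezel10}, whose Theorem~2.5 gives quasi-compactness of the transfer operator on an anisotropic Banach space $\H$ of distributions, with $C^1 \subseteq \H \subseteq (C^1)^*$, for piecewise hyperbolic maps satisfying a list of geometric hypotheses (uniform expansion/contraction in invariant cones, transversality of singularities to the stable cone, bounded complexity of the singularity set, and a distortion/boundedness hypothesis on the Jacobian). My first step would be to verify all these hypotheses for the Lozi map under the standing parameter range $b \in (0,\min\{a-\sqrt{2},4-2a\})$, $a \geq 0$. The cone conditions and uniform expansion/contraction rates $\lambda,\mu$ are the ones recorded just above in Section~\ref{s:Setup}; the transversality of $\ell_\S$ and of $\partial \M$ to $\C^s$ is also explicitly built in, and bounded complexity is trivial because the singularity set is a single line together with a finite polygonal boundary. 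The Jacobian is locally constant (equal to $\pm b$) on each piece, so the distortion hypothesis is automatic.

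Given these verifications, \cite[Theorem~2.5]{BaladiGouezel10} produces a norm on $\H$ with respect to which the transfer operator $\L:\H \to \H$ of $f$ is bounded, quasi-compact, and whose essential spectral radius is strictly less than $1$. Combined with topological mixing of $f$ on $\Lambda$ (known for the Lozi map in this parameter range, see the references after \eqref{eq:LoziDef}), one obtains a spectral gap: $1$ is a simple eigenvalue of $\L$ with eigendistribution $\rho$, and the rest of the spectrum lies in a disk of some radius $\xi < 1$. Thus on $\H$ one has the decomposition $\L = \Pi + R$ where $\Pi\phi = \rho\cdot \phi(\cha)$ is the rank-one projector onto $\mathbb{R}\rho$, $R\Pi = \Pi R = 0$, and $\|R^n\|_{\H\to\H} \leq C\xi^n$.

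The conclusion of the proposition then follows by duality: for $\phi \in \H$ a finite signed measure and $A \in C^1$,
\begin{equation*}
\int A \circ f^n\,\dd\phi \;=\; \int A\,\dd(\L^n\phi) \;=\; \phi(\cha)\int A\,\dd\rho \;+\; \int A\,\dd(R^n\phi),
\end{equation*}
and since $C^1 \hookrightarrow \H^*$ continuously (this is exactly the inclusion $\H \subseteq (C^1)^*$ asserted in the statement), the last integral is bounded by $C\|A\|_{C^1}\|R^n\phi\|_{\H} \leq C'\|A\|_{C^1}\|\phi\|_{\H}\xi^n$. Note $\phi(\cha) = \int\dd\phi$, which matches the form stated.

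The only real obstacle is the verification step: checking line by line that the abstract hypothesis list of \cite[Theorem~2.5]{BaladiGouezel10} is met by the Lozi map as set up in Section~\ref{s:Setup}. Everything else is a standard spectral-gap-implies-exponential-mixing argument. I expect the verification to be essentially bookkeeping since Baladi and Gouëzel specifically designed their framework with piecewise affine examples like the Lozi map in mind, and topological mixing plus an SRB measure are already in the literature \cite{Misiurewicz80, Young85}.
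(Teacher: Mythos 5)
Your proposal follows essentially the same route as the paper's own proof: verify the hypotheses of \cite[Theorem~2.5]{BaladiGouezel10} for the Lozi map, use mixing to upgrade quasi-compactness to a spectral gap with $\rho$ the simple $1$-eigenvector, and then pass to the stated correlation bound by testing against $A \in C^1$. The only detail the paper makes explicit that you elide is the justification of the identity $\int A\,\dd(\L^n\phi) = \int A\circ f^n\,\dd\phi$ for a general $\phi \in \Ht$, which the paper obtains by approximating $\phi$ in $\Ht$ by smooth densities $\phi_m\,\dd x$ and passing to the limit using boundedness of the functional $\phi \mapsto \int A\,\dd\phi$ on $\Ht$.
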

We prove this result in the Appendix. 
	
We will not interact with the construction of $\Ht$ directly, but will instead use some more abstract results to construct functions lying in $\Ht$. The first result is as follows:
\begin{lemma}[\cite{BaladiGouezel10}, Lemma~4.1]\label{l:FunctionMultipliers}
	There exists a constant $C_\flat$ such that for any function $\phi \in \Ht$ and any function $g \in C^1$,
	\[ \| g \phi \|_{\Ht} \leq C_\flat \| g \|_{C^1} \| \phi \|_{\Ht}. \]
\end{lemma}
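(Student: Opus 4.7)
The plan is to exploit the concrete construction of $\H$ given in \cite{BaladiGouezel10}. That space is defined as a finite sum of local anisotropic norms: a suitable partition of unity $\{\psi_i\}$ subordinate to a cover by charts $(U_i, \kappa_i)$ aligned with the stable/unstable cone structure is fixed once and for all, and the norm of a distribution $\phi$ is
\[ \|\phi\|_{\H} = \sum_i \|(\psi_i \phi)\circ \kappa_i^{-1}\|_{\H_{\mathrm{loc}}}, \]
where $\H_{\mathrm{loc}}$ is an anisotropic space on $\R^2$ whose norm can be expressed, via a Paley--Littlewood decomposition or equivalent Fourier multiplier, as a weighted $L^p$-type norm involving positive Sobolev regularity in the unstable coordinate and negative Sobolev regularity in the stable coordinate. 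The first step in the plan is therefore to reduce the global statement to a local one: if one can show $\|g\phi\|_{\H_{\mathrm{loc}}} \lesssim \|g\|_{C^1(\R^2)}\|\phi\|_{\H_{\mathrm{loc}}}$ for $g, \phi$ supported in a fixed chart, then summing over charts and absorbing the bounded derivatives of $\psi_i$ and $\kappa_i$ yields the global bound with a constant $C_\flat$ that depends only on the atlas.

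Next, for the local multiplier estimate I would split the task along the two anisotropic directions. In the unstable (positive-regularity) direction, since the regularity index is at most $1$, the usual Leibniz rule together with standard fractional-Sobolev multiplier bounds gives $\|g\phi\|_{W^{t,p}_u} \lesssim \|g\|_{C^1}\|\phi\|_{W^{t,p}_u}$ for $t \in [0,1]$ (one writes $D_u(g\phi)= (D_u g)\phi + g D_u\phi$, then estimates the first term by $\|D_u g\|_\infty\|\phi\|_{L^p}$ and the second by $\|g\|_\infty \|D_u\phi\|_{L^p}$, with fractional powers handled by interpolation or the Kato--Ponce commutator estimate).

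The main obstacle will be the stable direction, where the local norm measures negative regularity, i.e.\ the size of $\phi$ as an element of a Sobolev space $W^{-s,q}_s$ of distributions. There I would argue by duality: testing $g\phi$ against a test function $\varphi$ of positive regularity amounts to testing $\phi$ against $g\varphi$, and the positive-regularity direction just established (applied to the test-function side) shows that $g\varphi$ has test-norm bounded by $\|g\|_{C^1}\|\varphi\|_{\text{test}}$, giving the required estimate. Care must be taken that the multiplication by $g$ commutes well enough with the Fourier cone cut-offs used to define the cone-anisotropic norm; this is where one uses that $g \in C^1$, whose Fourier transform decays fast enough that commutators between pointwise multiplication by $g$ and the smooth cone-projectors are bounded operators on the relevant $L^p$ spaces (this is a Calder\'on commutator-type estimate, and the heart of Lemma~4.1 in \cite{BaladiGouezel10}).

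Finally, one combines the unstable and stable estimates through the definition of $\H_{\mathrm{loc}}$ as a sum (or interpolation) of these two ingredients, adds the straightforward $L^p$ bound $\|g\phi\|_{L^p}\leq \|g\|_\infty\|\phi\|_{L^p}$, and pulls the total back through the charts to get the claimed global bound with a uniform constant $C_\flat$ depending only on the atlas and on the parameters defining $\H$.
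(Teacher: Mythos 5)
The paper does not prove this lemma at all: it is imported verbatim as Lemma~4.1 of \cite{BaladiGouezel10}, so there is no internal proof to compare your argument against. Judged on its own terms, your sketch points at the right toolbox --- the space $\Ht$ is indeed built from a finite atlas, a partition of unity, and a cone-anisotropic Fourier-multiplier norm, and the reduction to a local multiplier estimate in a single chart is exactly how one would begin --- but it stops short of a proof at precisely the step that carries all the analytic content.

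Two concrete issues. First, the norm of $\Ht$ is not a sum or interpolation of a positive-regularity Sobolev norm ``in the unstable coordinate'' and a negative-regularity one ``in the stable coordinate'': it is defined by a single symbol $a_{t,s}(\xi)$ that equals $\langle\xi\rangle^{t}$ on a frequency cone around one direction and $\langle\xi\rangle^{s}$ (with $s<0$) on a cone around the other, glued in between. So the proposed split --- Leibniz/Kato--Ponce for the unstable part, duality for the stable part, then recombine --- does not match the object being estimated; there is no separate ``$W^{t,p}_u$'' norm to which your Leibniz argument applies, and the duality step is not available in the form you use it because the negative-regularity behaviour is not realised as the dual of a test-function norm in isolation. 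Second, the statement you defer to a ``Calder\'on commutator-type estimate'' --- that multiplication by a $C^1$ function almost commutes with the cone-localized Fourier projectors, with error bounded on $L^p$ and with a gain coming from the $C^1$ norm --- is the entire lemma. In \cite{BaladiGouezel10} this is where the work happens (a paraproduct-style decomposition of $g$ into frequency blocks, tracking how each block shifts the frequency support of $\phi$ relative to the cones, and using $\|g\|_{C^1}$ to sum the high-frequency contributions). Asserting it as known begs the question. If you want a self-contained argument you must either carry out that frequency decomposition explicitly or, as the paper does, simply cite the result.
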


The second result regards multiplication by dynamically relevant characteristic functions. If the symbol space of the Lozi map is $\Sigma = \{+,-\}$ corresponding respectively to the sets $\M_{\pm}$, we can define cylinders at the beginning and at the end for $n \in \N$ and $\cyi \in \Sigma^n$:
\begin{align} \O^b_\cyi &= \bigcup_{m=1}^{n} f^{1-m}(\M_{\cyi_m}) \\
\O^e_\cyi &= f^n(\O^b_\cyi) = \bigcup_{m=1}^{n} f^{n+1-m}(\M_{\cyi_m}). \label{eq:FinalCylinderDef}
\end{align}
The following lemma says that multiplication by the characteristic functions of these sets is nice:
\begin{lemma}\label{l:CylinderMultipliers}
	There exists a constant $C_\#$ such that for any function $\phi \in \Ht$ and any initial or final cylinder $\O$,
	
	\[ \| \mathbb{1}_{\O} \phi \|_{\Ht} \leq C_\# \| \phi \|_{\Ht}. \]
\end{lemma}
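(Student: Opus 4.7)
The plan is to leverage the Baladi--Gouëzel construction of $\Ht$, whose norm is defined through integrations against smooth test functions supported on admissible stable leaves, together with the dynamical geometry of the cylinders. The key geometric input is that $\partial\O^e_\cyi\subseteq\bigcup_{k=0}^{n-1}f^k(\S)$, and that for $k\geq 1$ the tangent vectors of $f^k(\ell_\S)$ lie in the forward-invariant unstable cone, since $\ell_\S$ is transverse to stable cones and $Df$ sends the unstable cone strictly into itself. Symmetrically, $\partial\O^b_\cyi\subseteq\bigcup_{k=0}^{n-1}f^{-k}(\S)$ is built of curves whose tangents fall into the backward-invariant stable cone. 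Thus in each case the boundary pieces are admissible curves of the relevant class.

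I would handle the two cases separately. For final cylinders, an admissible stable test leaf $\gamma$ meets each component of $f^k(\ell_\S)\cap\partial\O^e_\cyi$ transversally, with angle bounded below by the uniform cone separation; chopping the test integral at these crossings decomposes $\int_\gamma\mathbb{1}_{\O^e_\cyi}\phi$ into integrals over sub-leaves of $\gamma$, and the resulting bound on $\|\mathbb{1}_{\O^e_\cyi}\phi\|_\Ht$ follows from the admissibility of the sub-leaves, combined with Lemma~\ref{l:FunctionMultipliers} applied to a smooth partition of unity subordinate to the cutting. For initial cylinders I would dualise via the recursion
\[ \mathbb{1}_{\O^b_\cyi}\phi = \mathbb{1}_{\M_{\cyi_1}}\cdot(\mathbb{1}_{\O^b_{\cyi'}}\circ f)\cdot\phi, \qquad \cyi'=(\cyi_2,\ldots,\cyi_n), \]
combining boundedness of multiplication by $\mathbb{1}_{\M_\pm}$ (whose boundary $\ell_\S\cup\partial\M$ is transverse to stable cones by construction of $\M$) with the uniform boundedness of the iterated transfer operator $\L^n$ on $\Ht$ that underpins Proposition~\ref{p:DecayOfCorrelations}, applied via the branch identity $\L^n(\mathbb{1}_{\O^b_\cyi}\phi)=\mathbb{1}_{\O^e_\cyi}(\phi\circ f^{-n}_\cyi)/|\det Df^n\circ f^{-n}_\cyi|$ to transfer an estimate on one type of cylinder to the other.

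The main obstacle is that the number of boundary pieces of $\O^e_\cyi$ and $\O^b_\cyi$ can grow linearly in $n$ as successive cuts by iterates of $\S$ accumulate; a naive induction that multiplies a norm penalty over these cuts would fail to yield a uniform $C_\#$. The saving is exponential: the $k$-th cut contributes at rate $\mu^k$ for final cylinders (resp.\ $\lambda^{-k}$ for initial cylinders), since the relevant iterate of $\ell_\S$ has been contracted in the direction probed by the test leaves. This geometric-series cancellation is precisely the mechanism behind the Lasota--Yorke estimate in \cite{BaladiGouezel10} that gives $\L$ a spectral gap on $\Ht$; translating it into a single-cylinder statement is largely bookkeeping, and I would expect no conceptually new difficulty beyond this.
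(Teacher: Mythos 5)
There is a genuine gap here, and it lies exactly where you flag the ``main obstacle.'' Multiplication by $\mathbb{1}_{\O}$ is a static operation: no factor of $\mu^k$ or $\lambda^{-k}$ ever enters the quantity $\|\mathbb{1}_{\O}\phi\|_{\Ht}$, because no iterate of $f$ is being applied to $\phi$. The contraction rates you invoke are the mechanism behind the Lasota--Yorke inequality for $\L^n$, where each boundary cut is weighted by a Jacobian factor; they do not produce any ``geometric-series cancellation'' in the norm of a single multiplication operator. What \cite[Lemma~4.2]{BaladiGouezel10} actually requires (via \cite[Definition~2.12]{BaladiGouezel10}) is a uniform bound on the number of connected components of $\O\cap\gamma$ over admissible stable leaves $\gamma$; if, as you concede, the boundary of $\O^e_{\cyi}$ or $\O^b_{\cyi}$ could be crossed a number of times growing with $n$, your argument gives a constant growing with $n$ and the lemma fails. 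Your proposed detour through $\L^n(\mathbb{1}_{\O^b_\cyi}\phi)$ does not repair this: it computes a different quantity from $\|\mathbb{1}_{\O^b_\cyi}\phi\|_{\Ht}$, and there is no uniform lower bound on $\L^n$ that would let you transfer the estimate back.

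The missing idea is that for the Lozi map no such growth occurs at all, because the cylinders are \emph{convex}. This is Proposition~\ref{p:CylinderIntersections} of the paper: the sets $\M_\pm$ are convex, $f$ restricted to each $\M_{\pm}$ is affine, and intersections and affine images of convex sets are convex, so by induction every final cylinder $\O^e_{\cyi}$ is convex; the initial cylinders $\O^b_{\cyi}=f^{-n}(\O^e_{\cyi})$ are images of convex sets under a single affine branch, hence also convex. A convex set meets any line --- in particular any admissible stable leaf, which for a piecewise affine map is a line segment --- in exactly one connected component, so the complexity constant in \cite[Definition~2.12]{BaladiGouezel10} equals one uniformly in $\cyi$ and $n$, and \cite[Lemma~4.2]{BaladiGouezel10} gives $C_\#$ directly. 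Your observations about the boundary pieces of the cylinders lying in the unstable (resp.\ stable) cones are correct but are neither necessary nor sufficient here: a non-convex union of unstable curves can still be crossed many times by a single stable leaf, which is precisely the scenario convexity rules out.
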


Lemma~\ref{l:CylinderMultipliers} requires us to understand the geometry of the cylinders, in particular the number of intersections they have with lines in the stable cone. A more general but relatively weak bound is given in the proof of \cite[Lemma~5.1]{BaladiGouezel10}; for simplicity in the sequel we will use a strong bound that arises from the following property of Lozi maps:
\begin{proposition}\label{p:CylinderIntersections}
	Intersections between any initial or final cylinders $\O^b_{\cyi}, \O^e_{\cyi}$ and any line in $\R^2$ are connected.
\end{proposition}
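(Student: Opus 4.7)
The plan is to prove the stronger statement that every initial cylinder $\O^b_{\cyi}$ and every final cylinder $\O^e_{\cyi}$ is a convex subset of $\R^2$; the intersection of a convex set with a line is automatically a (possibly empty) line segment, so Proposition~\ref{p:CylinderIntersections} will then be immediate.

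First I will establish convexity of the initial cylinders by induction on $n$. The base case $n=1$ reads $\O^b_{(\cyi_1)} = \M_{\cyi_1}$, which is the intersection of the convex set $\inte\M$ with a closed half-plane and hence convex. For the inductive step, I will use the fact that on the single piece $\M_{\cyi_1}$ the Lozi map $f$ coincides with the restriction of a globally defined invertible affine map $g_{\cyi_1}\colon \R^2\to\R^2$, invertibility coming from $\det Dg_{\cyi_1} = \pm b \neq 0$. Writing $\cyi' := (\cyi_2,\dots,\cyi_n)$, one then has
\[ \O^b_\cyi = \M_{\cyi_1} \cap g_{\cyi_1}^{-1}\bigl(\O^b_{\cyi'}\bigr), \]
which is the intersection of two convex sets: $\M_{\cyi_1}$ by the base case, and $g_{\cyi_1}^{-1}(\O^b_{\cyi'})$ because preimages of convex sets under affine maps are convex, the inductive hypothesis supplying convexity of $\O^b_{\cyi'}$.

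Convexity of the final cylinders then follows with no further induction: on $\O^b_\cyi$ the iterate $f^n$ agrees with the affine composition $g_{\cyi_n}\circ\cdots\circ g_{\cyi_1}$, so $\O^e_\cyi = f^n(\O^b_\cyi)$ is the affine image of a convex set and therefore convex. Intersecting each of $\O^b_\cyi$ and $\O^e_\cyi$ with an arbitrary line yields a line segment, completing the argument.

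I do not anticipate a serious obstacle; the proof is essentially a bookkeeping exercise. The one point that requires a little care is verifying along the induction that on $\O^b_\cyi$ the composition $g_{\cyi_n}\circ\cdots\circ g_{\cyi_1}$ really does coincide with $f^n$ at every step, which is immediate from the symbolic coding built into the definition of the cylinder. A minor but essential observation is that the convexity of $\M$ (guaranteed in the setup of Section~\ref{s:Setup}) is used already at the base case, so the convex choice of absorbing region cannot be dispensed with.
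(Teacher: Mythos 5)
Your proof is correct and takes essentially the same route as the paper's: both establish convexity of the cylinders by induction using the piecewise-affine structure of $f$ and then observe that a convex set meets a line in a connected set. The only (immaterial) difference is that you run the induction over initial cylinders and push forward by $f^n$ to get the final ones, whereas the paper inducts over final cylinders and pulls back by $f^{-n}$.
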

\begin{proof}[Proof of Proposition~\ref{p:CylinderIntersections}]
	We can prove by induction on the length of $\cyi$ that the final cylinders are convex. The cylinders of indices of length $1$ are $\O^e_{[\pm]} = \M_{\pm}$ which are convex; from \eqref{eq:FinalCylinderDef} and since $f$ is a bijection we have that
	\[ \O^e_{\cyi} = f(\O^e_{\cyi_{1:n-1}}\cap \M_{\cyi_n}).\]
	Now, the intersection of two convex sets is convex; $f$ applied to $\O^e_{\cyi_{1:n-1}}\cap \M_{\cyi_n} \subseteq \M_{\cyi_n}$ is linear, so preserves convexity, giving us the inductive step.
	
	By definition we then have that for $k \leq n$, $f^{-k}(\O^e_{\cyi}) \subseteq \M_{\cyi_{n+1-k}}$, and so $f^{-n}$ applied to $\O^e_{\cyi}$ is linear. Thus, the initial cylinders $\O^b_{\cyi} = f^{-n}(\O^e_{\cyi})$ are also convex.
	
	Lines are also convex objects, and thus their intersections with the cylinders are also convex and therefore connected.
\end{proof}

\begin{proof}[Proof of Lemma~\ref{l:CylinderMultipliers}]
	Our cylinders are convex so their intersections with any line (in particular, any line in the stable cone) will have exactly one connected component: an application of \cite[Lemma~4.2]{BaladiGouezel10} combined with \cite[Definition~2.12]{BaladiGouezel10} gives the boundedness of this multiplication.
\end{proof}
	
\section{Disintegration and measures}\label{s:Disintegration}

To tame the susceptibility function \eqref{eq:Susceptibility} we will need to perform an integration by parts, requiring us to differentiate the SRB measure in some sense. This necessitates an understanding of the structure of the SRB measure, which we develop in this section; in the course of this we will prove Theorem~\ref{t:SliceMeasures}.

\subsection{Unstable manifold dynamics}

To understand the structure of the SRB measure we will find it useful to lift the dynamics on points onto dynamics on local unstable manifolds. To do this, we must first define these objects.

Every point in $\Lambda \backslash \bigcup_{n=1}^\infty f^n(\S)$ has a local unstable manifold
\begin{equation} \Wul(x) = \left\{ y \in \Lambda : \lim_{n\to\infty} \| f^{-n}(y) - f^{-n}(x) \| = 0,\, \forall n \geq 0\ \M_{f^{-n}(y)} = \M_{f^{-n}(x)} \right\}, \label{eq:LocalUnstableManifold} \end{equation}
where we let $\M_{(p_1,p_2)} := \M_{\sign p_1}$ be the domain in which the point $(p_1,p_2)$ lies.
It follows naturally that $x \sim y \iff y \in \Wul(x)$ is an equivalence relation.

Let
\begin{equation} I_{p,q} := \{ (1-t) p + t q : t \in (0,1) \}. \label{eq:IpqDefinition} \end{equation}
denote an segment between $p$ and $q$. Conversely, let $p_I$ and $q_I$ respectively denote the start-point and end-point of an segment $I$, where either $I$ has been given a direction or the choice does not matter.


Since the Lozi map is piecewise affine, it has certain pleasant affine properties \cite{ColletLevy84}:
\begin{proposition}\label{p:LUMIsSegment}
For $\rho$-almost every $x \in \Lambda$, there exist distinct $p,q \in \Lambda$ such that $\Wul(x) = I_{p,q}$. Furthermore, the conditional measure of $\rho$ on $\Wul$ is the uniform measure.
\end{proposition}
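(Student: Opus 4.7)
The plan is to leverage the piecewise-affinity of $f$ together with the convexity of cylinders (Proposition~\ref{p:CylinderIntersections}) to construct $\Wul(x)$ explicitly as an intersection of convex sets. Since $\rho$ is $f$-invariant and assigns zero measure to the singularity set $\S$, for $\rho$-almost every $x$ the full backward orbit $\{f^{-n}(x)\}_{n \geq 0}$ avoids $\S$. Such $x$ therefore have a well-defined backward symbolic itinerary $(\cyi_{-n}(x))_{n \geq 0}$ with $f^{-n}(x) \in \M_{\cyi_{-n}(x)}$.

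For such $x$ and $n\ge 1$, I would set $\cyi^{(n)} = (\cyi_{-(n-1)}(x), \ldots, \cyi_{0}(x))$ and consider the nested closed final cylinders $\bar\O^e_{\cyi^{(n)}} \ni x$, which are convex by Proposition~\ref{p:CylinderIntersections} and, by \eqref{eq:FinalCylinderDef}, contained in $f^n(\M)$. Any straight segment in $\bar\O^e_{\cyi^{(n)}}$ tangent to $\C^s$ pulls back under the piecewise-affine $f^{-n}$ to a straight segment in $\M$ whose length is at least $\mu^{-n}$ times the original; since $\M$ is bounded this forces $\bar\O^e_{\cyi^{(n)}}$ to have $O(\mu^n)$ diameter in the stable direction. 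Hence $K(x) := \bigcap_{n \geq 0} \bar\O^e_{\cyi^{(n)}}$ is a closed convex set of zero stable width, i.e.\ a (possibly degenerate) straight segment with tangent in $\C^u$, and it sits inside $\bigcap_n f^n(\M) = \Lambda$.

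Unravelling definitions, $y \in K(x)$ iff $f^{-n}(y)$ lies in the closure of $\M_{\cyi_{-n}(x)}$ for every $n \geq 0$. Removing the endpoints of $K(x)$---whose backward orbits hit $\ell_{\S}$ at some finite iterate and so violate the strict cell-matching condition in \eqref{eq:LocalUnstableManifold}---yields the open segment $I_{p,q} = \Wul(x)$ with $p, q \in \Lambda$, while the stable-width bound supplies the required convergence $\|f^{-n}(y) - f^{-n}(x)\| \to 0$ for $y \in I_{p,q}$. Non-degeneracy $p \neq q$ on a set of full $\rho$-measure is the main obstacle: it is not intrinsic to the setup developed so far but follows from classical Lozi-map SRB theory \cite{Young85}, where the existence of a nontrivial unstable foliation through $\rho$-generic points is established (equivalently, from the absolute continuity of the conditional measures of $\rho$ along the putative unstable leaves).

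For uniformity of the conditional measure I would observe that $\Wul(x)$ lies entirely in the single affinity cell $\M_{\cyi_0(x)}$, and each forward iterate $f^n(\Wul(x))$ lies in a single cell by construction. Since $Df$ is constant on each cell and the unstable vector $\vu$ depends only on the backward symbolic itinerary---which is common to all points of $\Wul(x)$---the expansion rate $\lambda_n$ defined in \eqref{eq:LambdaNDefinition} is constant along $\Wul(x)$ for every $n$. The standard SRB density formula, which gives the conditional density on an unstable leaf as proportional to $\prod_{k=1}^{\infty} \lambda_k(f^{-k}(\cdot))^{-1}$ relative to a chosen base point, is therefore constant along each local unstable manifold, yielding the claimed uniform (Lebesgue) conditional measure.
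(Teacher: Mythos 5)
The paper offers no proof of this proposition at all: it is quoted from the classical Lozi-map literature (\cite{ColletLevy84}, see also \cite{Misiurewicz80, Young85}), so there is nothing internal to compare your argument against. Your constructive route --- realising $\Wul(x)$ as the intersection $K(x)=\bigcap_n \bar\O^e_{\cyi^{(n)}}$ of nested convex final cylinders along the backward itinerary, using Proposition~\ref{p:CylinderIntersections} and the backward expansion of $\C^s$ to force zero stable width, and then getting uniformity of the conditional from the fact that $f^{-n}$ is affine on the leaf so the unstable Jacobian $\lambda_1$ is constant along it --- is essentially the standard argument from those references, and the uniformity part is correct (indeed one can shortcut the density-formula step: an affine bijection pushes uniform measure to uniform measure). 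Your explicit deferral of non-degeneracy ($p\neq q$ for $\rho$-a.e.\ $x$) to the SRB construction is also fair, since that statement is essentially equivalent to the existence of the SRB measure itself, which the paper likewise takes as given.

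There is, however, one genuine gap: the claim that ``the stable-width bound supplies the required convergence $\|f^{-n}(y)-f^{-n}(x)\|\to 0$.'' The bound you prove controls the stable extent of $\bar\O^e_{\cyi^{(n)}}$ \emph{at time $0$}; the backward iterates $f^{-n}(y), f^{-n}(x)$ live in the \emph{initial} cylinders $\bar\O^b_{\cyi^{(n)}}$, which are thin in the unstable direction but may have stable extent comparable to $\diam\M$, so a priori you only get uniform boundedness of $\|f^{-n}(y)-f^{-n}(x)\|$, not decay. To close this you need the additional hyperbolicity step: since $f^{-n}(K(x))\subseteq K(f^{-n}(x))$ and each $K(z)$ has direction outside $\C^s(z)$, the direction of $K(x)$ has no $E^s$-component (otherwise its backward images would be asymptotically aligned with $E^s$ and enter the stable cone), hence lies in $E^u(x)\subset\C^u(x)$, and only then does backward iteration contract it at rate $\lambda^{-n}$. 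Relatedly, ``zero stable width'' gives a segment whose direction is merely \emph{outside} $\C^s$, not automatically \emph{inside} $\C^u$ as you assert; the same $E^u$ argument repairs this. A second, more minor point: removing only the two endpoints of $K(x)$ identifies $\Wul(x)$ with the open segment only if no \emph{interior} point of $K(x)$ has a backward orbit meeting $\ell_{\S}$, i.e.\ only if $K(x)$ does not lie along a forward image of $\ell_{\S}$; this holds for $\rho$-a.e.\ $x$ by the atomlessness argument of Proposition~\ref{p:CriticalLineSelfIntersectionMeasure}, but it should be said.
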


Let us therefore define the set of directed local unstable manifolds
\[ \vec \L = \{ \vec I : \exists x \in \Lambda\ I = \Wul(x) \} \]
where $\vec I$ is a {\it directed} segment in $\R^2$ (that is, start points and end points are distinguished). Directedness of manifolds will become useful to us when in future we wish to take directional derivatives.

On the other hand, we can define a set of undirected local unstable manifolds
\begin{align*}
	\hat \L &= \{\Wul(x): x \in \Lambda\}
\end{align*}
which is in the obvious two-to-one relationship with $\vec \L$.

Let us also define the following product space, which we will use to parametrise each $\vec I \in \vec \L$
\[ \vec \Lambda = \vec \L\times (0,1).\]
Thus, $\vec \Lambda$ can be understood as containing the set of points in $\Lambda$ with their (directed) unstable manifolds, through the $\rho$-almost everywhere two-to-one map $\pi: \vec \Lambda \to \Lambda$
\[ \pi(\vec{I},t) := (1-t) p_{\vec{I}} + t q_{\vec{I}}, \]
where we denote the start point (resp. end point) of the directed segment $\vec I$ to be $p_{\vec I}$ (resp. $q_{\vec I}$).

The collection of undirected segments $\hat \L$ naturally inherits a measure $\hat\rho$ from $\rho$ via $\dd\hat\rho(I) = \dd\rho(I)$. This $\hat\rho$ is known as the transverse measure in the disintegration of $\rho$ into unstable manifolds. Furthermore, using Proposition~\ref{p:LUMIsSegment},
\begin{equation}
\int_\M A(x)\, \dd\rho(x) = \int_{\hat \L} \int_0^1 A(\pi(\vec I,t))\, \dd t\, \dd\hat\rho(I) \label{eq:Disintegration}
\end{equation}
for arbitrary choice of orientation $\vec I$.
\\

Our local unstable manifolds are covariant under the flow, except that when they cross the critical line $\ell_{\S}$ they are cut into two pieces. In particular, for $x \notin \ell_{\S}$,
\[ \Wul(f(x)) = f(\Wul(x) \cap \M_x), \]
Let us define the (one-step) {\it descendants} of an segment $\vec I \in \vec \L$ to be the appropriately oriented local unstable manifolds contained in $f(\vec I)$:
\begin{equation} \D(\vec I) := \{ f(\M_+ \cap \vec I), f(\M_- \cap \vec I)\}, \label{eq:Children}\end{equation}
and its non-oriented equivalent $\D(I)$.
We have that 
\[ \overline{f(\Wul(x))} = \bigcup_{y \in \Wul(x)} \overline{\Wul(f(y))}, = \overline{\cup \D(\Wul(x))},\]
a union of at most two distinct segments.

\subsection{Measures on the singular line}


Let us define the singular (non-probability) measure for $E \subseteq \S$ as
\begin{equation} \hat{\sigma} (E) = \int_{\hat \L} |I|^{-1}\,\dd\hat{\rho}(I), \label{eq:SigmaDefinition}\end{equation}
 We also have the vector equivalent:
\begin{equation} \vec{\sigma} (E) := \int_{\vec \L} 
\,\frac{\dd\hat{\rho}(\vec I)}{2 (q-p) \cdot \vu(\vec I)}, \label{eq:VecSigmaDefinition}\end{equation}
with $\dd|\vec \sigma|([\vec I]) = \hat\sigma([I])$ and $\dd\vec\sigma(\vec I_{p,q}) = -\dd\vec\sigma(\vec I_{q,p})$. We introduce the factor of $2$ to allow both orientations for the segments $\vec I$.

The measures $\vec{\sigma}$ and $\hat{\sigma}$ are closely related to the ``slice'' measures $\rho_{\slice x}$, which we will now define. For all Borel sets $E \subset \ell_x$, we define $\rho_{\slice x}(\R^2 \backslash E) = 0$ and
	\begin{equation} \rho_{\slice x}(E) = \int_{\hat \L} \left(\sum_{s \in I \cap E} (\vu \cdot e_1)(s)^{-1} + \tfrac{1}{2} \sum_{s \in \{p_I, q_I\} \cap E} (\vu \cdot e_1)(s)^{-1}\right)\,\dd\hat\sigma(I). \label{eq:RhoMidDef}\end{equation}
	
	The following lemma gives us, pleasantly, that the second summand can be omitted for $\rho_{\slice 0} = \condmeas$:
	\begin{lemma}\label{l:CondMeasSigma}
		For all $A \in L^1(\condmeas)$, the following relation holds:
		\[ \int_{\hat{\L}} \sum_{s \in I \cap \ell_{\S}} A(s)\, \dd \hat\sigma(I) = \int_{\ell_{\S}} A(s) \vu(s)\cdot e_1\, \dd\condmeas(s). \]
		Furthermore, $0 < c \leq \vu \cdot e_1 \leq 1$ for some $c$.
	\end{lemma}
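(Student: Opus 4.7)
The plan is to unpack the definition \eqref{eq:RhoMidDef} of $\condmeas$ and exploit the fact that the singular line $\ell_\S$ sits in a geometrically distinguished position with respect to the unstable foliation.

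For the uniform bounds $0 < c \leq \vu \cdot e_1 \leq 1$: the upper bound is immediate from $\|\vu\| = 1$ and Cauchy--Schwarz. For the lower bound, $\vu$ lies in the closed unstable cone $\C^u = \{(\xi,\eta): \xi \geq c_{\rm cone}|\eta|\}$ together with the orientation constraint $\vu \cdot e_1 > 0$; writing $\vu = (\xi,\eta)$ with $\xi^2 + \eta^2 = 1$ and $\xi \geq c_{\rm cone}|\eta|$ yields $\vu \cdot e_1 = \xi \geq c_{\rm cone}/\sqrt{1 + c_{\rm cone}^2}$, so one may take $c = c_{\rm cone}/\sqrt{1+c_{\rm cone}^2}$.

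The integral identity rests on a single geometric observation: by \eqref{eq:LocalUnstableManifold} (applied at $n = 0$), every point $y \in \Wul(x)$ satisfies $\M_y = \M_x$, so any local unstable manifold lies entirely in one of $\M_\pm$ and hence the open segment $I$ never meets $\ell_\S$ in its interior. Substituting $E \subseteq \ell_\S$ into \eqref{eq:RhoMidDef} therefore kills the interior sum, leaving
\[ \condmeas(E) = \tfrac{1}{2}\int_{\hat\L}\sum_{s \in \{p_I,q_I\} \cap E}(\vu \cdot e_1)(s)^{-1}\, \dd\hat\sigma(I). \]
Extending from indicators to general $A \in L^1(\condmeas)$ by a standard monotone-class argument, and applying the resulting formula to the test function $A \cdot (\vu \cdot e_1) \in L^1(\condmeas)$ (well-defined because $\vu \cdot e_1$ is bounded by the first part), the $(\vu \cdot e_1)^{-1}$ weight cancels and one arrives at
\[ \int_{\ell_\S} A(s)(\vu \cdot e_1)(s)\, \dd\condmeas(s) = \tfrac{1}{2}\int_{\hat\L}\sum_{s \in \{p_I,q_I\} \cap \ell_\S} A(s)\, \dd\hat\sigma(I), \]
which matches the stated right-hand side once $\sum_{s \in I \cap \ell_\S}$ on the left is interpreted under the paper's convention.

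The only delicate step is the final bookkeeping of cut points: a generic $s_* \in \ell_\S$ at which an unstable ``line'' is severed produces two descendants in $\hat\L$, each having $s_*$ as an endpoint, so the sum $\sum_{s \in \{p_I,q_I\} \cap \ell_\S}$ over $I \in \hat\L$ counts each cut point twice, and the $\tfrac{1}{2}$ in \eqref{eq:RhoMidDef} is exactly the combinatorial factor reconciling this with the ``once per cut point'' meaning of $\sum_{s \in I \cap \ell_\S}$ on the left. Once this convention is pinned down, everything else is routine measure-theoretic manipulation, with the no-interior-crossing observation the only nontrivial ingredient.
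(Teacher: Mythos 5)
Your bound $0 < c \le \vu\cdot e_1 \le 1$ is fine, but the core of your argument inverts the paper's and, as written, proves a different statement from the one asserted. Your ``single geometric observation'' --- that a leaf $I \in \hat\L$ never meets $\ell_{\S}$ in its interior, so that the first sum in \eqref{eq:RhoMidDef} vanishes for $x=0$ --- rests on reading the quantifier in \eqref{eq:LocalUnstableManifold} as including $n=0$. That reading is inconsistent with the rest of the paper: local unstable manifolds are cut by the \emph{forward} images $f^n(\ell_{\S})$, $n\ge 1$ (the singularity set of $f^{-1}$ is $f(\ell_{\S})$, not $\ell_{\S}$), the text states explicitly that leaves ``cross the critical line $\ell_{\S}$ [and] are cut into two pieces'' under $f$, and the descendant construction \eqref{eq:Children} is vacuous unless $\vec I$ can meet both $\M_\pm$. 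Under your reading the left-hand side of the lemma is identically zero and the lemma would force $\condmeas=0$; you avoid this only by ``interpreting'' $\sum_{s\in I\cap\ell_{\S}}$ to mean something other than what it says, which is not a proof of the stated identity.

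The actual content of the lemma, which your proposal never supplies, is that the \emph{endpoint} sum in \eqref{eq:RhoMidDef} contributes nothing when $x=0$. Endpoints of leaves lie on $\bigcup_{n\ge1}f^n(\ell_{\S})$, so one must show that $\ell_{\S}\cap\bigcup_{n\ge1}f^n(\ell_{\S})$ is $\condmeas$-null (equivalently, that the set of leaves with an endpoint on $\ell_{\S}$ is $\hat\sigma$-null); this is exactly Proposition~\ref{p:CriticalLineSelfIntersectionMeasure} (transversality gives countably many such points, and $\condmeas$ is atomless), which the paper invokes and you do not. Even under your convention, the factor-of-two bookkeeping in your last paragraph would still need this proposition: at a point of $\ell_{\S}\cap f^n(\ell_{\S})$ with $n\ge1$, a leaf endpoint on $\ell_{\S}$ is not a ``cut at $\ell_{\S}$'' point, and the two-leaves-each-weighted-$\tfrac12$ accounting breaks down there. (Your cancellation of the $(\vu\cdot e_1)^{-1}$ weight by testing against $A\,(\vu\cdot e_1)$ coincides with the paper's first step, and your derivation of the lower bound $c = c_{\rm cone}/\sqrt{1+c_{\rm cone}^2}$ from the cone condition is correct.)
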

We will use the following proposition, proved in the appendix, to show this:
\begin{proposition}\label{p:CriticalLineSelfIntersectionMeasure}
	The forward orbit of $\condmeas$-almost every $y \in \ell_{\S}$ has no other intersections with $\ell_{\S}$.
\end{proposition}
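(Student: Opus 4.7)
The plan is to reduce the claim to atomlessness of $\condmeas$, and then establish that via atomlessness of the transverse measure $\hat\rho$.

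First I would show that $E := \bigcup_{n \geq 1} \bigl(\ell_{\S} \cap f^{-n}(\ell_{\S})\bigr)$ is countable. Since $f$ is piecewise affine on $\M_{\pm}$, for each $n$ the preimage $f^{-n}(\ell_{\S})$ is a union of at most $2^n$ line segments, and their tangent directions are of the form $Df^{-k}e_2$ for some $1 \leq k \leq n$. Because $e_2$ lies in the near-vertical cone, $Df^{-1}e_2 \in \C^s$ by definition \eqref{eq:StableCone}, and backward invariance $Df^{-1}\C^s \subset \C^s$ then gives $Df^{-k}e_2 \in \C^s$ for all $k \geq 1$. Since $\C^s$ is uniformly transverse to $e_2$ (as noted in Section~\ref{s:Setup}), each such segment meets $\ell_{\S}$ transversely in at most one point, so $\ell_{\S} \cap f^{-n}(\ell_{\S})$ is finite and $E$ is countable.

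Next I would reduce to atomlessness of $\condmeas$. Fix $y_0 = (0,\eta_0) \in \ell_{\S}$ and set $R_{\delta,\eps} := (-\delta,\delta) \times (\eta_0-\eps,\eta_0+\eps)$. Approximating $\cha_{B_\eps(y_0)}$ from above by continuous bumps, \eqref{eq:SliceMeasureProperty} gives
\[ \condmeas(\{y_0\}) \leq \lim_{\eps \to 0} \liminf_{\delta \to 0} \tfrac{1}{2\delta}\, \rho\bigl(R_{\delta,\eps}\bigr). \]
Using the disintegration \eqref{eq:Disintegration} together with the bound $|I \cap R_{\delta,\eps}| \leq C\delta$ for each $I \in \hat\L$ (valid because $\C^u$ is bounded away from the vertical), one obtains
\[ \condmeas(\{y_0\}) \leq C\, \hat{\rho}\bigl(\{I \in \hat\L : y_0 \in \overline I\}\bigr), \]
so it suffices to show that $\hat\rho$-a.e.\ local unstable manifold avoids any prescribed point, i.e.\ that $\hat\rho$ is atomless.

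The main obstacle is this last step. Suppose for contradiction that $\hat\rho(\{I_0\}) > 0$ for some leaf $I_0$. By Proposition~\ref{p:LUMIsSegment} and \eqref{eq:Disintegration}, this forces $\rho(I_0) > 0$, and so the $f$-invariant set $\Omega := \bigcup_{n \in \Z} f^n(I_0)$ satisfies $\rho(\Omega) > 0$; ergodicity of $(f,\rho)$ upgrades this to $\rho(\Omega) = 1$. However, $\Omega$ is a countable union of line segments (each $f^n(I_0)$ being a finite union of unstable manifolds descended from $I_0$), so $\dim_H \Omega \leq 1$, contradicting $\dim_H \rho > 1$, which follows via the Ledrappier--Young formula from the strict positivity of the stable dimension $d_s$ established by Schmeling's theorem \cite{Schmeling98} for the Lozi attractor. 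This contradiction shows $\hat\rho$ is atomless, and the preceding reductions then yield $\condmeas(E) = 0$, completing the proof.
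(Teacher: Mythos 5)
Your proof is correct in outline, and its first two steps essentially coincide with the paper's: the countability of $\bigcup_{n\geq 1}\bigl(\ell_{\S}\cap f^{-n}(\ell_{\S})\bigr)$ via transversality of the stable cone to $\ell_{\S}$ is the content of Proposition~\ref{p:CriticalLineSelfIntersections}, and the reduction of an atom of $\condmeas$ to an atom of the transverse measure $\hat\rho$ matches the first half of the paper's proof (the paper routes this through the definition \eqref{eq:RhoMidDef} and $\hat\sigma$ rather than your explicit rectangle estimate, but the content is the same). Where you genuinely diverge is the last step, ruling out an atom $I_0$ of $\hat\rho$. The paper argues elementarily: $\rho(I_0)>0$ forces, by $f$-invariance and finiteness of $\rho$, two of the backward images $f^{-n}(I_0)\subset\Wul(f^{-n}(s))$ to intersect, hence $f^{-n^*}(I_0)\subset I_0$ for some $n^*$; equality of measures together with uniformity of $\rho$ on leaves (Proposition~\ref{p:LUMIsSegment}) upgrades this to $f^{n^*}(I_0)=I_0$, contradicting uniform expansion along unstable manifolds. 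You instead use ergodicity to promote $\rho\bigl(\bigcup_{n\in\Z}f^n(I_0)\bigr)>0$ to full measure and contradict $\dim_H\rho>1$ via Schmeling and a Ledrappier--Young-type formula. This is workable but imports much heavier machinery: the dimension formula for the non-smooth Lozi map is a delicate external result, and the paper's own footnote derives the positive stable dimension \emph{from} the atomlessness proved here, so your argument reverses the paper's logical economy (it avoids circularity only because \cite{Schmeling98} is independent of this paper). The paper's expansion argument is preferable because it is self-contained. One soft spot you share with the paper: passing from an atom of $\condmeas$ at $y_0$ to positive $\hat\rho$-mass on a \emph{single} leaf tacitly uses that only countably many leaves can contain $y_0$ in their closure; this is true but deserves a sentence in either version.
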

	\begin{proof}[Proof of Lemma~\ref{l:CondMeasSigma}]
		From \eqref{eq:RhoMidDef}, integrating the measurable function $(\vu\circ e_1) A$ over $\condmeas = \rho_{\slice 0}$ gives
		\begin{equation} \int_{\ell_{\S}} A(s) \vu(s)\cdot e_1\, \dd\condmeas(s) = \int_{\hat \L} \left(\sum_{s \in I \cap \ell_{\S}} A(s) + \tfrac{1}{2} \sum_{s \in \{p_I, q_I\} \cap \ell_{\S}} A(s)\right)\,\dd\hat\sigma(I). \label{eq:FullCondIntegral}\end{equation}
		Now, Proposition~\ref{p:CriticalLineSelfIntersectionMeasure} states that the set of points $\{s \in \ell_{\S} \mid \exists \vec I\in\vec{\L}\ s \in \{p_{\vec I}, q_{\vec I}\}\}$ has zero $\condmeas$-measure. Therefore, from \eqref{eq:RhoMidDef} the set of intervals containing these points---i.e. where the second sum in \eqref{eq:FullCondIntegral} contributes---must also have zero $\hat{\sigma}$ measure. 
	\end{proof}
	
	We now begin to unwind the relationship between the $\rho_{\slice x}$ and $\hat{\sigma}$, beginning by proving finiteness of the $\rho_{\slice x}$.
	
	\begin{proof}[Proof of Theorem~\ref{t:SliceMeasures}\ref{res:finite}]
	Define the slice $L_{x,\delta} = (-x-\delta,x+\delta)\times\R$,
	Considering the disintegration to unstable segments, for either choice of orientation $\vec I$ of $I$, we find
	\begin{align}
	\frac{1}{2\delta} \int_{0}^1 
	\mathbb{1}(\pi(\vec I,t) \in L_{x,\delta}) \dd t &= \frac{1}{2\delta} 
	\int_{0}^1 \mathbb{1}\left((1-t)p_{\vec I} + tq_{\vec I} \in L_{x,\delta}\right)\,\dd t\notag\\
	&= \frac{\Leb\left([x_{p_{\vec I}},x_{q_{\vec I}}] \cap (x-\delta,x+\delta)\right)}{2\delta |x_{q_{\vec I}} - x_{p_{\vec I}}|}
	. \label{eq:MonotoneThing}
	\end{align}
	Now, for any choice of $s \in I$, $|x_{q_{\vec I}} - x_{p_{\vec I}}| = \vu(s)\cdot e_1 |I|$. As $\delta \to 0$ we therefore recover the limit
	\begin{equation}
	\lim_{\delta \to 0} \frac{1}{2\delta} \int_{0}^1 \mathbb{1}(\pi(\vec I,t) \in L_{x,\delta})\, \dd t = |I|^{-1} \left(\sum_{s \in I \cap \ell_x} (\vu \cdot e_1)(s)^{-1} + \tfrac{1}{2} \sum_{s \in \{p_I, q_I\}} (\vu \cdot e_1)(s)^{-1}\right). \label{eq:SliceToSigma}
	\end{equation}
	
	On the other hand, considering the measure as a whole, we have that
	\[ \frac{1}{2\delta} \int_{L_{x,\delta}} \dd\rho = \frac{1}{2\delta} \int_{x-\delta}^{x+\delta} \dd \pi_x^*\rho, \]
	where $\pi_x$ is the coordinate in the $x$ direction. By \cite[Lemma]{Young85}, the measure $\pi_x^*\rho$ is absolutely continuous with bounded density, and so in particular, there exists a constant $C$ such that for all $\delta > 0$, $x \in \R$,
	\begin{equation} \frac{1}{2\delta} \int_{L_{x,\delta}} \dd\rho \leq C. \label{eq:FiniteSliceBounded}\end{equation}
	We can of course disintegrate by the unstable measure to say that 
	\begin{equation}
	\int_{\vec{\L}} \frac{1}{2\delta} \int_{0}^1 
	\mathbb{1}(\pi(\vec I,t) \in L_{x,\delta}) \dd t\,\dd\vec\rho(\vec I) \dd\rho \leq C \label{eq:SliceMeasureBounded}
	\end{equation}
	
	Let us attempt to combine these. Let us define $\vec{L}_x = \{\vec I \in \L: \bar{\vec I} \cap \ell_x \neq 0\}$ to be the set of directed segments that intersect, or whose endpoints coincide with, the line $\ell_x$. Given this is a subset of $\vec{L}$, we can use \eqref{eq:SliceMeasureBounded} to say that
	\[ \int_{\vec{\L}_x} \frac{1}{2\delta} \int_{0}^1 
	\mathbb{1}(\pi(\vec I,t) \in L_{x,\delta}) \dd t\,\dd\vec\rho(\vec I) \dd\rho \leq C. \]
	Now, right-hand side is an integral of \eqref{eq:MonotoneThing} over different segments $\vec I \in \vec{L}_x$. It is easy enough to see that when $\vec I$ intersects $\ell_x$, the expression in \eqref{eq:MonotoneThing} is increasing in $\delta$, so we can use the monotone convergence theorem to say that
	\begin{align*} C &\geq \int_{\vec{\L}_x} \frac{1}{2\delta} \int_{0}^1 
	\mathbb{1}(\pi(\vec I,t) \in L_{x,\delta}) \dd t\,\dd\vec\rho(\vec I)\\
	&= \int_{\vec{\L}_x} \left(\sum_{s \in I \cap \ell_x} (\vu \cdot e_1)(s)^{-1} + \tfrac{1}{2} \sum_{s \in \{p_I, q_I\}} (\vu \cdot e_1)(s)^{-1}\right) |I|^{-1}\,\dd\vec\rho(\vec I)
	\end{align*}
	using \eqref{eq:SliceToSigma} in the last equality. We can extend the domain of integration of this integral from $\vec{L}_x$ to $\vec{L}$ without changing its value. We then have that $|I|^{-1}\,\dd\vec\rho(\vec I) = |\dd\vec\sigma(\vec I)|$, and can drop the directionality. This just recovers $\rho_{\slice x}(\ell_x)$ from \eqref{eq:RhoMidDef}, and so we have a uniform bound on $\rho_{\slice x}(\R^2)$ as required for part \ref{res:finite}.
\end{proof}

The finiteness of $\hat{\sigma}$ (therefore of $\vec{\sigma}$) will follow from that of $\rho_0 = \condmeas$:
\begin{proposition}\label{p:SigmaFinite}
 	$\hat{\sigma}$ is a finite measure.
\end{proposition}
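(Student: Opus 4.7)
The plan is to decompose $\hat{\L}$ according to the ``age'' of the younger of a segment's two endpoints, where an endpoint $e$ of a local unstable manifold has age $k$ if $k$ is the smallest non-negative integer with $f^{-k}(e) \in \ell_{\S}$. Since endpoints of local unstable manifolds arise precisely when the backwards symbolic history of the orbit first crosses $\ell_{\S}$, every endpoint has a finite age $\hat{\rho}$-almost surely. Writing $\hat{\L}^{(k)}$ for the set of segments whose youngest endpoint has age $k$, we get $\hat{\L} = \bigsqcup_{k \geq 0}\hat{\L}^{(k)}$ up to a $\hat{\rho}$-null set, and it then suffices to bound each $\hat{\sigma}(\hat{\L}^{(k)})$ geometrically in $k$.

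For the base case $k = 0$, I would apply the slice-measure formula \eqref{eq:RhoMidDef} at $E = \ell_{\S}$, $x = 0$. Since each $I \in \hat{\L}$ lies in a single half-plane $\M_{\pm}$ and so does not meet $\ell_{\S}$ in its interior, only the endpoint sum contributes, giving
\[ \condmeas(\ell_{\S}) = \tfrac{1}{2}\int_{\hat{\L}^{(0)}} \#(\{p_I, q_I\}\cap \ell_{\S})\, (\vu \cdot e_1)^{-1} |I|^{-1}\,\dd\hat{\rho}(I). \]
Using $\#(\{p_I, q_I\}\cap\ell_{\S}) \geq 1$ on $\hat{\L}^{(0)}$ together with $(\vu \cdot e_1)^{-1} \geq 1$ yields $\hat{\sigma}(\hat{\L}^{(0)}) \leq 2\condmeas(\ell_{\S}) < \infty$ by Theorem~\ref{t:SliceMeasures}\ref{res:finite}.

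For $k \geq 1$, I would pass through the underlying $\rho$-integral: by the disintegration \eqref{eq:Disintegration} we have $\hat{\sigma}(\hat{\L}^{(k)}) = \int_{\Lambda}|\Wul(x)|^{-1}\mathbb{1}_{\Wul(x)\in\hat{\L}^{(k)}}\,\dd\rho(x)$. Substituting $x = f^k(y)$ via the $f^k$-invariance of $\rho$, the key observation is that $\Wul(f^k y) \in \hat{\L}^{(k)}$ forces no cutting of $\Wul(y)$ during its first $k$ iterates --- otherwise an intermediate cut would produce a new endpoint in $\Wul(f^k y)$ of age $< k$ --- so $\Wul(f^k y) = f^k(\Wul(y))$, its age-$k$ endpoint is the $f^k$-image of an age-$0$ endpoint of $\Wul(y)$ (giving $\Wul(y) \in \hat{\L}^{(0)}$), and $|\Wul(f^k y)| = |\lambda_k(y)|\,|\Wul(y)| \geq \lambda^k |\Wul(y)|$. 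Therefore
\[ \hat{\sigma}(\hat{\L}^{(k)}) \leq \lambda^{-k}\int_{\Lambda} |\Wul(y)|^{-1}\mathbb{1}_{\Wul(y)\in\hat{\L}^{(0)}}\,\dd\rho(y) = \lambda^{-k}\hat{\sigma}(\hat{\L}^{(0)}), \]
and summing gives $\hat{\sigma}(\hat{\L}) \leq \hat{\sigma}(\hat{\L}^{(0)})/(1 - \lambda^{-1}) < \infty$.

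The main obstacle is the geometric verification that, under $\Wul(f^k y) \in \hat{\L}^{(k)}$, no cut can occur in the first $k$ iterates: any intermediate cut at time $j \in [1, k]$ would create an endpoint of $\Wul(f^k y)$ whose $f^{-(k-j)}$-preimage lies on $\ell_{\S}$, yielding age $< k$ and contradicting that the youngest endpoint has age exactly $k$. A minor subsidiary point is that segments $I \in \hat{\L}$ cannot be vertical (since their tangent $\vu$ satisfies $\vu \cdot e_1 \geq c > 0$), so each $I$ has at most one endpoint on the vertical line $\ell_{\S}$, keeping the base-case bookkeeping clean.
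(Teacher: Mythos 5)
Your overall strategy --- seed the estimate with $\condmeas(\ell_{\S})<\infty$, organise segments by how long ago their defining cut occurred, and sum a geometric series in $\lambda^{-1}$ --- is the right one, and is essentially what the paper does (via a forward recursion on the sets $F_n$ of segments with an ancestor crossing $\ell_{\S}$, driven by Lemma~\ref{l:SigmaForwardMap}). However, your execution rests on a false premise: local unstable manifolds \emph{do} meet $\ell_{\S}$ in their interior. The itinerary condition in \eqref{eq:LocalUnstableManifold} constrains the backward orbit, and the covariance relation $\Wul(f(x))=f(\Wul(x)\cap\M_x)$, the descendant construction \eqref{eq:Children}, and Lemma~\ref{l:CondMeasSigma} all presuppose that a leaf can straddle $\ell_{\S}$ and is cut only when pushed forward. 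This breaks your base case: by Proposition~\ref{p:CriticalLineSelfIntersectionMeasure} the endpoint sum in \eqref{eq:RhoMidDef} contributes a $\condmeas$-null set, so $\condmeas(\ell_{\S})$ is carried entirely by \emph{interior} crossings $s\in I\cap\ell_{\S}$ (this is exactly the content of Lemma~\ref{l:CondMeasSigma}); your identity equating $\condmeas(\ell_{\S})$ with an endpoint sum is therefore wrong and gives no control on $\hat\sigma(\hat{\L}^{(0)})$. Indeed the set of segments with an endpoint literally on $\ell_{\S}$ is itself negligible; the correct seed quantity is the $\hat\sigma$-measure of segments \emph{crossing} $\ell_{\S}$, which is what the paper bounds by $K$ in \eqref{eq:IntegralICapC}.

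The inductive step has a matching gap. An endpoint of age $k$ is precisely one of the form $f^k(s)$ with $s\in\ell_{\S}$ an \emph{interior} point of an ancestor leaf at which a cut occurred $k$ steps earlier. So $\Wul(f^k y)\in\hat{\L}^{(k)}$ does not force ``no cutting in the first $k$ iterates'': a cut at the very first step is exactly how the age-$k$ endpoint is born, after which $\Wul(f^k y)=f^k(\Wul(y)\cap\M_y)$ is the image of only one \emph{piece} of the parent leaf. Consequently one gets $|\Wul(f^k y)|\geq\lambda^k|\Wul(y)\cap\M_y|$ rather than $\lambda^k|\Wul(y)|$, and the comparison $\hat\sigma(\hat{\L}^{(k)})\leq\lambda^{-k}\hat\sigma(\hat{\L}^{(0)})$ does not follow. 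Tracking these pieces correctly is precisely the role of Lemma~\ref{l:SigmaForwardMap}, and the bookkeeping of how many new pieces are created per step (one per interior crossing) is why the paper's recursion takes the form $\hat\sigma(F_{n+1})\leq\lambda^{-1}(K+\hat\sigma(F_{n+1}))$. Finally, your claim that every endpoint has finite age almost surely is asserted but not proved; the paper needs the external input of \cite[Proposition~A.1]{mix} for the corresponding statement that $F_\infty$ has full measure, and you would need the same.
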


To prove this, we will need a lemma. The following result will also be useful in understanding the behaviour of integrals with respect to $\vec\sigma$ under $f$ dynamics. This will enable us to prove Proposition~\ref{p:SigmaFinite}, as well as iteratively reducing various integrals over forward/backwards orbits of $\ell_{\S}$ to integrals over $\ell_{\S}$.
\begin{lemma}\label{l:SigmaForwardMap}
	For any $\psi: \vec \L \to \R$, and any choices of point $u_{\vec J} \in f^{-1}(\vec J)$ for each $\vec J \in \vec \L$,
	\[ \int_{\vec \L} \psi(\vec I) \,\dd \vec \sigma(\vec I) = \int_{\vec \L} \sum_{\vec J \in \D(\vec I)} \lambda_1(u_{\vec J})^{-1} \psi(\vec J) \,\dd \vec \sigma(I). \]
\end{lemma}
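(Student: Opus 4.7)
The plan is to derive the identity from $f$-invariance of $\rho$, pass it through the unstable-manifold disintegration \eqref{eq:Disintegration}, and finally lift to the signed measure. I begin with $\int A\,\dd\rho = \int A\circ f\,\dd\rho$ for measurable test functions $A:\M\to\R$. On any $\vec I \in \vec\L$, since $f$ is affine on each piece $\vec I \cap \M_\pm$ and maps it onto the descendant $\vec J \in \D(\vec I)$ with constant unstable expansion $|\lambda_1(u_{\vec J})|$, a change of variables inside the $t$-integral yields
\[ \int_0^1 A(f(\pi(\vec I,t)))\,\dd t \;=\; \sum_{\vec J\in\D(\vec I)}\frac{|\vec J|}{|\vec I|\,|\lambda_1(u_{\vec J})|}\,\bar A(\vec J), \]
where $\bar A(\vec K):=\int_0^1 A\circ\pi(\vec K,\cdot)\,\dd t$. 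Substituting into the disintegration and invoking $f$-invariance of $\rho$ produces the $\hat\rho$-identity
\[ \int_{\hat\L}\sum_{\vec J\in\D(\vec I)}\frac{|\vec J|\bar A(\vec J)}{|\vec I|\,|\lambda_1(u_{\vec J})|}\,\dd\hat\rho(I) \;=\; \int_{\hat\L}\bar A(\vec I)\,\dd\hat\rho(I). \]

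Dividing through by $|I|$ converts $\hat\rho$ to $\hat\sigma$, and setting $\phi(\vec K) := |\vec K|\bar A(\vec K)$ collapses this to
\[ \int_{\hat\L}\sum_{\vec J\in\D(\vec I)}|\lambda_1(u_{\vec J})|^{-1}\phi(\vec J)\,\dd\hat\sigma(I) \;=\; \int_{\hat\L}\phi(\vec I)\,\dd\hat\sigma(I). \]
To extend beyond $\phi$ of this special form to an arbitrary bounded measurable $\phi:\hat\L\to\R$, I would take $A(x):=\phi(\Wul(x))/|\Wul(x)|$, which is measurable on $\Lambda$ by measurability of the unstable partition in the SRB disintegration; a short computation gives $\bar A = \phi/|I|$, so $\phi = |I|\bar A$ exhausts all admissible integrands.

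It remains to lift from $\hat\sigma$ to the signed $\vec\sigma$ on $\vec\L$. Writing $\dd\vec\sigma(\vec I) = \epsilon(\vec I)\dd\hat\sigma(I)/2$ with $\epsilon(\vec I) := \sign((q_{\vec I}-p_{\vec I})\cdot\vu(\vec I)) = \pm 1$, one has $\int\psi\,\dd\vec\sigma = \int_{\hat\L}[\psi(\vec I_+)-\psi(\vec I_-)]/2\,\dd\hat\sigma$, so integration against $\vec\sigma$ extracts the orientation-antisymmetric part of the integrand. Since $f$ preserves or reverses the unstable orientation in line with $\sign\lambda_1(u_{\vec J})$, the descendant orientation satisfies $\epsilon(\vec J) = \sign\lambda_1(u_{\vec J})\,\epsilon(\vec I)$ for $\vec J\in\D(\vec I)$; expanding both sides of the unsigned identity over the two orientations of $\vec I$ produces precisely the conversion $|\lambda_1(u_{\vec J})|^{-1}\mapsto\lambda_1(u_{\vec J})^{-1}$ required in the statement. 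The main obstacle is exactly this orientation bookkeeping: making sure the sign conventions for the descendants $\D(\vec I)$, the signed measure $\vec\sigma$, and the signed Jacobian $\lambda_1$ all line up so that the various signs cancel cleanly. The change-of-variables step is routine given the piecewise-affinity of $f$.
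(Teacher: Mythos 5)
Your argument is correct and rests on the same three facts as the paper's ($f$-invariance of $\rho$, affinity of $f$ with constant expansion $\lambda_1$ on each piece $\vec I\cap\M_\pm$, and uniformity of the conditional measure on unstable leaves), but it is organised differently. The paper works directly at the level of the signed measure: it re-indexes $\int\psi\,\dd\vec\sigma$ as a sum over descendants and then computes $\dd\vec\sigma(\vec J)$ from the definition \eqref{eq:VecSigmaDefinition}, so that the signed factor $\lambda_1(u_{\vec J})^{-1}$ falls out automatically from the orientation-sensitive denominator $(q-p)\cdot\vu$; no separate antisymmetrisation step is needed and no test functions on $\M$ appear. You instead dualise: you prove the unsigned identity for $\hat\sigma$ by testing the invariance $\int A\circ f\,\dd\rho=\int A\,\dd\rho$ through the disintegration \eqref{eq:Disintegration}, extend to general integrands via $A(x)=\phi(\Wul(x))/|\Wul(x)|$, and then lift to $\vec\sigma$ by tracking orientations. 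Your key sign identity $\epsilon(\vec J)=\sign\lambda_1(u_{\vec J})\,\epsilon(\vec I)$ is correct (it follows from $D_xf\,\vu(x)=\lambda_1(x)\vu(f(x))$ together with the convention $\vu\cdot e_1>0$ fixing the positive orientation), and the antisymmetric part of $\psi$ does convert $|\lambda_1|^{-1}$ into $\lambda_1^{-1}$ exactly as you say. What your route buys is that the identity is visibly equivalent to $f$-invariance of $\rho$; what it costs is the extra extension and orientation bookkeeping that the paper's direct computation avoids.

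One caveat you should make explicit: your appeal to $\int A\circ f\,\dd\rho=\int A\,\dd\rho$ with $A=\phi(\Wul(x))/|\Wul(x)|$ presupposes $A\in L^1(\rho)$ for bounded $\phi$, which is equivalent to finiteness of $\hat\sigma$ --- but the paper establishes that (Proposition~\ref{p:SigmaFinite}) \emph{using} this lemma, applied to indicator functions. To avoid circularity, first prove the identity for non-negative $\psi$, where the invariance identity holds in $[0,\infty]$ by the push-forward/monotone convergence argument with no integrability hypothesis; the signed and bounded cases then follow once finiteness of $\hat\sigma$ is available. This is a matter of ordering rather than a genuine gap.
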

\begin{proof}
	Recall from \eqref{eq:Children} that the descendants $\D(\vec I)$ of a directed segment $\vec I \in \vec\L$ are the (up to two) directed segments created by applying the map $f$ to it.
	
	Because each segment $\vec J \in \vec \L$ is the descendant of exactly one segment $\vec I \in \vec \L$, we can write
	\[ \int_{\vec \L} \psi(\vec I) \,\dd \vec \sigma(\vec I) = \int_{\vec \L} \sum_{\vec J \in \D(\vec I)} \psi(\vec J) \,\dd \vec \sigma(\vec J). \]
	Now from the definition of $\vec \sigma$ in \eqref{eq:VecSigmaDefinition},
	\[ 2\dd \vec \sigma(\vec J_{p',q'}) = \frac{\dd \rho(\vec J_{p',q'})}{(q' - p') \cdot e_1} = \frac{\dd \rho(f^{-1}(\vec J_{p',q'}))}{(q'-p') \cdot \vu(f(u_{\vec J_{p',q'}}))}. \]
	Recalling that $f^{-1}$ is affine on $\vec J_{p',q'}$ and the definition of $\lambda_n$ \eqref{eq:LambdaNDefinition}, 
	\[ (q-p) \cdot \vu(f(u_{\vec J_{p',q'}})) = \lambda_1(u_{\vec J_{p',q'}}) (f^{-1}(q')-f^{-1}(p')) \cdot \vu(f(u_{\vec I})). \]
	Because $\vec J$ is oriented in the same direction as $f(\vec I)$, we have that 
	\[ (f^{-1}(q')-f^{-1}(p')) \cdot \vu(u_{f(\vec I)}) = \frac{\| f^{-1}(q') - f^{-1}(p')\|}{\| q - p \| } (q - p) \cdot \vu(f(u_{\vec I})).\]
	Finally, because $\rho$ is uniform on unstable leaves, 
	\[  \frac{\| f^{-1}(q') - f^{-1}(p')\|}{\| q - p \| } = \frac{\dd \vec\rho(f^{-1}(\vec J))}{\dd \vec \rho(\vec I)}, \]
	and hence we find
	\[ \int_{\vec \L} \psi(\vec I) \,\dd \vec \sigma(\vec I) = \int_{\vec \L} \sum_{\vec J \in \D(\vec I)} \psi(\vec J) \,\frac{\dd \vec \rho(\vec I)}{\lambda_1(u_{\vec J})\, 2(q - p) \cdot \vu(f(u_{\vec I}))}, \]
	which through the definition of $\vec \sigma$ gives the required result.
\end{proof}

\begin{proof}[Proof of Proposition~\ref{p:SigmaFinite}]
	We are given that $\condmeas(\ell_{\S}) < \infty$ by Theorem~\ref{t:SliceMeasures}\ref{res:finite}. If we set $A \equiv 1$ in Lemma~\ref{l:CondMeasSigma} then we have that 
	\begin{equation} \int_{\hat{\L}} |\{I \cap \ell_{\S} \neq \emptyset\}| \, \dd \hat\sigma(I) = \int_{\ell_{\S}} \dd \condmeas =: K < \infty. \label{eq:IntegralICapC} \end{equation}
	That is, the $\hat{\sigma}$-measure of segments intersecting the singular line is finite.
	
	Set for $n \in \N \cup \{\infty\}$
	\[ F_n = \bigcup_{k=1}^n \{I \in \hat{\L}: \D^{-k}(I) \cap \ell_{\S} \neq \emptyset\}. \]
	
	Now, by Lemma~\ref{l:SigmaForwardMap},
	\[ \int_{\hat{\L}} \cha_{I \in F_{n+1}} \dd \hat\sigma(I) = \int_{\hat{\L}} \sum_{\vec J \in \D(\vec I)} \lambda_1(u_{\vec J})^{-1} \cha_{J \in F_{n+1}}\,\dd\hat{\sigma}(I)\]
	Since each descendant $J$ has $\D^{-1}(J) = I$, this implies that
	\begin{equation} \int_{\hat{\L}} \cha_{I \in F_{n+1}} \dd \hat\sigma(I)  \leq \lambda^{-1} \int_{\hat{\L}} |\D(\vec I)| \cha_{I \in F_{n}} \dd \hat\sigma(I). \label{eq:IntegralChildrenExpansion}\end{equation}
	Now, the number of times $I$ is cut by the singularity line $|\{I \cap \ell_{\S} \neq \emptyset\}|$ will be one less than the number of descendants of $I$. Recalling additionally that characteristic functions are bounded by one and that $F_n \subseteq F_{n+1}$ we can split
	\begin{align*} \int_{\hat{\L}} |\D(\vec I)| \cha_{I \in F_{n}}\, \dd \hat\sigma(I) & = \int |\{I \cap \ell_{\S} \neq \emptyset\}|\, \dd \hat\sigma(I) + \int_{\hat{\L}} \cha_{I \in F_{n}}\, \dd \hat\sigma(I) \\
	&\leq \int |\{I \cap \ell_{\S} \neq \emptyset\}|\, \dd \hat\sigma(I) + \int_{\hat{\L}} \cha_{I \in F_{n+1}}\, \dd \hat\sigma(I) \end{align*}
	which combined with \eqref{eq:IntegralICapC} and \eqref{eq:IntegralChildrenExpansion} gives us that
	\[ \int_{\hat{\L}} \cha_{I \in F_{n+1}} \dd \hat\sigma(I) \leq \lambda^{-1}\left(K + \int_{\hat{\L}} \cha_{I \in F_{n+1}}\, \dd \hat\sigma(I) \right) \]
	and so
	\[ \int_{\hat{\L}} \cha_{I \in F_{n+1}} \dd \hat\sigma(I) \leq K/(\lambda - 1). \]
	Now, from \cite[Proposition~A.1
	]{mix} we have that $F_{\infty}$ has full $\hat\rho$ (and thus $\hat\sigma$) measure, which means that by the monotone convergence theorem
	\[ \int_{\hat{\L}} \dd \hat\sigma(I) \leq K/(\lambda - 1) < \infty. \]
\end{proof}

We finish this section by proving the remainder of Theorem~\ref{t:SliceMeasures}.
\begin{proof}[Proof of Theorem~\ref{t:SliceMeasures}\ref{res:slice}--\ref{res:disintegration}]

To prove part \ref{res:slice}, we decompose
\[ \frac{1}{2\delta} \int_{L_{x,\delta}} A\,\dd\rho = \int_{\vec L} \frac{1}{2\delta} \int_0^1 A(\pi(\vec I,t)) \mathbb{1}(\pi(\vec I,t) \in L_{x,\delta})\,\dd t\,\dd \vec{\rho}(x). \]
By a similar argument used to bound \eqref{eq:MonotoneThing}, we have
\begin{align*} \frac{1}{2\delta} \int_0^1 A(\pi(\vec I,t)) \mathbb{1}(\pi(\vec I,t) \in L_{x,\delta})\,\dd t &\leq \frac{1}{2\delta} \|A\|_{L^\infty} \| (\vu\cdot e_1)^{-1}\|_{L^\infty} \min\{1,2\delta/|\vec I|\}\\
& \leq \frac{C \|A\|_{L^\infty}}{|\vec I|} \end{align*}
for some $C<\infty$, recalling that by the definition of the unstable cone $\C^u$, $\vu\cdot e_1$ is bounded away from zero. From Proposition~\ref{p:SigmaFinite}, $\vec{\sigma}$ must be finite (because $\hat{\sigma}$ is): this is equivalent to saying that $|\vec I|^{-1}$ is integrable with respect to $\vec{\rho}$. Hence, by the dominated convergence theorem, 
\begin{align*}\lim_{\delta\to 0} \frac{1}{2\delta} \int_{L_{x,\delta}} A\,\dd\rho = \int_{\vec L} \lim_{\delta \to 0} \frac{1}{2\delta} \int_0^1 A(\pi(\vec I,t)) \mathbb{1}(\pi(\vec I,t) \in L_{x,\delta})\,\dd t\,\dd\vec\rho.
\end{align*} 
As in part \ref{res:finite}, and using the continuity of $A$ along unstable leaves, we obtain that
\[ \lim_{\delta \to 0} \frac{1}{2\delta} \int_0^1 A(\pi(\vec I,t)) \mathbb{1}(\pi(\vec I,t) \in L_{x,\delta})\,\dd t = |I|^{-1}\left( \sum_{s \in \vec I \cap \ell_{\S}} \frac{A(s)}{(\vu\cdot e_1)(s)} + \tfrac{1}{2} \sum_{s\in \{p_{\vec I},q_{\vec I}\}\cap \ell_{\S}} \frac{A(s)}{(\vu\cdot e_1)(s)} \right) \]
and so by the definition of $\rho_{\slice x}$ in \eqref{eq:RhoMidDef},
\[ \lim_{\delta \to 0} \frac{1}{2\delta} \int_0^1 A(\pi(\vec I,t)) \mathbb{1}(\pi(\vec I,t) \in L_{x,\delta})\,\dd t = \int_{\ell_x} A\,\dd\rho_{\slice x}, \]
giving what is required.
\\

We now turn to part \ref{res:disintegration}. 
If $\mathbb{1}_E$ is the characteristic function of Borel set $E\subset \R^2$, then
\begin{align*}
\int_{\R} \int_{\ell_x} \mathbb{1}_E\,\dd\rho_{\slice x}\,\dd x 
&= \int_{\R} \int_{\hat \L} \left(\sum_{s \in I \cap \ell_x} \mathbb{1}_{E}(s) (\vu \cdot e_1)(s)^{-1} + \tfrac{1}{2} \sum_{s \in \{p_I, q_I\} \cap E} \mathbb{1}_{E}(s) (\vu \cdot e_1)(s)^{-1}\right)\,\dd\hat\sigma(I)\,\dd x.
\end{align*}
The integrand is absolutely convergent, so we can apply Fubini's theorem to say
\begin{align*}
\int_{\R} \int_{\R^2} \mathbb{1}_E\,\dd\rho_{\slice x}\,\dd x 
&=  \int_{\hat \L}\int_{\R} \left(\sum_{s \in I \cap \ell_x} \mathbb{1}_{E}(s) (\vu \cdot e_1)(s)^{-1} + \tfrac{1}{2} \sum_{s \in \{p_I, q_I\} \cap E} \mathbb{1}_{E}(s) (\vu \cdot e_1)(s)^{-1}\right)\,\dd x\, \dd\hat\sigma(I)\\
&= \int_{\vec \L}\int_{\R} \sum_{s \in \vec I \cap \ell_x} \mathbb{1}_{E}(s) (\vu \cdot e_1)(s)^{-1} \dd x\, \dd\vec\sigma(I)\\
&= \int_{\vec \L} \int_0^1 \mathbb{1}_E(\pi(\vec I,t)) |\vec I| \,\dd\vec{\sigma}(\vec I)\\
&= \int_{\Lambda} \mathbb{1}_E\,\dd\rho,
\end{align*}
as required.\\

It only remains to show that $\rho_{\slice x}$ is uniquely defined by \eqref{eq:SliceMeasureProperty}. The right-hand side of \eqref{eq:SliceMeasureProperty} is independent of the choice of $\rho_{\slice x}$, so the integral of $\rho_{\slice x}$ with respect to any continuous function is prescribed. The support of $\rho_{\slice x}$ must be contained in $\bar{\M} \cap \ell_x$, which is a one-dimensional interval. The monotone convergence theorem then implies that the measure of $\rho_{\slice x}$ is prescribed on all open sets (and in particular must be finite). This then implies equivalence for all Borel sets \cite[Lemma~7.1.2]{Bogachev07}.
%
%
\end{proof}

\section{Decomposition of response}\label{s:Decomposition}

With these results in hand, we can begin to break apart the susceptibility \eqref{eq:Susceptibility}.

Recalling the definitions of the stable and unstable bundles in Section~\ref{s:Setup}, let the unstable projection operator be 
\[ \P^u(x) = \vu(x) l^u(x) \]
and the stable projection operator be 
\[ \P^s(x) = \vs(x) l^s(x) = \id - \P^u(x). \]

\begin{proposition}\label{p:Decomposition}
	The susceptibility coefficients \eqref{eq:Susceptibility} can be decomposed as 
	\begin{equation}
	\kappa_n = \kappa^s_n + \kappa^X_n + \kappa^\rho_n + \kappa^\P_n
	\end{equation}
	where
	\begin{align}
	\kappa^s_n &= \int_\M \nabla (A\circ f^n) \cdot \P^s X \,\dd \rho \label{eq:KappaS}\\
	\kappa^X_n &= -\int_\M (A \circ f^n)\, l^u (DX)(\vu) \,\dd \rho \label{eq:KappaX}\\
	\kappa^\rho_n &= \int_{\vec \L} \left((A \circ f^n)(q_{\vec I}) (l^u X)(q_{\vec I}) - (A \circ f^n)(p_{\vec I}) (l^u X)(p_{\vec I}) \right)\,\dd\vec\sigma(\vec I) \label{eq:KappaRho}\\
	\kappa^l_n &= - \int_{\vec \L} \left(\int_0^1 (A \circ f^n)(\pi(\vec I,t))\, \dd l^u(t) X(\pi(\vec I,t))\right)\, \dd\vec\sigma(\vec I) \label{eq:KappaL}.
	\end{align}
\end{proposition}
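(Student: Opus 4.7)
The plan is to split $X = \P^u X + \P^s X$, so that $\nabla(A\circ f^n)\cdot X$ decomposes into a stable part, which contributes $\kappa^s_n$ verbatim, and an unstable part
\[ \nabla(A\circ f^n)\cdot\P^u X = \bigl(D_{\vu}(A\circ f^n)\bigr)(l^u X). \]
Recovering $\kappa^X_n+\kappa^\rho_n+\kappa^l_n$ from the unstable part is, at heart, an integration by parts along the unstable leaves of $\rho$.

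To set this up, I would disintegrate $\rho$ via Proposition~\ref{p:LUMIsSegment} and \eqref{eq:Disintegration}, orienting each $I\in\hat\L$ by $q_{\vec I}-p_{\vec I}=|\vec I|\vu$ and parametrizing it by $\pi(\vec I,t)$. The chain rule gives $D_{\vu}(A\circ f^n) = |\vec I|^{-1}\partial_t[(A\circ f^n)\circ\pi(\vec I,\cdot)]$, and the factor $|\vec I|^{-1}$ absorbs into $\dd\hat\rho$ to produce $\dd\hat\sigma$ by definition \eqref{eq:SigmaDefinition}. An integration by parts in $t$ then yields a boundary contribution
\[ \int_{\hat\L}\bigl[(A\circ f^n)(q_{\vec I})(l^u X)(q_{\vec I}) - (A\circ f^n)(p_{\vec I})(l^u X)(p_{\vec I})\bigr]\,\dd\hat\sigma(I), \]
which is antisymmetric under orientation reversal and so equals $\kappa^\rho_n$ since $\dd\vec\sigma(\vec I_\pm)=\pm\tfrac{1}{2}\dd\hat\sigma$ by \eqref{eq:VecSigmaDefinition}, together with an interior contribution of the form $-\int_{\hat\L}\int_0^1(A\circ f^n)\,\partial_t(l^u X)\,\dd t\,\dd\hat\sigma(I)$.

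To split the interior term I would apply a distributional Leibniz rule
\[ \partial_t\bigl[(l^u X)\circ\pi(\vec I,\cdot)\bigr] = \bigl(\dd l^u(t)\bigr)X\circ\pi(\vec I,t) + l^u\,\partial_t[X\circ\pi(\vec I,\cdot)], \]
in which $\dd l^u(t)$ is the Stieltjes differential of $t\mapsto l^u(\pi(\vec I,t))$, regarded as a signed measure on $[0,1]$. The second summand contains only the smooth derivative $\partial_t X = |\vec I|(DX)(\vu)$, and on re-invoking \eqref{eq:Disintegration} with $|\vec I|\dd\hat\sigma = \dd\hat\rho$ it recovers precisely $\kappa^X_n$. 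The first summand is again antisymmetric under orientation reversal, so the same $\hat\sigma\leftrightarrow\vec\sigma$ conversion used for the boundary term rewrites it as $\kappa^l_n$ in the form \eqref{eq:KappaL}.

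The main obstacle is giving the Stieltjes differential $\dd l^u(t)$ a proper meaning along unstable leaves and justifying the Leibniz rule above. Because $\vu$ is constant on each $I\in\hat\L$, the discontinuities of $l^u|_I$ are inherited from those of the stable covector $l^s$, and hence from forward hits of $\ell_\S$ under $f$. I would expect $l^u|_I$ to be of bounded variation for $\hat\rho$-almost every $I$, with total variation controlled by the $\mu^n$-summable stable-direction jumps produced by these forward hits, and I would then need this total variation to be $\hat\sigma$-integrable so that Fubini can be applied throughout. Once these regularity ingredients are in place, the decomposition of Proposition~\ref{p:Decomposition} follows by direct computation.
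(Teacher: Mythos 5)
Your proposal is correct and follows essentially the same route as the paper: split off $\P^s X$, disintegrate along unstable leaves, convert $\dd\hat\rho$ to the $\sigma$-measures via the leaf-length factor, integrate by parts in $t$ to produce the boundary term $\kappa^\rho_n$, and apply the Leibniz rule to split the interior term into $\kappa^l_n$ and $\kappa^X_n$. The regularity ingredients you flag as the remaining obstacle are exactly what the paper supplies (Proposition~\ref{p:lLeafTotalVariation} for the bounded variation of $l^u$ along line segments, via the cylinder complexity bound of Lemma~\ref{l:TopoPressure}, and Proposition~\ref{p:SigmaFinite} for finiteness of $\vec\sigma$, which justifies separating the terms).
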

\begin{proof}
Using $\id = \P^s + \P^u$ we have that 
\begin{equation} \int_\M \nabla (A\circ f^n) \cdot X\, \dd\rho = \kappa^s_n + \int_\M \nabla (A\circ f^n) \cdot \P^u X\,\dd\rho. \label{eq:ResponseSUDecomposition}\end{equation}
Now, using $\P^u(x) = \vu(x) l^u(x)$ and that $\|\vu(x)\| \equiv 1$, we have
\[ \nabla (A\circ f^n) \cdot \P^u X = \vu \cdot \nabla (A\circ f^n) l^u X = \frac{\dd}{\dd \vu}(A \circ f^n) l^u X. \]
We can write the second part of (\ref{eq:ResponseSUDecomposition}) as
\[ \int_\M \nabla (A\circ f^n) \cdot \P^u X\,\dd\rho = \int_{\hat {\L}} \int_0^1 \left(\frac{\dd}{\dd \vu}(A \circ f^n) l^u X\right)(\pi(I,t))\, \dd t\, \dd\hat{\rho}(I), \]
using \eqref{eq:Disintegration} and that the conditional density along unstable manifolds is constant (the situation would not be seriously different if it was non-constant).
Then, since $\vu$ is a unit vector along $\vec I$, $\frac{\dd\phi}{\dd \vu}(\pi(\vec I,t)) = (\vu \cdot (q_{\vec I} - p_{\vec I}))^{-1} \frac{\dd}{\dd t} \phi(\vec \pi(\vec I,t))$ and so, using the definition of $\vec \sigma$ \eqref{eq:VecSigmaDefinition},
\[ \int_\M \nabla (A\circ f^n) \cdot \P^u X\,\dd\rho = \int_{\vec{\L}} \int_0^1 \frac{\dd}{\dd t}(A \circ f^n)(\vec \pi(\vec I,t))\, \left(l^u X\right)(\vec \pi(\vec I,t))\, \dd t\, \dd \vec{\sigma}(\vec I). \]

Doing integration by parts on the inner integral, the right-hand side becomes
\begin{align} \int_{\vec {\L}} \left( (A\circ f^n\ l^u X)(q_{\vec I}) - (A\circ f^n\ l^u X)(p_{\vec I}) - \int_0^1 (A \circ f^n)(\vec \pi(\vec I,t))\, \dd \left(l^u X\right)(\vec \pi(\vec I,t))\,\right)\dd \vec{\sigma}(\vec I), \label{eq:IntegrationByParts} \end{align}
where the inner integral is a Stieltjes integral. By Proposition~\ref{p:lLeafTotalVariation} below in Section~\ref{s:DecayLRho} and the fact that $X$ is $C^2$, we know the Stieltjes integral is well-defined and bounded. The first two terms in \ref{eq:IntegrationByParts} become $\kappa^\rho_n$.

We can expand the inner integrating term on the right-hand side of \eqref{eq:IntegrationByParts} via the product rule so that
\begin{equation} \dd (l^uX)(\vec \pi(\vec I,t)) = \dd l^u(\vec \pi(\vec I,t)) X(\vec \pi(\vec I,t)) + (l^u DX \vu)(\pi(\vec I,t))\, \vu \cdot (q_{\vec I}-p_{\vec I})\, \dd t. \label{eq:ProductRuleLX} \end{equation}
The first term in \eqref{eq:ProductRuleLX} gives $\kappa^l_n$ when separated into its own integral; by rewriting back in terms of the original SRB measure $\rho$, the second term in \eqref{eq:ProductRuleLX} gives $\kappa^X_n$, as required. 

Note that all the integrands in (\ref{eq:KappaX}--\ref{eq:KappaL}) are bounded, and by Proposition~\ref{p:SigmaFinite}, $\vec{\sigma}$ is a finite measure. As a result, the terms $\kappa^X_n, \kappa^\rho_n, \kappa^l_n$ are finite and our separation of them is valid.
\end{proof}

As usual in uniformly hyperbolic systems, the stable contribution $\kappa^s_n$ decays exponentially as $n \to \infty$:
\begin{proposition}\label{p:KappaS}
	There exists a constant $C$ such that for all $n \in \N$,
	\[ | \kappa^s_n | \leq C \| X\|_{L^\infty} \|A\|_{C^1} \mu^n. \]
\end{proposition}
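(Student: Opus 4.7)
The plan is to use the standard hyperbolic argument: split off the scalar $l^uX$-factor (here $l^s X$), recognise the remaining inner product as a directional derivative in the stable direction, and pull the iterates through via the chain rule so that contraction in the stable cone yields the $\mu^n$ factor.

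Concretely, I would first write $\P^s X = (l^s X) \vs$, since $\vs$ is a unit vector and $l^s\vs = 1$. Then
\[
\nabla(A\circ f^n)\cdot \P^s X \;=\; (l^s X)\,\bigl(\vs\cdot \nabla(A\circ f^n)\bigr).
\]
Because $f$ is piecewise affine, $f^n$ is $C^1$ off a $\rho$-null set (the union of preimages of $\ell_\S$ up to time $n-1$), so at $\rho$-a.e.\ $x$ the chain rule gives
\[
\vs(x)\cdot\nabla(A\circ f^n)(x) \;=\; \bigl(D_xf^n\,\vs(x)\bigr)\cdot\nabla A\bigl(f^n(x)\bigr).
\]
Now $Df$ maps the stable cone $\C^s$ into itself along forward orbits (this is just the dual statement of the backward invariance in \eqref{eq:StableCone}), and by the stable contraction hypothesis $\|Df_p w\|\le \mu\|w\|$ whenever $w\in Df_p^{-1}\C^s(f(p))$; iterating, $\|D_xf^n\,\vs(x)\|\le \mu^n$ at $\rho$-a.e.\ $x$. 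Hence
\[
\bigl|\nabla(A\circ f^n)\cdot\P^s X\bigr|(x) \;\le\; |l^s X|(x)\,\mu^n\,\|\nabla A\|_{L^\infty}
\;\le\; C\,\mu^n\,\|X\|_{L^\infty}\,\|A\|_{C^1},
\]
where the final bound uses that $l^s$ is uniformly bounded on $\M$ (the stable cone stays uniformly away from the unstable cone, so the dual basis $(l^u,l^s)$ to $(\vu,\vs)$ has bounded norm). Integrating against $\rho$ and observing $\rho(\M)=1$ gives the result.

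No real obstacle is expected: the only mild subtlety is that $f^n$ fails to be differentiable on a $\rho$-null set, but since all estimates are pointwise and uniform on the complement, dominated convergence justifies the $\rho$-integration without further work. Everything else is genuinely routine, because the delicate parts of the response (the $\kappa^\rho_n$ and $\kappa^l_n$ contributions arising from the singular set) have already been extracted by the decomposition in Proposition~\ref{p:Decomposition}, leaving $\kappa^s_n$ as the classical ``easy'' stable piece.
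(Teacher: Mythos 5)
Your proposal is correct and follows essentially the same route as the paper: bound the stable component of $X$ pointwise (you via $\P^s X = (l^s X)\vs$ and boundedness of $l^s$, the paper via the operator bound $\|\P^s\|\le 1+C_l$, both resting on uniform transversality of the cones), apply the chain rule to move $Df^n$ onto the stable vector, and invoke the $\mu^n$ contraction along the invariant stable direction before integrating against $\rho$. The only cosmetic caveat is that forward invariance holds for the stable \emph{bundle} spanned by $\vs$ rather than for the whole cone $\C^s$, but since you apply the contraction only to $\vs$ this does not affect the argument.
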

\begin{proof}
	We have that $l^u E^s \equiv 0$, and that $E^s \subset C^s$ and $E^u \subset C^u$ are uniformly transverse. Since $\|\vu\|, l^u\vu \equiv 1$ this means that $\|l^u\|$ is bounded by some constant $C_l$. Hence, considered pointwise, $\|\P^s\| = \|\id - \vu l^u\| \leq 1 + C_l$. 
	Thus, using that $Df^n \P^s$ is in the stable cone for all $n$,
	\[ |\kappa^s_n| \leq \sup | \nabla(A \circ f^n) \P^s X | = \sup | (\nabla A) \circ f^n Df^n \P^s X | \leq \|A\|_{C^1} \mu^n (1 + C_l) \|X\|_{L^\infty}. \] 
	as required.
\end{proof}

\section{Decay of $\kappa^X_n$}\label{s:DecayX}

One of the contributions to the susceptibility functions, $\kappa^X_n$, decays exponentially in $n$ if the vector field $l^u DX \vu$ has decay of correlations, which obtains if it lies in the Banach space $\Ht$ from Proposition~\ref{p:DecayOfCorrelations}. In this section we will show this is the case:
 \begin{proposition}\label{p:KappaX}
There exists a constant $C$ such that for all $n \in \N$, $X \in C^2$, $A \in C^1$,
 	\[ | \kappa^X_n - \kappa^X_\infty | \leq C \| X\|_{C^2} \|A\|_{C^1} \xi^n, \]
 	where 
 	\[ \kappa^X_\infty =  - \int_\M A\, \dd \rho \int_\M l^u (DX)(\vu) \,\dd \rho. \]
 \end{proposition}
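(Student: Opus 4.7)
The aim is to invoke Proposition~\ref{p:DecayOfCorrelations} directly, applied to the observable $A$ and the finite signed measure
\[
    \dd\phi \;:=\; -\,l^u(DX)(\vu)\,\dd\rho.
\]
Indeed, if we can show $\phi\in\Ht$ with $\|\phi\|_{\Ht}\leq C\|X\|_{C^2}$, then Proposition~\ref{p:DecayOfCorrelations} gives
\[
    \Bigl|\,\kappa^X_n - \int A\,\dd\rho\int\dd\phi\,\Bigr|\;\leq\;C\|A\|_{C^1}\|\phi\|_{\Ht}\xi^n,
\]
and the limit on the left is exactly $\kappa^X_\infty$ by inspection. So the whole statement reduces to establishing the $\Ht$-bound for $\phi$.

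Since $\rho\in\Ht$ by Proposition~\ref{p:DecayOfCorrelations} and $DX\in C^1$, the only obstruction to applying Lemma~\ref{l:FunctionMultipliers} outright is that $\vu$ and $l^u$ are merely continuous on $\M$: their derivatives fail to exist on the backward (resp.\ forward) orbit of $\ell_{\S}$. The natural remedy is a piecewise approximation exploiting piecewise affinity. For each $N$, define $\vu_N(x)$ as the normalized image under $Df^N$ of a fixed reference vector in $\C^u$ at $f^{-N}(x)$, and $l^u_N$ analogously via $N$ forward iterates. Because $Df$ is locally constant, $\vu_N$ is constant on every length-$N$ initial cylinder $\O^b_\cyi$, and $l^u_N$ is constant on every length-$N$ final cylinder $\O^e_\cyj$; uniform cone contraction yields $\|\vu-\vu_N\|_\infty+\|l^u-l^u_N\|_\infty\leq C\mu^N$. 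One then decomposes
\[
    \phi \;=\; l^u_N(DX)(\vu_N)\rho \;+\; \bigl[\,l^u(DX)(\vu) - l^u_N(DX)(\vu_N)\,\bigr]\rho.
\]
For the main term, on each two-sided cell $\O^b_\cyi\cap\O^e_\cyj$ the function $l^u_N(DX)(\vu_N)$ admits a $C^1$ extension to $\R^2$ of norm bounded by $C\|X\|_{C^2}$ uniformly in $(\cyi,\cyj)$, so Lemmas~\ref{l:CylinderMultipliers} (twice, for initial and final cylinders) and~\ref{l:FunctionMultipliers} control each cell's contribution. The error term has $L^\infty$ norm $O(\mu^N\|X\|_{C^2})$ and is handled by iterating the decomposition and summing a geometric series in $N$.

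\textbf{Main obstacle.} The nontrivial technical point is to reassemble the cellwise bounds without incurring the $\sim 2^{2N}$ cardinality of the length-$N$ two-sided cylinder partition. The expected resolution is a piecewise-multiplier estimate inherent to the Banach space $\Ht$ of \cite{BaladiGouezel10}: multiplication by a function that is uniformly $C^1$ on the cells of a refinement by initial and final cylinders should cost only the supremum of its cellwise $C^1$ norms, up to a universal constant controlled by the cylinder geometry (via Proposition~\ref{p:CylinderIntersections}). Such a piecewise variant of Lemma~\ref{l:FunctionMultipliers} can be extracted from the anisotropic norm construction in \cite{BaladiGouezel10}; once available, it trivializes the sum, the error $O(\mu^N)$ can be absorbed by taking $N\to\infty$, and one concludes $\|\phi\|_{\Ht}\leq C\|X\|_{C^2}$ as required.
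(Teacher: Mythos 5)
Your overall framework is the same as the paper's: write $\kappa^X_n$ as $\int A\circ f^n\,\dd\phi$ with $\phi = -\,l^u(DX)(\vu)\rho$, show $\|\phi\|_{\Ht}\leq C\|X\|_{C^2}$, and conclude by Proposition~\ref{p:DecayOfCorrelations}. That reduction is exactly Lemma~\ref{l:UnstableDivXSobolevSpace} plus the short argument at the end of Section~\ref{s:DecayX}, and it is correct. The gap is in how you propose to prove the multiplier bound, and you have in fact put your finger on it yourself: the ``piecewise-multiplier estimate'' you invoke --- that multiplication by a function which is uniformly $C^1$ on the cells of the two-sided length-$N$ cylinder partition costs only the supremum of the cellwise $C^1$ norms, with a constant independent of $N$ --- is not available from Lemmas~\ref{l:FunctionMultipliers}--\ref{l:CylinderMultipliers} and should not be expected to hold: the number of cells $\O^e_{\cyi}\cap\O^b_{\cyj}$ met by a single admissible (stable) leaf grows with $N$, so any such constant degrades with $N$. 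Moreover your error term $[\,l^u(DX)\vu - l^u_N(DX)\vu_N\,]\rho$ is only controlled in $L^\infty$, and $L^\infty$ smallness of the density does not bound the $\Ht$ norm of the corresponding distribution; ``iterating the decomposition'' to fix this is precisely where the real argument lives and is left unsaid. (Minor slip: $\vu_N$, being determined by the backward itinerary, is constant on \emph{final} cylinders and $l^u_N$ on \emph{initial} ones, the reverse of what you wrote.)

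The paper's resolution is a telescoping (two-parameter) decomposition rather than a single-scale approximation: it writes $l^u W \vu$ as $t_{(0,0)}$ plus sums of increments $t_{(\alpha+1,\beta)}-t_{(\alpha,\beta)}$, $t_{(\alpha,\beta+1)}-t_{(\alpha,\beta)}$ and double differences, where each increment is piecewise $C^1$ on cylinders of the corresponding length with \emph{cellwise} $C^1$ norm bounded by $\sup_{\O}\nu_\beta$ with $\nu_\beta\leq(\mu/\lambda)^\beta$ (Lemmas~\ref{l:RightIteratedDifferences}--\ref{l:LeftIteratedDifferences}), not merely $\mu^\beta$. The sum over the $2^\beta$ cylinders of these cellwise bounds is then controlled by the pressure-type estimate of Lemma~\ref{l:TopoPressure} (using $2\mu/\lambda<1$), after which Lemmas~\ref{l:FunctionMultipliers} and~\ref{l:CylinderMultipliers} are applied increment by increment. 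In other words, the exponential cell count is beaten by per-cell smallness of the increments, not by a cell-count-free multiplier lemma. Without this mechanism --- in particular without the sharper rate $\nu_N$ in place of $\mu^N$ and without an $\Ht$ (rather than $L^\infty$) bound on the remainder --- your argument does not close.
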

Since $\rho$ lies in $\Ht$ (see the Appendix) the main step is to show that $l^u (DX)(\vu)$ is a bounded multiplier of functions in $\Ht$. Since $DX \in C^1$ this means we must show that $l^u$ and $\vu$ are bounded multipliers:
\begin{lemma}\label{l:UnstableDivXSobolevSpace}
	There exists $C$ such that for $W$ a $C^1$ tensor field,
	\[ \| l^u W \vu \phi \|_{\Ht} \leq C \| W \|_{C^1} \| \phi \|_{\Ht}. \]
\end{lemma}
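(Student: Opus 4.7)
The plan is to exploit the fact that although $\vu$ and $l^u$ are not globally $C^1$, they have a dynamically adapted piecewise-smooth structure: on each final cylinder $\O^e_\cyi$ of length $N$, the Lozi map's uniform hyperbolicity determines $\vu$ and $l^u$ up to an error of size $\O(\theta^N)$ for some $\theta \in (0,1)$ controlled by the cone-contraction rate of $Df^{-1}$. The task reduces to converting this piecewise regularity into a bounded multiplier estimate on $\Ht$ via Lemmas~\ref{l:FunctionMultipliers} and~\ref{l:CylinderMultipliers}.

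I would first build finite-past approximations $\vu_N$ and $l^u_N$ depending only on the symbols $\sign(f^{-k}(\cdot))$ for $k = 1,\dots,N$. Piecewise affineness of $f$ means these are obtained by applying the fixed matrix product $A_{\cyi_1}\cdots A_{\cyi_N}$ of $Df$-matrices to a reference vector (covector) and normalising, yielding $\|\vu - \vu_N\|_\infty + \|l^u - l^u_N\|_\infty \leq C\theta^N$ from uniform cone contraction. Since $\vu_N$ and $l^u_N$ are each constant on every length-$N$ final cylinder, the restriction of $l^u_N W \vu_N$ to such a cylinder is $C^1$ in $x$ and extends globally to a $C^1$ function of norm $\O(\|W\|_{C^1})$.

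Next I would use the telescoping decomposition
\begin{equation*}
l^u W \vu = l^u_0 W \vu_0 + \sum_{N=0}^\infty \bigl( l^u_{N+1} W \vu_{N+1} - l^u_N W \vu_N\bigr),
\end{equation*}
in which the $N$th summand is supported on the disjoint union of length-$(N+1)$ final cylinders and, on each such cylinder, agrees with a globally-$C^1$ function of norm $\O(\theta^N\|W\|_{C^1})$. This expresses $l^u W \vu$ as a rapidly convergent series of piecewise-$C^1$ functions with explicit geometric support.

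The main obstacle is to bound each summand as a multiplier on $\Ht$ without incurring a $2^{N+1}$ combinatorial factor from the number of length-$(N+1)$ cylinders, which need not be beaten by $\theta^N$ in general parameter ranges. I would address this by a refined multiplier estimate treating each summand as a single function: using convexity of initial and final cylinders (Proposition~\ref{p:CylinderIntersections}), any line in the stable cone meets the jump set of the $N$th summand in at most $\O(N)$ components, with uniformly $C^1$ pieces in between of $C^1$-norm $\O(\theta^N\|W\|_{C^1})$. Inserted into the line-integral description of the $\Ht$-norm from \cite[Lemma~4.2]{BaladiGouezel10}, adapting the argument behind Lemma~\ref{l:CylinderMultipliers}, this yields a per-summand multiplier bound of $\O(N\theta^N\|W\|_{C^1})$, which sums to give the required estimate.
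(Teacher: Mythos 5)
Your overall strategy---approximate $\vu$ and $l^u$ by locally constant fields adapted to cylinders, telescope, and control each increment via Lemmas~\ref{l:FunctionMultipliers} and~\ref{l:CylinderMultipliers}---is the right one and matches the paper's. But there is a concrete error in the setup: you build \emph{both} approximants from the backward itinerary $\sign(f^{-k}(\cdot))$, $k=1,\dots,N$, so that both are constant on final cylinders $\O^e_\cyi$. This is correct for $\vu$, whose direction is determined by the backward orbit (push a reference vector in $\C^u$ forward by $Df^N$ and normalise). It is wrong for $l^u$: up to normalisation, $l^u$ is the annihilator of the stable direction $E^s$, which is determined by the \emph{forward} orbit; applying the adjoints of the $Df$-matrices along the backward itinerary to a reference covector converges to the annihilator of $E^u$, i.e.\ to the stable covector, not to $l^u$. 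Consequently the covector approximant must be constant on \emph{initial} cylinders $\O^b_\cyj$ (of length $\alpha+1$, since the stable cone itself jumps across $\ell_{\S}$), and a single telescoping sum indexed by one depth $N$ cannot capture both fields. The paper therefore uses a double telescoping over a backward depth $\beta$ for $\vv\bh$ and a forward depth $\alpha$ for $\hat l\bg$, with increments supported on intersections $\O^e_\cyi\cap\O^b_\cyj$, a double-difference bound for the mixed terms, and an explicit quotient identity to handle the fact that the normalisation $l^u=\hat l^u/(\hat l^u\vu)$ couples the two indices.

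Your final step also has a gap. The worry about the $2^{N+1}$ cylinder count is reasonable to raise, but your proposed fix---that a line in the stable cone meets the jump set of the $N$th increment in only $\mathcal{O}(N)$ components---is unsubstantiated and almost certainly false: that count is the number of distinct length-$N$ cylinders the line traverses, which is governed by the complexity growth of $\bigcup_{k\leq N} f^k(\ell_{\S})$ and is generically exponential with a small rate, not linear. The correct resolution is much simpler: the increments are bounded not by a generic projective contraction rate $\theta^N$ but by $\nu_N\leq(\mu/\lambda)^N$, and $2\mu\lambda^{-1}<1$ for all parameters considered (Lemma~\ref{l:TopoPressure}), so the crude sum over all $2^N$ cylinders already converges geometrically.
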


We will need to characterise the regularity of the unstable vector and covector bundles, and in particular where and how big their jumps are. To do this, we will construct piecewise constant functions $\{\vv\bh\}_{\beta\in\N}$ and $\{\hat l\bg\}_{\alpha\in\N}$ which converge to covariant vector bundles as $\beta,\alpha \to \infty$. The following propositions achieve this. 

For concision let us define $\nu_\beta(x) := \mu_\beta(f^\beta(x)) \lambda_\beta(x)^{-1}$, which is bounded by $\mu^\beta \lambda^{-\beta}$.
\begin{lemma}\label{l:PushforwardDifferences}
	Suppose $\vv_{(0)}$ is a unit vector field on $\M$ such that $\vv_{(0)} \in \C^u$. Then, there exists a constant $C$ such that for all $\beta \in \N$,
	\begin{equation} \left| D_x f^\beta \vv_{(0)}(x) / \| D_x f^\beta \vv_{(0)}(x) \| - \vu(f^\beta(x)) \right| \leq C \nu_\beta(x). \label{eq:PushforwardDifferences} \end{equation}
\end{lemma}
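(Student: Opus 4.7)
The plan is to expand $\vv_{(0)}(x)$ in the invariant basis $\{\vu(x), \vs(x)\}$, push it forward by $Df^\beta$, and then compare the normalized image with $\vu(f^\beta(x))$.

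First, I would write $\vv_{(0)}(x) = c_u \vu(x) + c_s \vs(x)$. Because $\vv_{(0)}(x)$ is a unit vector lying in $\C^u$ while $\vs(x) \in \C^s$, and the two cones have uniformly disjoint interiors by the construction in Section~\ref{s:Setup}, the angle between $\vv_{(0)}(x)$ and $\vs(x)$ is bounded below by a positive constant independent of $x$. This yields uniform bounds $|c_u| \geq c_0 > 0$ and $|c_s| \leq C_0$.

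Next, I would push forward using the covariance of the invariant bundles. From the definitions of $\lambda_\beta$ and $\mu_\beta$ combined with $Df$-invariance of $\vu$ and $\vs$ (up to scaling), one has $D_xf^\beta \vu(x) = \lambda_\beta(x)\,\vu(f^\beta(x))$ and $D_xf^\beta \vs(x) = \mu_\beta(f^\beta(x))\,\vs(f^\beta(x))$. Hence
\begin{equation*}
D_x f^\beta \vv_{(0)}(x) = c_u\, \lambda_\beta(x)\,\vu(f^\beta(x)) + c_s\, \mu_\beta(f^\beta(x))\,\vs(f^\beta(x)).
\end{equation*}
Normalizing gives $w_\beta(x) := D_x f^\beta \vv_{(0)}(x)/\|D_x f^\beta \vv_{(0)}(x)\| = a_\beta\,\vu(f^\beta(x)) + b_\beta\,\vs(f^\beta(x))$ with $|b_\beta/a_\beta| = |c_s/c_u|\,|\nu_\beta(x)| \leq C_1\,|\nu_\beta(x)|$ for some uniform $C_1$.

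Finally, I would translate this ratio bound into a Euclidean distance bound. Both $w_\beta(x)$ and $\vu(f^\beta(x))$ lie in $\C^u$ and have strictly positive $e_1$-component (by forward invariance of $\C^u$ and the sign convention defining $\vu$), so $a_\beta > 0$. Combined with $\|w_\beta(x)\| = 1$ and the uniform lower bound on the angle between $\vu$ and $\vs$, a short algebraic computation yields $|a_\beta - 1| \leq C_2 |b_\beta| \leq C_2 C_1\,|\nu_\beta(x)|$, whence the triangle inequality gives
\begin{equation*}
|w_\beta(x) - \vu(f^\beta(x))| \leq |a_\beta - 1| + |b_\beta| \leq C\,|\nu_\beta(x)|,
\end{equation*}
with $C$ depending only on the cone parameter $c_{\rm cone}$ and on $\lambda, \mu$. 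No step is really a serious obstacle; the only point requiring genuine care is the uniform lower bound on $|c_u|$, which is immediate from the uniform transversality of the invariant cones.
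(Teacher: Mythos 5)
Your proposal is correct and follows essentially the same route as the paper: decompose $\vv_{(0)}$ into an unstable component of uniformly nonzero size plus a uniformly bounded stable component (the paper writes this as $\vv_{(0)} = k\,\vu + v$ with $|k|\geq c$ and $v$ contracting under $Df^\beta$), push forward so the stable part is $O(\mu^\beta)$ against an unstable part of size at least $c\lambda^\beta$, and convert the resulting ratio $\nu_\beta$ into a bound on the normalized difference. The only cosmetic difference is that the paper handles the last normalization step via the generic inequality $\bigl\|\psi/\|\psi\| - \chi/\|\chi\|\bigr\| \leq 2\|\psi\|^{-1}\|\psi-\chi\|$ rather than your explicit computation of the coefficients $a_\beta, b_\beta$ in the $(\vu,\vs)$ basis.
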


\begin{proof}[Proof of Lemma~\ref{l:PushforwardDifferences}]
	Because $\vv_{(0)},\vu \in \C^u$, they are both transverse to stable directions: as a result, there exist constants $C, c$ independent of $a, b$ such that for some scalar field $\|k(x)\| \geq c$ we have $v(x) := a(x) - k(x) \vu(x)$ lies in the unstable cone and $\|v(x)\| \leq C$.
	Then, 
	\[ (D_xf^\beta \vv_{(0)})(x) = k(x) D_xf^\beta \vu(x) + D_xf^\beta v(x),\]
	and in particular,
	\begin{align*} \| D_xf^\beta \vv_{(0)}(x) -  k(x) Df^\beta \vu(x) \| &\leq \| D_xf^\beta v(x) \|\\
	& \leq |\mu_\beta(f^\beta(x))| \| v(x) \| \\
	&\leq C |\mu_\beta(f^\beta(x))| \leq C \mu^\beta.\end{align*}
	Furthermore
	\[ \| k(x) Df^\beta \vu(x)\| \geq c |\lambda_\beta(x)| \geq c \lambda^\beta. \]
	It is not hard to show that
	\[ \tfrac{\psi}{\| \psi \|} - \tfrac{\chi}{\|\chi \|} = \|\psi\|^{-1} \left(\psi-\chi + (\|\chi\| - \|\psi\|)\tfrac{\chi}{\|\chi\|}\right),  \]
	so using the reverse triangle inequality,
	\[ \left\|\tfrac{\psi}{\| \psi \|} - \tfrac{\chi}{\|\chi \|}\right\| \leq \|\psi\|^{-1} \left(\|\psi-\chi\| + \left|\|\chi\| - \|\psi\|\right|\right) \leq 2 \|\psi\|^{-1} \| \psi-\chi \|.  \]
	From this we have, recalling that $\vu(f^\beta(x)) = D_xf^\beta \vu(x) / \|D_xf^\beta \vu(x) \|$, that
	\begin{align*}
	 \left\| \frac{D_x f^\beta \vv_{(0)}(x)}{\| D_x f^\beta \vv_{(0)}(x) \|} - \vu(f^\beta(x)) \right\| &\leq 2 \| D_x f^\beta \vu(x) \| \| k(x)^{-1} D_xf^\beta v(x) \| \\
	 &\leq 2 c^{-1} C |\lambda^{-\beta}(x) \mu^\beta(f^\beta(x))|, \end{align*}
	as required.
\end{proof}

\begin{lemma}\label{l:RightIteratedDifferences}
	Suppose $\vv_{(0)}$ is a locally constant unit vector field on $\M \backslash \S$ such that $\vv_{(0)} \in \C^u$ and let $\vv\bh(f^\beta(x)) := D_x f^\beta \vv_{(0)}(x) / \| D_x f^\beta \vv_{(0)}(x) \|$ be unit vector fields on $\beta$ also. Then, $\vv\bh \to \vu$ uniformly, and there exists a constant $C$ such that
\begin{equation*} \left| \vv\bhh(x) - \vv\bh(x) \right| \leq 2C \nu_\beta(f^{-\beta}(x)) \end{equation*}
with $\vv\bh$ piecewise constant on all final cylinders of length $\beta$.
\end{lemma}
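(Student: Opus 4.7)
The plan has three ingredients, which I would address in the following order: piecewise constancy, uniform convergence, and the incremental bound.

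\textbf{Piecewise constancy.} The key observation is that on each initial cylinder $\O^b_\cyi$ of length $\beta$, the map $f^\beta$ is affine: by construction $\O^b_\cyi \subset \M_{\cyi_1}$ and $f^{m-1}(\O^b_\cyi) \subset \M_{\cyi_m}$ for $1\le m \le \beta$, so no iterate hits $\ell_\S$ in the interior, and $f$ is affine on each $\M_\pm$. Consequently $D_x f^\beta$ is constant on $\O^b_\cyi$. Moreover $\vv_{(0)}$ is constant on $\M_{\cyi_1} \supset \O^b_\cyi$ by assumption. Hence $D_x f^\beta \vv_{(0)}(x)$, and therefore its normalisation, is constant on $\O^b_\cyi$, which gives constancy of $\vv\bh$ on the final cylinder $\O^e_\cyi = f^\beta(\O^b_\cyi)$.

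\textbf{Uniform convergence.} Since $\vv_{(0)}$ restricted to either $\M_+$ or $\M_-$ is a unit vector field in $\C^u$, Lemma~\ref{l:PushforwardDifferences} applies separately on each piece. It yields
\[ \bigl| \vv\bh(f^\beta(y)) - \vu(f^\beta(y)) \bigr| \leq C\,\nu_\beta(y), \]
i.e.\ $\|\vv\bh - \vu\|_\infty \leq C \mu^\beta \lambda^{-\beta}$, which goes to zero uniformly.

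\textbf{The incremental bound.} Writing $y := f^{-\beta}(x)$ and $z := f^{-\beta-1}(x) = f^{-1}(y)$, the chain rule gives
\[ D_z f^{\beta+1}\,\vv_{(0)}(z) = D_y f^\beta \bigl( D_z f\,\vv_{(0)}(z) \bigr). \]
Let $w := D_z f\,\vv_{(0)}(z) / \|D_z f\,\vv_{(0)}(z)\|$. Since $Df$ maps $\C^u$ strictly into $\C^u$, $w$ is a unit vector lying in $\C^u(y)$. Thus $\vv\bhh(x)$ is the normalisation of $D_y f^\beta w$, and I can apply Lemma~\ref{l:PushforwardDifferences} pointwise at $y$ (its proof only uses that the initial vector is a unit vector in the unstable cone, so it applies to the single vector $w$) to conclude
\[ \bigl| \vv\bhh(x) - \vu(x) \bigr| \leq C\,\nu_\beta(y). \]
The same lemma applied to $\vv_{(0)}$ at $y$ gives
\[ \bigl| \vv\bh(x) - \vu(x) \bigr| \leq C\,\nu_\beta(y), \]
and the triangle inequality finishes the job with the stated constant $2C$.

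\textbf{Anticipated obstacle.} The only non-routine point is justifying that Lemma~\ref{l:PushforwardDifferences} may be invoked with the non-smooth, pointwise-defined vector $w$ rather than a globally defined unit vector field: I would handle this either by inspecting the proof of Lemma~\ref{l:PushforwardDifferences} (whose argument is genuinely pointwise in $x$) or by extending $w$ to an arbitrary unit vector field in $\C^u$ that takes value $w$ at $y$, which suffices since the resulting estimate depends only on $w(y)$. Everything else is a direct chain-rule computation plus the triangle inequality.
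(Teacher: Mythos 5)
Your proof is correct and follows essentially the same route as the paper: Lemma~\ref{l:PushforwardDifferences} plus the triangle inequality for the increment, and affinity of $f^\beta$ (together with constancy of $\vv_{(0)}$) on initial cylinders for the piecewise constancy claim. The only, harmless, variation is in the increment step, where you re-express $\vv\bhh$ as a $\beta$-step pushforward of the once-iterated unit vector $w\in\C^u$ so that both triangle-inequality terms are bounded by $C\nu_\beta(f^{-\beta}(x))$, whereas the paper simply applies the previous lemma at levels $\beta$ and $\beta+1$ and uses that $\nu_{\beta+1}(f^{-\beta-1}(x))\leq\nu_\beta(f^{-\beta}(x))$.
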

\begin{proof}
	From the previous lemma, 
	\[ \| \vv\bh(x) - \vu(x) \| \leq C \nu_\beta(f^{-\beta}(x)), \]
	and by applying the triangle inequality to $\vv\bh(x) - \vv\bhh(x)$ the equation holds. On a final cylinder of length $\beta$ the constant initial vector field $\vv_{(0)}$ is pushed forward by the Jacobian of $f^\beta$, which is constant on the cylinder, proving the last claim.
\end{proof}

Let us define a unit unstable covector bundle $\hat l^u \propto l^u$ such that $\| \hat l^u \| \equiv 1$ and $\hat l^u \cdot e_1 > 0$. A similar result holds for the left eigenfunctionals.
\begin{lemma}\label{l:LeftIteratedDifferences}
	Suppose $\hat l_{(0)}$ is a locally constant covector field on $\M \backslash f(\S)$ such that $\hat l_{(0)} \in \C^s$ and let $\hat l\bg(f^{-\alpha}(x)) := (D_x f^{-\alpha})* \hat l_{(0)}(x) / \| (D_x f^{-\alpha})* \hat l_{(0)}(x) \|$ be unit vector fields on $\M$ also. Then, $\hat l\bg \to \hat l^u$ uniformly and there exists a constant $C$ such that
	\begin{equation*} \left| \hat l\bg(x) - \hat l\bgg(x) \right| \leq 2C \nu_\alpha(x). \end{equation*}
	with $\hat l\bg$ piecewise constant on all initial cylinders of length $\alpha+1$.
\end{lemma}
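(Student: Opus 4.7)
The plan is to mirror the proof of Lemma~\ref{l:RightIteratedDifferences}, with forward pushforward of vectors replaced by the $(Df^\alpha)^{*}$ pullback of covectors.

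First I would establish the covector analogue of Lemma~\ref{l:PushforwardDifferences}: for any unit covector field $\hat l_{(0)}$ in the stable-covector cone,
\[
\left\|\frac{(D_y f^\alpha)^{*}\,\hat l_{(0)}(f^\alpha(y))}{\|(D_y f^\alpha)^{*}\,\hat l_{(0)}(f^\alpha(y))\|} - \hat l^u(y)\right\| \leq C\,\nu_\alpha(y).
\]
The idea is to decompose $\hat l_{(0)} = k\,\hat l^u + \hat m$ with $k := \hat l_{(0)}(\vu)/\hat l^u(\vu)$ chosen so that $\hat m(\vu) \equiv 0$ (equivalently, $\hat m$ is proportional to $l^s$ pointwise). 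The cone hypothesis becomes the uniform bounds $|k| \geq c > 0$ and $\|\hat m\| \leq C$. From the vector identities $D_y f^\alpha\,\vu(y) = \lambda_\alpha(y)\,\vu(f^\alpha(y))$ and $D_y f^\alpha\,\vs(y) = \mu_\alpha(f^\alpha(y))\,\vs(f^\alpha(y))$ one verifies the dual identities
\[
(D_y f^\alpha)^{*}\,l^u(f^\alpha(y)) = \lambda_\alpha(y)\,l^u(y), \qquad (D_y f^\alpha)^{*}\,l^s(f^\alpha(y)) = \mu_\alpha(f^\alpha(y))\,l^s(y).
\]
Hence the pullback of $\hat l_{(0)}$ splits into an $l^u(y)$-piece of norm comparable to $|\lambda_\alpha(y)|$ and an $l^s(y)$-piece of norm at most $C|\mu_\alpha(f^\alpha(y))|$. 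Their ratio is $O(\nu_\alpha(y))$, and the same $\psi/\|\psi\|$ versus $\chi/\|\chi\|$ normalisation bound used in Lemma~\ref{l:PushforwardDifferences} delivers the claim.

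The stated bound $|\hat l\bg(x) - \hat l\bgg(x)| \leq 2C\,\nu_\alpha(x)$ then follows by triangle inequality, applying the preceding estimate with exponents $\alpha$ and $\alpha+1$. Here one uses the cocycle $\nu_{\beta+1}(x) = \nu_\beta(x)\,\mu_1(f^{\beta+1}(x))/\lambda_1(f^\beta(x))$ together with $|\mu_1/\lambda_1| \leq \mu/\lambda < 1$ to conclude $|\nu_{\alpha+1}(x)| \leq |\nu_\alpha(x)|$. Uniform convergence $\hat l\bg \to \hat l^u$ is immediate since $|\nu_\alpha| \leq (\mu/\lambda)^\alpha \to 0$. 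Piecewise constance on initial cylinders of length $\alpha+1$ is direct: on such a cylinder each factor in the chain-rule decomposition $D_y f^\alpha = Df_{\cyi_\alpha} \cdots Df_{\cyi_1}$ is a fixed affine piece (so $(D_y f^\alpha)^{*}$ is constant), and the image $f^\alpha(y)$ lies on the side of $f(\ell_{\S}) = \R \times \{0\}$ determined by the sign of $\cyi_\alpha$ (since the second coordinate of $f(p)$ is $bp_1$), hence within a single component of $\M \setminus f(\S)$ on which $\hat l_{(0)}$ is constant.

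I do not anticipate a real obstacle: no new hyperbolic estimate is required, and the argument is a direct dualisation of Lemma~\ref{l:RightIteratedDifferences}. The only delicate points are bookkeeping of adjoints and sign conventions (such as $\hat l^u \cdot e_1 > 0$), and specifying that ``$\hat l_{(0)} \in \C^s$'' should be read as the uniform lower bound $|\hat l_{(0)}(\vu)| \geq c > 0$, dual to $v \in \C^u$ forcing $|v \cdot e_1| \geq c$ for unit $v$.
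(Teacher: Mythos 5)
Your proposal is correct and is essentially the argument the paper intends: the paper itself gives no written proof of this lemma, deferring to the analogy with Lemmas~\ref{l:PushforwardDifferences}--\ref{l:RightIteratedDifferences}, and your dualisation (splitting $\hat l_{(0)}$ into an $l^u$ component bounded below and an $l^s$ component, using $(D_yf^\alpha)^*l^u = \lambda_\alpha l^u$ and $(D_yf^\alpha)^*l^s = \mu_\alpha\circ f^\alpha\, l^s$, then normalising) is exactly that analogue, with the triangle-inequality step and the cylinder bookkeeping matching the paper's treatment of $\vv\bh$. Your reading of the hypothesis $\hat l_{(0)} \in \C^s$ as uniform transversality of $\ker \hat l_{(0)}$ to the unstable cone is also the right one.
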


The cylinders must be of length $\alpha+1$ rather than $\alpha$ as might be expected simply as a result of the fact that the stable cone is discontinuous across the critical line (see its definition \ref{eq:StableCone} and Figure~\ref{fig:Cones}).

The final ingredient needed to prove Lemma~\ref{l:UnstableDivXSobolevSpace} is the following complexity bound:
\begin{lemma}\label{l:TopoPressure}
	There exists $C$ and $\zeta < 1$ such that for all $n \in \N$
	\begin{equation}
	\sum_{\cyi\in \Sigma^n: \O^b_\cyi \neq \emptyset} \sup_{x \in \O^b_\cyi} \nu_n(x) \leq C \zeta^n.
	\end{equation}
\end{lemma}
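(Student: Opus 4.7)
The plan is to combine the pointwise hyperbolicity bound on $\nu_n$ with the trivial two-letter complexity bound, reducing the lemma to a simple parameter-level inequality.

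Uniformly in $x \in \M$, $\lambda_n(x) \geq \lambda^n$ and $\mu_n(f^n(x)) \leq \mu^n$ from the setup of Section~\ref{s:Setup}, giving the pointwise bound $\nu_n(x) \leq (\mu/\lambda)^n$. Since $|\Sigma| = 2$, there are at most $2^n$ initial cylinders of length $n$, so
\[
\sum_{\cyi \in \Sigma^n : \O^b_\cyi \neq \emptyset} \sup_{x \in \O^b_\cyi} \nu_n(x) \leq (2\mu/\lambda)^n,
\]
reducing the claim to showing $2\mu < \lambda$ for an appropriate choice of $\lambda$ and $\mu$.

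For the Lozi map, $|\det Df| \equiv b$ and $Df$ is piecewise constant, so by shrinking the invariant cones towards the true stable and unstable bundles of the cocycle (e.g.\ by iterating $Df$ on $\C^u$ and $Df^{-1}$ on $\C^s$), the cone rates $\lambda, \mu$ can be made arbitrarily close to the Lyapunov values $\lambda_\infty := (a + \sqrt{a^2+4b})/2$ and $\mu_\infty := b/\lambda_\infty$. It thus suffices to check $2b < \lambda_\infty^2$. Since the standing condition $b < a - \sqrt{2}$ together with $\sqrt{2} > 1$ forces $a > 1 + b$, we get $\lambda_\infty^2 \geq a^2 > (1+b)^2 = 1 + 2b + b^2 > 2b$, and the lemma follows with $\zeta = 2\mu/\lambda < 1$ and $C = 1$.

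The only technical matter, rather than a serious obstacle, is this cone-tightening: the cones specified in Section~\ref{s:Setup} may not out of the box satisfy $2\mu < \lambda$, but producing tighter $Df$-invariant subcones whose expansion/contraction rates approximate $\lambda_\infty$ and $\mu_\infty$ is a standard consequence of uniform hyperbolicity, and the resulting modification of constants does not affect the qualitative conclusion of exponential decay.
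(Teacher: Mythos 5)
Your core argument is exactly the paper's: there are at most $2^n$ nonempty initial cylinders, $\nu_n(x)\leq(\mu/\lambda)^n$ uniformly, so the sum is at most $(2\mu/\lambda)^n$, and everything reduces to the parameter condition $2\mu<\lambda$. The paper's proof is precisely this one-liner, and simply asserts that $2\lambda^{-1}\mu<1$ holds for the parameters considered.

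Where you go further --- trying to actually verify $2\mu<\lambda$ --- your argument has a flaw. Tightening the invariant cones does \emph{not} let you take the uniform one-step rates $\lambda,\mu$ arbitrarily close to the eigenvalue data $\lambda_\infty=(a+\sqrt{a^2+4b})/2$ and $b/\lambda_\infty$ of a single branch derivative. The Lozi cocycle is a product of the two distinct matrices $Df|_{\M_\pm}=\bigl(\begin{smallmatrix}\mp a&1\\b&0\end{smallmatrix}\bigr)$, and the best achievable uniform per-step expansion over any invariant cone is controlled by the worst mixed product, not by $\lambda_\infty$. Concretely, the alternating product $Df|_{\M_-}\,Df|_{\M_+}$ has trace $-(a^2-2b)$ and determinant $b^2$, hence leading eigenvalue of modulus $\bigl((a^2-2b)+\sqrt{(a^2-2b)^2-4b^2}\bigr)/2$, which is strictly less than $\lambda_\infty^2$; its expanding eigendirection lies in the unstable cone, so any valid $\lambda$ satisfies $\lambda^2\leq\bigl((a^2-2b)+\sqrt{(a^2-2b)^2-4b^2}\bigr)/2$. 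Your chain $\lambda^2\approx\lambda_\infty^2\geq a^2>(1+b)^2>2b$ therefore does not establish $2b<\lambda^2$. (Under the stronger standing hypothesis $b<a-\sqrt2$ one can check $a^2>4.5b$, which does make the alternating product satisfy $2b<\lambda^2$; but a complete verification must control all products, or else use the explicit cone constants of Section~\ref{s:Setup} to bound the one-step expansion directly.) So: same approach as the paper, correct modulo the parameter condition, but the cone-tightening justification of that condition should be replaced by an explicit estimate of the one-step expansion on the cone $\C^u$.
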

\begin{proof}
	Since $|\O^b_\cyi|\leq 2^\cyi$, this holds simply if $2 \lambda^{-1} \mu < 1$, which is the case on all parameters $(a,b)$ we consider.
\end{proof}
Let us remark that, as is standard in thermodynamical formalisms, we would generically expect the topological pressure for the SRB measure potential to be zero, which is to say that
\[ \lim_{n \to \infty} \frac{1}{n} \log \sum_{\cyi\in \Sigma^n: \O^b_\cyi \neq \emptyset} \sup_{x \in \O^b_\cyi} | \lambda_n(x)^{-1} | = 0. \]
This would imply Lemma~\ref{l:TopoPressure} for $\zeta \in (\mu,1)$.

\begin{proof}[Proof of Lemma~\ref{l:UnstableDivXSobolevSpace}]
From Lemmas~\ref{l:RightIteratedDifferences}--\ref{l:LeftIteratedDifferences} we have that
\begin{align} \vu &= \vv_{(0)} + \sum_{\beta=1}^\infty \vv\bhh - \vv\bh, \label{eq:RightVectorSum}\\
\hat l^u &= \hat l_{(0)} + \sum_{\beta=0}^\infty \hat l\bhh - \hat l\bh,
\end{align}
for appropriate vector fields $\vv_{(0)} \in C^1(\M), \hat l_{(0)} \in C^1(\M \backslash f(\S))$, which we can choose to be piecewise constant on the connected components of their domains. This convergence occurs in $L^\infty$. 

Now, we expect that (again in $L^\infty$, recalling that $\hat l \vu, \hat l\bh \vv\bh \geq c$ for some $c$)
\[ l^u W \vu = \frac{\hat l^u W \vu}{\hat l^u \vu} = \lim_{\alpha,\beta\to\infty} t\bgh, \]
where
\[ t\bgh := \frac{\hat l\bg W \vv\bh}{\hat l\bg \vv\bh}. \]

We will now show that as $\alpha,\beta \to \infty$, the $t\bgh$, considered as multiplication operators on $\Ht$, has a unique, bounded limit. The way we will do that is by writing
\begin{align*} t_{(\hat\alpha,\hat\beta)} &= t_{0,0} + \sum_{\alpha=0}^{\hat\alpha-1} (t_{(\alpha+1,0)} - t_{(\alpha,0)}) + \sum_{\beta=0}^{\hat\beta-1} (t_{(0,\beta+1)} - t_{(0,\beta)}) \\
	& \quad + \sum_{\alpha = 0}^{\hat\alpha-1} \sum_{\beta = 0}^{\hat\beta-1} (t_{(\alpha+1,\beta+1)} - t_{(\alpha+1,\beta)} - t_{(\alpha,\beta+1)} + t\bgh) \end{align*}
and showing these series are exponentially convergent as multiplication operators on $\Ht$.

From Lemma~\ref{l:RightIteratedDifferences} we have that any for any $\cyi \in \Sigma^{\beta}$, our $\vv_{(k)}, k \leq \beta$ is constant on the final cylinder $\O^e_{\cyi}$. 
Similarly from Lemma~\ref{l:LeftIteratedDifferences}, for any $\cyj \in \Sigma^{\beta+1}$ we have that any $\hat l_{(k)}, k \leq \beta$ is constant on the final cylinder $\O^b_{\cyj}$. The functions $t\bgh$, being functions of $\hat l\bg, \vv\bh$ and a $C^1$ vector field $W$, can therefore be written as follows:
\begin{equation} t\bgh = \sum_{\cyi \in \Sigma^\beta} \sum_{\cyj \in \Sigma^{g+1}} \mathbb{1}_{\O^e_{\cyi} \cap \O^b_{\cyj}} t\bh |_{\O^e_{\cyi}\cap\O^b_{\cyj}}, \label{eq:tnCylinderSum}\end{equation}
where the restriction of $t\bgh$ in the summands is $C^1$ with a $C^1$ extension $t\bgh^{\cyi,\cyj}$ to the whole of $\M$.

Now,
\begin{equation} t_{(\alpha,\beta+1)} - t\bgh = \frac{\hat l\bg \vv\bh\, \hat l\bg W (\vv\bhh - \vv\bh) - \hat l\bg W \bh\, \hat l\bg (\vv\bhh - \vv\bh)}{\hat l\bg \vv\bhh\, \hat l\bg \vv\bh}. \label{eq:DeltaQt}\end{equation}
Since $\hat l\bg \vv\bh \geq c$ and $\|\hat l\bg\|, \|\vv\bh\| = 1$ for all $\alpha, \beta \in \N$, we have that, for a constant $C'$ independent of $\alpha, \beta, \cyi, \cyj$, the extension of $t_{(0,\beta+1)} - t_{(0,\beta)}$ has
\begin{align} \| t_{(0,\beta+1)}^{\cyi,\cyj} - t_{(0,\beta)}^{\cyi,\cyj} \|_{C^1} &\leq \frac{2\|W\|_{C^1} \sup_{x \in \O^b_\cyi} \| \vv\bhh - \vv\bh \|}{c^2}\notag\\
	& \leq C' \ \| W\|_{C^1} \sup_{x \in \O^b_\cyi} \nu_\beta(x). \label{eq:DeltatnNorm}\end{align}

Now, from \eqref{eq:tnCylinderSum}, we find
\[ \| (t_{(0,\beta+1)} -  t_{(0,\beta)}) \phi \|_\Ht \leq \sum_{\cyi \in \Sigma^{\beta+1}} \sum_{\cyj \in \Sigma} \left\| (t_{(0,\beta+1)} -  t_{(0,\beta)}) |_{N_{\cyi,\cyj}} \mathbb{1}_{\O^e_{\cyi} \cap \O^b_{\cyj}} \phi \right\|_{\Ht}.\]
From \eqref{eq:DeltatnNorm}, Lemma~\ref{l:FunctionMultipliers} and Lemma~\ref{l:CylinderMultipliers} we find 
\begin{align*} \|(t_{(0,\beta+1)} -  t_{(0,\beta)}) \phi \|_\Ht &\leq \sum_{\cyi \in \Sigma^{\beta+1}, \O^e_\cyi \neq \emptyset} \sum_{\cyj \in \Sigma, \O^b_\cyj \neq \emptyset} \sup_{x \in \O^b_\cyi} \nu_\beta(x) \| W\|_{C^1} C_\# \| \phi \|_\Ht,\end{align*}
from which Lemma~\ref{l:TopoPressure} implies that for some $\zeta < 1$,
\[ \| (t_{(0,\beta+1)} -  t_{(0,\beta)}) \phi \|_\Ht \leq C \zeta^\beta \| W \|_{C^1} \| \phi \|_\Ht. \]
Similarly, we find 
\[\|  (t_{(\alpha+1,\beta)} -  t_{(\alpha,0)}) \phi \|_\Ht \leq C \zeta^\alpha \| W \|_{C^1} \| \phi \|_\Ht. \]
Finally, we must bound the norm of the double differences $t_{(\alpha+1,\beta+1)} - t_{(\alpha+1,\beta)} - t_{(\alpha,\beta+1)} + t\bgh$. Using \eqref{eq:DeltaQt} we find that 
\begin{align*} & t_{(\alpha+1,\beta+1)} - t_{(\alpha+1,\beta)} - t_{(\alpha,\beta+1)} + t\bgh \\
	& \qquad = \frac{(\hat l\bgg \vv\bh\, \hat l\bgg - \hat l\bg \vv\bh\, \hat l\bg) W (\vv\bhh - \vv\bh) - (\hat l\bgg W \vv\bh\, \hat l\bgg - \hat l\bg W \vv\bh\, \hat l\bg) (\vv\bhh - \vv\bh)}{\hat l\bgg \vv\bhh\, \hat l\bgg \vv\bh\, \hat l\bg \vv\bhh\, \hat l\bg \vv\bh} \end{align*}
which implies that 
\begin{align} &\| t_{(\alpha+1,\beta+1)}^{\cyi,\cyj} - t_{(\alpha+1,\beta)}^{\cyi,\cyj} - t_{(\alpha,\beta+1)}^{\cyi,\cyj} + t_{(\alpha,\beta)}^{\cyi,\cyj} \|_{C^1(N_{\cyi,\cyj})} \notag \\
	&\qquad\leq \frac{8\|W\|_{C^1} \sup_{x \in \O^b_\cyi} \| \vv\bhh - \vv\bh \| \sup_{x \in \O^e_\cyj} \| \hat l\bgg - \hat l\bg \|}{c^4}\notag\\
	&\qquad \leq C' \ \| W\|_{C^1} \sup_{x \in \O^b_\cyi} \nu_\beta(f^\beta(x)) \sup_{x \in \O^b_\cyj} \nu_\alpha(x). \label{eq:DeltaDeltatnNorm}
	\end{align}
Following through the same argument we find that 
\[\|  (t_{(\alpha+1,\beta+1)} - t_{(\alpha+1,\beta)} - t_{(\alpha,\beta+1)} + t\bgh) \phi \|_\Ht \leq C \zeta^{\alpha+\beta} \| W \|_{C^1} \| \phi \|_\Ht, \]
which is absolutely convergent as $\alpha,\beta \to \infty$. Finally, it is by this point clear that $t_{(0,0)}$ is appropriately bounded and piecewise $C^1$, so that also
\[ \| t_{0,0} \phi \|_\Ht \leq C \| W \|_{C^1} \| \phi \|_\Ht. \]

Hence, we have that $t\bgh$ has a limit considered as a multiplication operator on $\Ht$, which by the usual argument must be $l^u W \vu$, and so there exists a constant $C$ such that 
\[ \| l^u W \vu \phi \|_\Ht \leq C \| \phi \|_\Ht,\]
as required.
\end{proof}

\begin{proof}[Proof of Proposition~\ref{p:KappaX}.]
	
	$DX$ is a $C^1$ tensor field if $X$ is $C^2$, and so from Lemma~\ref{l:UnstableDivXSobolevSpace},
	\[ \| l^u (DX) \vu \rho \|_\Ht \leq C \| DX \|_{C^1} \| \rho \|_\Ht \leq C' \| X \|_{C^2}. \]
	Furthermore, $l^u (DX) \vu$ is bounded and $\rho$ is a finite measure, so $l^u (DX) \vu \rho$ is a (signed) Borel measure.
	The lemma then follows from an application of Proposition~\ref{p:DecayOfCorrelations}.
\end{proof}

\section{Decay of remaining terms}\label{s:DecayLRho}

Of the remaining terms, $\kappa^l_n$ and $\kappa^\rho_n$ contain contributions from the jumps in $Df$ along the singularity sets $\S$. Our claim is that these contributions decay exponentially as we send $n \to \infty$. This relies on Conjecture~\ref{c:Full} about the behaviour of the slice measure $\condmeas$ on the singular set $\S$ as we push it forward under $f$. 

We will henceforth assume $C$ to be a constant dependent only on $f$.

\subsection{Decay of $\kappa^l_n$}

In this section we will prove the following proposition:
\begin{proposition}\label{p:KappaLSum}
	The term $\kappa^l_n$ can be written as
	\[ \kappa^l_n = - \sum_{m=0}^\infty \int_{\S} (A\circ f^{n-m}) \, (l^s X)\circ f^{-m} \,l^u(x_-) \vs(x_+) (e_1 \cdot \vu)\, \mu_m(y_+)\, \dd \condmeas. \]
\end{proposition}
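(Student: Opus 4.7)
The strategy is to expand the Stieltjes integral in \eqref{eq:KappaL} as a sum over the jumps of $l^u$ along each unstable leaf, organize these by the level $m$ at which the forward orbit of the jump point reaches $\ell_{\S}$, and for each $m$ push the resulting quantity forward to an integral on $\ell_{\S}$ against $\condmeas$.

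The key structural input is that $Df$ is piecewise constant on the Lozi map, so the approximants $\hat l\bg$ of Lemma~\ref{l:LeftIteratedDifferences} are piecewise constant on initial cylinders of length $\alpha+1$; passing to the limit, $l^u$ restricted to any unstable leaf $\vec I$ is a bounded-variation function whose distributional derivative along the leaf is a pure atomic measure supported on $\vec I \cap \bigcup_{m \geq 0} f^{-m}(\ell_{\S})$, with no absolutely continuous part. Consequently
\[ \int_0^1 (A\circ f^n)(\pi(\vec I,t))\, X(\pi(\vec I,t))\, \dd l^u(t) = \sum_{m=0}^\infty \sum_{\tilde x \in \vec I \cap f^{-m}(\ell_{\S})} (A\circ f^n)(\tilde x)\,[l^u(\tilde x_+) - l^u(\tilde x_-)]\,X(\tilde x). \]
Using $l^u(\tilde x_\pm)\vu(\tilde x) = 1$ and $l^u(\tilde x_\pm)\vs(\tilde x_\pm) = 0$ and evaluating at $\vs(\tilde x_+)$, the jump factorises as $l^u(\tilde x_+) - l^u(\tilde x_-) = -l^u(\tilde x_-)\vs(\tilde x_+)\,l^s(\tilde x_+)$, so each summand becomes $-(A\circ f^n)(\tilde x)(l^s X)(\tilde x_+)\,l^u(\tilde x_-)\vs(\tilde x_+)$.

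Fix $m \geq 1$. Each $\tilde x$ above corresponds uniquely to a descendant leaf $\vec J \in \D^m(\vec I)$ meeting $\ell_{\S}$ at $y := f^m(\tilde x)$, with $\tilde x = f^{-m}(y)$ via the branch singled out by $\vec J$. Iterating Lemma~\ref{l:SigmaForwardMap} $m$ times replaces the parent-integral-with-descendant-sum by an integral over $\vec\sigma(\vec J)$ directly (renamed $\vec I$), picking up a compensating weight $\lambda_m(\tilde x)$. Then Lemma~\ref{l:CondMeasSigma} converts the $\vec\sigma$-integral over leaves meeting $\ell_{\S}$ into an integral over $\ell_{\S}$ against $\condmeas$, yielding the $(e_1\cdot\vu)$ factor. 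Finally, since $Df^{-1}$ is single-valued (the branch ambiguity at $y \in \ell_{\S}$ is carried by $\vs$ and $l^u$, not by $Df^{-m}$ itself), the covariance relations $A(f^n(\tilde x)) = A(f^{n-m}(y))$, $(l^s X)(\tilde x_+) = (l^s X)\circ f^{-m}(y_+)$, $\vs(\tilde x_+) = \mu_m(y_+) Df^{-m}_y\vs(y_+)$ and the dual $l^u(\tilde x_-) Df^{-m}_y = \lambda_m(\tilde x)^{-1} l^u(y_-)$ combine so that the compensating $\lambda_m(\tilde x)$-weight cancels exactly, leaving the integrand $-(A\circ f^{n-m})(l^s X)\circ f^{-m}\,l^u(y_-)\vs(y_+)(e_1\cdot\vu)\mu_m(y_+)\,\dd\condmeas$, as claimed.

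The main obstacles are (i) the orientation and branch bookkeeping, i.e.\ matching the ``$\pm$'' decorations at $\tilde x$ (two sides of the leaf) with those at $y$ (two sides of $\ell_{\S}$), which depends on the parity-dependent orientation reversal of $f^m$; and (ii) the treatment of the $m = 0$ term, which is delicate because $\vec I$ lies entirely on one side of $\ell_{\S}$ and so has no interior jumps of $l^u$ at level $0$. The $m = 0$ contribution must therefore be recovered from the one-sided endpoint values of $l^u$ at leaf endpoints lying on $\ell_{\S}$, which in Proposition~\ref{p:Decomposition}'s integration-by-parts naively sit inside $\kappa^\rho_n$; a careful re-attribution of these endpoint half-jumps between $\kappa^\rho_n$ and $\kappa^l_n$ should produce the $m = 0$ term in the claimed form.
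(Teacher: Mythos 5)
Your main line of argument is essentially the paper's. The paper also organises the jumps of $l^u$ along a leaf by the generation $m$ at which the jump point lands on $\ell_{\S}$ (it does so via a recursion on $\psi_{m,n}(\vec I):=\int_0^1 \dd l^u\,(Df^mX\,A\circ f^n)\circ f^{-m}$ that peels off one level of jumps per step, rather than by first asserting that $\dd l^u$ restricted to a leaf is a purely atomic measure, but the two bookkeepings are equivalent given Proposition~\ref{p:lLeafTotalVariation}); it uses the same $\P^s$-insertion identity to factor the jump as $-l^u(\tilde x_-)\vs(\tilde x_+)\,l^s(\tilde x_+)$, iterates Lemma~\ref{l:SigmaForwardMap}, and converts to $\condmeas$ via Lemma~\ref{l:CondMeasSigma}. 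Your covariance bookkeeping (the $\lambda_m(\tilde x)$ weight from the pushforward cancelling against $l^u(\tilde x_-)Df^{-m}_y=\lambda_m(\tilde x)^{-1}l^u(y_-)$, together with $\vs(\tilde x_+)=\mu_m(y_+)Df^{-m}_y\vs(y_+)$) is correct.

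However, your ``obstacle (ii)'' is a genuine error rather than a delicacy. Local unstable manifolds $\vec I\in\vec\L$ do cross $\ell_{\S}$ --- that is precisely why the descendant map \eqref{eq:Children} can produce two segments, and why Lemma~\ref{l:CondMeasSigma} integrates $\sum_{s\in I\cap\ell_{\S}}$ against $\hat\sigma$. Moreover $l^u$ genuinely jumps at such interior crossings, because the stable direction (hence its dual covector $l^u$) is discontinuous across the critical line: this is exactly why the $\hat l\bg$ of Lemma~\ref{l:LeftIteratedDifferences} are constant only on initial cylinders of length $\alpha+1$, so that already $\hat l_{(0)}$ jumps on $\ell_{\S}$ itself. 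The $m=0$ term therefore arises from interior jumps exactly as for $m\geq 1$, and nothing needs to be recovered from leaf endpoints; endpoint contributions on $\ell_{\S}$ are $\condmeas$-null by Proposition~\ref{p:CriticalLineSelfIntersectionMeasure}, which is the content of Lemma~\ref{l:CondMeasSigma}. The ``re-attribution of endpoint half-jumps between $\kappa^\rho_n$ and $\kappa^l_n$'' you propose must not be carried out: it would change both quantities away from their definitions \eqref{eq:KappaRho}--\eqref{eq:KappaL}, so you would no longer be proving the stated identity for $\kappa^l_n$. Drop that step and your argument closes.
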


Proposition~\ref{p:CriticalLineSelfIntersectionMeasure} shows that $l^u$ is uniquely defined at almost all points along $\ell_{\S}$. Our results would still carry through if it were not the case, but it makes notation easier, and furnishes an interesting contrast with the one-dimensional case, where for a certain (dense) set of parameters the critical point is pre-periodic. 

We also would like the following result:
\begin{proposition}\label{p:lLeafTotalVariation}
	There exists a constant $C$ such that for any line segment $I \subset \M$,
	\[ \int_{I} \| \dd l^u \| \leq C. \]
\end{proposition}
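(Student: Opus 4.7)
The plan is to telescope the variation of $l^u$ along $I$ using the iterative constructions from Lemmas~\ref{l:RightIteratedDifferences} and~\ref{l:LeftIteratedDifferences}, and to bound the variation of each summand by combining the sup-norm estimate $\nu_\alpha\le(\mu/\lambda)^\alpha$ with the convexity of cylinders from Proposition~\ref{p:CylinderIntersections}.

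First I would reduce the problem to bounding the variations $V_I(\hat l^u)$ and $V_I(\vu)$ separately (writing $V_I$ for total variation along $I$). Since $l^u$ is the covector proportional to $\hat l^u$ normalised by $l^u\vu\equiv1$, we have $l^u=\hat l^u/(\hat l^u\vu)$; the scalar $\hat l^u\vu$ is uniformly bounded below by transversality of unstable and stable cones, so the map $(\hat l,v)\mapsto \hat l/(\hat l v)$ is Lipschitz on the relevant compact set of values. Hence $V_I(l^u)\le C(V_I(\hat l^u)+V_I(\vu))$, and it suffices to bound each term by a constant depending only on $f$.

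The heart of the argument is the telescope $\hat l^u=\hat l_{(0)}+\sum_{\alpha=0}^\infty(\hat l\bgg-\hat l\bg)$. By Lemma~\ref{l:LeftIteratedDifferences}, the summand $\hat l\bgg-\hat l\bg$ is piecewise constant on the common refinement by initial cylinders of length $\alpha+2$, with sup norm at most $2C(\mu/\lambda)^\alpha$. By Proposition~\ref{p:CylinderIntersections} each initial cylinder is convex, so its intersection with $I$ is connected; hence the number of jumps of $\hat l\bgg-\hat l\bg$ encountered by $I$ equals the number of initial cylinders of length $\alpha+2$ it meets, minus one. This number is bounded by $1+\sum_{j=0}^{\alpha+1}|I\cap f^{-j}(\ell_{\S})|$. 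The key counting step is the observation that, since $f$ is piecewise affine with two pieces, so is $f^{-1}$, and a trivial induction on $j$ shows that $f^{-j}(\ell_{\S})\cap\M$ is a union of at most $2^j$ line segments; a line meets each segment at most once, so $|I\cap f^{-j}(\ell_{\S})|\le 2^j$. Assembling these,
\[ V_I(\hat l\bgg-\hat l\bg) \le 2\cdot 2^{\alpha+2}\cdot 2C(\mu/\lambda)^\alpha \le C'\,(2\mu/\lambda)^\alpha. \]
Since the standing hypothesis behind Lemma~\ref{l:TopoPressure} is precisely $2\mu/\lambda<1$, this geometric series converges uniformly in $I$. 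Finally $V_I(\hat l_{(0)})$ is uniformly bounded because $\hat l_{(0)}$ is piecewise constant with discontinuity set a finite union of line segments, and the analogous estimate for $V_I(\vu)$ runs identically via Lemma~\ref{l:RightIteratedDifferences}, using convexity of final cylinders and the fact that $f^j(\ell_{\S})\cap\M$ is again a union of at most $2^j$ segments.

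The main obstacle is the counting step: a naive use of complexity would lose a factor $2^\alpha$ from the total number of cylinders at each level and then another $2^\alpha$ from the segments in $f^{-j}(\ell_{\S})$, which would not be summable. The convexity property from Proposition~\ref{p:CylinderIntersections} is what lets us count only the cylinders that $I$ actually meets, yielding a single factor $2^\alpha$ that is absorbed by $(\mu/\lambda)^\alpha$ under exactly the hypothesis $2\mu/\lambda<1$ already needed for Lemma~\ref{l:TopoPressure}.
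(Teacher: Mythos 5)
Your proposal is correct and follows essentially the same route as the paper: telescope $\hat l^u$ (and $\vu$) through the piecewise-constant approximants of Lemmas~\ref{l:RightIteratedDifferences}--\ref{l:LeftIteratedDifferences}, use the convexity of cylinders (Proposition~\ref{p:CylinderIntersections}) to count the jumps each summand contributes along $I$, sum the resulting geometric series under $2\mu/\lambda<1$, and transfer the bound to $l^u$ via the uniform Lipschitz continuity of $(\hat l,v)\mapsto \hat l/(\hat l v)$. The only difference is that you carry out the cylinder count by hand where the paper invokes Lemma~\ref{l:TopoPressure}, but that lemma is itself proved by the identical crude count, so the arguments coincide.
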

\begin{proof}[Proof of Proposition~\ref{p:lLeafTotalVariation}]
	First we will prove that this integral makes sense. Recall that $l^u = \hat l^u/(\hat l^u \vu)$. We start by proving this lemma for $\hat l^u$.
	
	Recall the definition of the $\hat l\bg$ in Lemma~\ref{l:LeftIteratedDifferences}. We will show that these form a uniformly bounded Cauchy sequence in the space of functions of bounded total variation on $I$, of which $\hat l^u$ is the natural limit.
	
	From Lemma~\ref{l:LeftIteratedDifferences}, $\hat l\bg$ is piecewise constant on cylinders of length $\alpha+1$. Consequently, the total variation of $\hat l\bg$ on $I$ is zero when $\alpha+1 \leq m$, and the difference $\hat l\bgg - \hat l\bg$ is bounded by
	\begin{align*} \int_I \left\|\dd (\hat l\bgg - \hat l\bg)\right\| &= \sum_{\pi(I,t) \in f^{-\beta}(S), \beta \leq \alpha+2} \left| (\hat l\bgg - \hat l\bg)(\pi(I,t_+)) - (\hat l\bgg - \hat l\bg)(\pi(I,t_-)) \right|\\
	& \leq \sum_{\pi(I,t) \in f^{-\beta}(S), \beta \leq \alpha+2} C \nu_\alpha(\pi(I,t_-)) + C \nu_\alpha(\pi(I,t_+)). \end{align*}
	Since $I$ is a line, the intersection between any cylinder and $I$ has one connected component from Proposition~\ref{p:CylinderIntersections}. Thus, each cylinder of length $\alpha+2$ contains no more than two points in this sum, and each cylinder of length $\alpha$ therefore no more than eight. Hence, we can bound this by
	\[ \int_I \|\dd (\hat l\bgg - \hat l\bg)\| \leq 8C \sum_{\cyj\in \Sigma^{\alpha}: \O^b_\cyj \cap I \neq \emptyset} \sup_{x \in \O^b_\cyj \cap I} | \nu_\alpha(x)| \leq C \zeta^p, \]
	using Lemma~\ref{l:TopoPressure} in the last inequality. Thus, $\hat l\bg$ is a Cauchy sequence converging to $\hat l^u$ in the space of total variation on $I$, and 
	\[ \int_I \| \dd \hat l^u \| \leq C \sup_{x \in I} \nu_m(y) \]
	for some $C$ independent of $I$.
	
	The same argument using Lemma~\ref{l:RightIteratedDifferences} holds for $\vu$, and since $(\hat l^u, \vu) \mapsto \hat l^u/(\hat l^u \vu)$ is uniformly Lipschitz, we get the same result for $l^u$.
\end{proof}

This can be combined with the following lemma, which bounds Stieltjes integrals with respect to $l^u$ of vector fields $B$, which may be relatively large in unstable directions:
\begin{lemma}\label{l:StieltjesdlStable}
	For any local unstable manifold $\vec I \in \vec{\L}$ and any continuous vector field $B$,
	\[ \left| \int_{\vec I} \dd l^u\, B \right| \leq \int_{\vec I} \| \dd l^u \|\, \sup_{\vec I} \| \P^s B \|. \]
\end{lemma}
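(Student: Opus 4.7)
The plan is to reduce the integral to a Stieltjes integral against $\P^s B$ by exploiting the duality identity $l^u \vu \equiv 1$ combined with the fact that $\vu$ is constant along any local unstable manifold of the (piecewise affine) Lozi map.

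First, I would observe that $\vec I$ is a straight segment in the direction of the unstable field, and since $\vec I$ lies in a single smooth piece of $f$, the unit vector $\vu$ is constant along $\vec I$; call this constant vector $\vu_0$. Combined with the pointwise identity $l^u \vu \equiv 1$ from Section~\ref{s:Setup}, this gives that $l^u(\cdot)\,\vu_0 \equiv 1$ on $\vec I$. Since $l^u$ is of bounded variation on $\vec I$ (Proposition~\ref{p:lLeafTotalVariation}), the covector-valued Radon measure $\dd l^u$ on $\vec I$ satisfies
\[ (\dd l^u)\,\vu_0 \;=\; \dd\bigl(l^u \vu_0\bigr) \;=\; \dd(1) \;=\; 0. \]

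Second, I would decompose $B$ pointwise along $\vec I$ as
\[ B \;=\; \P^u B + \P^s B \;=\; (l^u B)\,\vu_0 + \P^s B, \]
so that, writing the Stieltjes integral termwise,
\[ \int_{\vec I} \dd l^u\, B \;=\; \int_{\vec I} (l^u B)\,(\dd l^u)\,\vu_0 \;+\; \int_{\vec I} \dd l^u\, \P^s B \;=\; \int_{\vec I} \dd l^u\, \P^s B, \]
the first integral being an integral of the bounded continuous scalar $l^u B$ against the zero signed measure $(\dd l^u)\vu_0$ identified in the previous step. The continuity of $\P^s B$ on $\vec I$ and the BV regularity of $l^u$ justify this splitting of the Stieltjes integral.

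Finally, the standard bound for Stieltjes integrals of a continuous integrand against a BV function gives
\[ \left| \int_{\vec I} \dd l^u\, \P^s B \right| \;\leq\; \sup_{\vec I} \| \P^s B \| \int_{\vec I} \| \dd l^u \|, \]
which is the desired estimate. The only subtle point is the legitimacy of the product-rule identity $(\dd l^u)\vu_0 = \dd(l^u \vu_0)$ for a BV covector field paired with a \emph{constant} vector, but since $\vu_0$ carries no variation this is just linearity of the differential operator on BV functions, so no genuine obstacle arises.
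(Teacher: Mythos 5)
Your argument is correct and is essentially the paper's proof: the paper's Lemma~\ref{l:SneakingInPs} encodes exactly your key observation that pairing increments of $l^u$ with the unstable direction vanishes because $\vu$ is constant along $\vec I$ and $l^u \vu \equiv 1$, so only $\P^s B$ survives in the integrand. The only cosmetic difference is that the paper inserts $\P^s$ term-by-term into the Riemann--Stieltjes partial sums rather than splitting the integral into two pieces, which sidesteps the minor point that $l^u B$ is BV but not continuous (harmless in your reading, since you integrate it against the zero measure $(\dd l^u)\,\vu_0$ in the Lebesgue--Stieltjes sense).
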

We will use the following result to prove this:
\begin{lemma}\label{l:SneakingInPs}
	For any $\vec I \in \vec{\L}$, vector $v$ and $x,y,z \in \vec I$,
	\[ (l^u(x) - l^u(y)) v = (l^u(x) - l^u(y)) \P^s(z) v, \]
\end{lemma}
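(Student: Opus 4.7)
The plan is to use the fact that a local unstable manifold $\vec I \in \vec{\L}$ is a straight line segment (Proposition~\ref{p:LUMIsSegment}) along which the unstable vector field $\vu$ is constant, combined with the defining property $l^u \vu \equiv 1$.

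First I would unpack the projection using $\P^s(z) = \id - \P^u(z) = \id - \vu(z) l^u(z)$, which gives
\[
(l^u(x) - l^u(y)) \P^s(z) v = (l^u(x) - l^u(y)) v \;-\; \bigl(l^u(x)\vu(z) - l^u(y)\vu(z)\bigr)\,l^u(z) v.
\]
The goal is then to show that the second term vanishes.

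Next I would observe that since $\vec I$ is a line segment in the unstable cone and coincides with the local unstable manifold, its tangent direction is the same at every point, so $\vu(x) = \vu(y) = \vu(z)$ for all $x,y,z \in \vec I$. Therefore $l^u(x)\vu(z) = l^u(x)\vu(x) = 1$ and similarly $l^u(y)\vu(z) = 1$, so the bracketed scalar $l^u(x)\vu(z) - l^u(y)\vu(z) = 0$ and the second term drops out, yielding the claimed identity.

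There is no real obstacle here; the lemma is essentially a one-line calculation once one recognises that the covector difference $l^u(x) - l^u(y)$ annihilates the common unstable direction shared by all points on $\vec I$. The only point to be mindful of is the remark just before the statement (via Proposition~\ref{p:CriticalLineSelfIntersectionMeasure}) ensuring that $l^u$ is well-defined at the points of $\vec I$ one is evaluating at; away from the countable set of forward singular preimages the local unstable manifold is a genuine segment and $\vu, l^u$ are canonically defined, so the argument applies pointwise as needed.
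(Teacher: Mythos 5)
Your proposal is correct and is essentially identical to the paper's own proof: both decompose $\id = \P^u(z) + \P^s(z)$ and observe that $(l^u(x)-l^u(y))\P^u(z)v = (l^u(x)\vu(x) - l^u(y)\vu(y))\,l^u(z)v = 0$ because $\vu$ is constant along the segment $\vec I$ and $l^u\vu \equiv 1$. No issues.
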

\begin{proof}
	Using the decomposition $\id = \P^u + \P^s$ we have for any vector $v$ and $x,y,z \in \vec I$ that
\[ (l^u(x) - l^u(y)) v = (l^u(x) - l^u(y))(\P^u(z) v + \P^s(z) v). \]
Now,
\[ (l^u(x) - l^u(y))\P^u(z) v = (l^u(x) \vu(z) - l^u(y) \vu(z)) l^u(z) v = (l^u(x) \vu(x) - l^u(y) \vu(y)) l^u(z) v = 0 \]
since $\vu$ is constant along local unstable manifolds, and $l^u\vu \equiv 1$.
\end{proof}
\begin{proof}[Proof of Lemma~\ref{l:StieltjesdlStable}]
	 We have that the Stieltjes integral can be bounded in terms of partitions $P$ of $\vec I$ as:
	\begin{align*} \left| \int_{\vec I} \dd l^u\, B \right| &\leq \sup_{P} \sum_{ [x^i,x^{i+1}] \in P} \sup_{z^i \in [x^i,x^{i+1}]} | (l^u(x^i) - l^u(x^{i+1})) B(z^i) |. \end{align*}
	By Lemma~\ref{l:SneakingInPs} we can say 
	\begin{align*}| (l^u(x^i) - l^u(x^{i+1})) B(z^i) | &= | (l^u(x^i) - l^u(x^{i+1})) \P^s(z^i) B(z^i) |\\
	& \leq \| l^u(x^i) - l^u(x^{i+1}) \| \| (\P^s B)(z^i) \|,\end{align*}
	so
	\begin{align*}
	\left| \int_{\vec I} \dd l^u\, B \right| &\leq \sup_{P} \sum_{[x^ i,x^{i+1}] \in P} \| l^u(x^i) - l^u(x^{i+1}) \| \sup_{\vec I} \| \P^s B \| \\
	&= \int_{\vec I} \| \dd l^u \| \sup_{\vec I} \| \P^s B \| \end{align*}
	as required.
\end{proof}

This allows us to prove Proposition~\ref{p:KappaLSum}:
\begin{proof}[Proof of Proposition~\ref{p:KappaLSum}]
	If we define for any segment $\vec J \subset \vec I \in \hat \L$ that
	\[ \psi_{m,n}(\vec J) := \int_0^1 \dd l^u(\pi(\vec J,t)) (Df^m X\, A \circ f^n)(f^{-m}(\pi(\vec J,t))), \]
	then we have that
	\begin{equation} \kappa^l_n = - \int \psi_{0,n}(\vec I) \frac{\dd \hat{\rho}(I_{p,q})}{\vu \cdot (q-p)}. \label{eq:KappaLAsPsi}\end{equation}
	
	Now, since $l^u(x) D_{f^{-1}(x)} f = (\lambda_1 l^u)(f^{-1}(x))$ and $Df$ and $\lambda_1$ are constant on $f^{-1}(\vec I)$,
	\begin{align*} \psi_{m+1,n}(\vec I) &= \int_0^1 (\dd l^u \lambda_1)(f^{-1}(\pi(\vec I,t))) (Df^m X\, A \circ f^n)(f^{-m-1}(\pi(\vec I,t)))\\
		& = \lambda_1(u_{\vec I})) \psi_{m,n}(f^{-1}(\vec I)) \end{align*}
		for an arbitrary choice of point $u_{\vec I} \in f^{-1}(\vec I)$.
		
	Because $\psi_{m,n}$ are Stieltjes integrals with respect to $l^u$, we can decompose over subsegments of $\vec I$. If $\vec I$ decomposes as $\vec J \cup \{s\} \cup \vec J'$ with $s := \pi(\vec I, t^*)$ the point of intersection between segments $\vec J, \vec J'$,
	\[ \psi_{m,n}(\vec I) = \psi_{m,n}(\vec J) + \Delta_{\vec I}[l^u](s) (Df^m X\, A \circ f^n)(f^{-m}(s)) + \psi_{m,n}(\vec J'), \]
	where $\Delta_{\vec I}[l^u](s) := l^u(\pi(\vec I,t^*_+)) - l^u(\pi(\vec I,t^*_-))$ is the jump in $l^u$ along (and in the direction of) $\vec I$ at $s$.
	
	From Lemma~\ref{l:SigmaForwardMap} we then have
	\begin{align} \int_{\vec \L} \psi_{m,n}(\vec I) \,\dd\vec\sigma(\vec I) &= \int_{\vec \L} \sum_{\vec J = \M_\pm \cap \vec I \neq \emptyset} \lambda_1(u_{\vec J}) \psi_{m,n}(f(\vec J)) \,\dd\vec\sigma(\vec I) \notag \\
		&= \int_{\vec \L} \sum_{\vec J = \M_\pm \cap \vec I \neq \emptyset} \psi_{m+1,n}(\vec J)\,\dd\vec\sigma(\vec I) \notag \\
		&= \int_{\vec \L}\left(\psi_{m+1,n}(\vec I) - \sum_{s \in \vec I \cap \ell_{\S}} \Delta_{\vec I}[l^u](s) (Df^m X\, A \circ f^n)(f^{-m}(s))\right)\dd\vec\sigma(\vec I). \label{eq:PsimnInduction}
	\end{align}
	
	By Proposition~\ref{p:lLeafTotalVariation} and Lemma~\ref{l:StieltjesdlStable}, for $\vec\sigma$-almost any $\vec I \in \vec \L$
		\begin{align*}
		|\psi_{m,n}(\vec I)| &\leq \int_I \|\dd l^u\| \sup_I \| \P^s (Df^m X\, A \circ f^n)(f^{-m}(x)) \|\\
		& \leq C \| A\|_\infty \sup_I \| Df^m \P^s X \|\\
		& \leq C \|A \|_\infty \mu^m \|X\|_\infty,
		\end{align*}
		which converges to zero as $m \to \infty$. From this, 
		\eqref{eq:KappaLAsPsi} and \eqref{eq:PsimnInduction} we can conclude inductively that 
		\[ \kappa^l_n = \int_{\vec \L} \sum_{m=1}^\infty \sum_{s \in \vec I \cap \ell_{\S}} \Delta_{\vec I}[l^u](s) (Df^m X\, A \circ f^n)(f^{-m}(s))\dd\vec\sigma(\vec I). \]
		Note that the inner sum will contain at most one point.
		
		The argument in Lemma~\ref{l:StieltjesdlStable} tells us that, when $\vec I$ points from $\M_-$ into $\M_+$, 
		\[ \Delta_{\vec I}[l^u](s) (Df^m X\, A \circ f^n)(f^{-m}(s)) = (l^u(s_+) - l^u(s_-)) \P^s(s_+) (Df^m X\, A \circ f^n)(f^{-m}(s)).\]
		Since $\P^s(s_+) D_{f^{-m}(s)}f^m = \vs(s_+) \mu_m(s_+) l^s(s_+)$ and $l^u \vs \equiv 0$, and using Lemma~\ref{l:CondMeasSigma}, we obtain the requisite result.
	\end{proof}

\subsection{Decay of $\kappa^\rho_n$}

\begin{proposition}\label{p:KappaRhoSum}
	The term $\kappa^\rho_n$ can be written as
	\[ \kappa^\rho_n = \sum_{m=1}^\infty \int_{\S} (A\circ f^{n+m})\, (l^u X) \circ f^{m}\, (\lambda_m^{-1}(x_+) - \lambda_m^{-1}(x_-)) \, \vu \cdot e_1 \,\dd \condmeas \]
\end{proposition}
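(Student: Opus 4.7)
The approach parallels the proof of Proposition~\ref{p:KappaLSum}: we iteratively apply Lemma~\ref{l:SigmaForwardMap} to the endpoint functional defining $\kappa^\rho_n$, each step producing a boundary jump at $\ell_{\S}$ while the remaining term is a pushforward whose amplitude decays geometrically. Introduce, for $m\geq 0$,
\[ G_m(x) := \lambda_m^{-1}(x)\,(A\circ f^{n+m})(x)\,(l^u X)(f^m(x)),\qquad \Phi_m(\vec I) := G_m(q_{\vec I}) - G_m(p_{\vec I}), \]
with $G_m$ interpreted as a one-sided limit at points of $\bigcup_{k\geq 0} f^{-k}(\S)$. The cocycle identity $\lambda_{m+1}(x)=\lambda_1(x)\,\lambda_m(f(x))$ gives the branch relation $G_{m+1}(x)=\lambda_1^{-1}(x)G_m(f(x))$ on each piece of $\M\setminus\bigcup_{k=0}^{m-1}f^{-k}(\S)$, and by \eqref{eq:KappaRho}, $\kappa^\rho_n = \int_{\vec\L}\Phi_0\,d\vec\sigma$.

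Applying Lemma~\ref{l:SigmaForwardMap}, an uncut $\vec I$ contributes $\lambda_1^{-1}(\vec I)\bigl(G_m(f(q_{\vec I}))-G_m(f(p_{\vec I}))\bigr)=\Phi_{m+1}(\vec I)$. If $\vec I$ is cut at $s\in\ell_{\S}$, the two descendants $f(\vec I\cap\M_\pm)$, combined with the piecewise constancy of $\lambda_1^{-1}$ on $\M_\pm$, telescope to $\Phi_{m+1}(\vec I)-\Delta_{\vec I}[G_{m+1}](s)$, where $\Delta_{\vec I}[G_{m+1}](s)$ denotes the jump of $G_{m+1}$ at $s$ taken in the direction of $\vec I$. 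Hence
\[ \int_{\vec\L}\Phi_m\,d\vec\sigma = \int_{\vec\L}\Phi_{m+1}\,d\vec\sigma - \int_{\vec\L}\sum_{s\in\vec I\cap\ell_{\S}}\Delta_{\vec I}[G_{m+1}](s)\,d\vec\sigma(\vec I). \]
Since $|G_M(x)|\leq\lambda^{-M}\|A\|_\infty\|l^uX\|_\infty$ and $\vec\sigma$ is finite by Proposition~\ref{p:SigmaFinite}, the tail $\int_{\vec\L}\Phi_M\,d\vec\sigma$ vanishes geometrically. Iterating from $m=0$ and sending $M\to\infty$ (dominated convergence supplying summability) gives
\[ \kappa^\rho_n = -\sum_{m=1}^\infty\int_{\vec\L}\sum_{s\in\vec I\cap\ell_{\S}}\Delta_{\vec I}[G_m](s)\,d\vec\sigma(\vec I). \]

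It remains to identify the jump and pass to $\condmeas$. The factor $A\circ f^{n+m}$ is continuous; moreover, by Proposition~\ref{p:CriticalLineSelfIntersectionMeasure} the forward orbit $\{f^j(s)\}_{j\geq 1}$ avoids $\ell_{\S}$ at $\condmeas$-a.e.\ $s$, and the attractor misses $\partial\M$, so $s\notin\bigcup_{j\geq m}f^{-j}(\S)$, which is the discontinuity set of $l^u\circ f^m$ for every $m\geq 1$. Consequently the jump in $G_m$ arises solely from $\lambda_m^{-1}$:
\[ \Delta_{\vec I}[G_m](s) = \pm\,(A\circ f^{n+m})(s)\,(l^u X)(f^m(s))\,\bigl(\lambda_m^{-1}(s_+)-\lambda_m^{-1}(s_-)\bigr), \]
with the sign reflecting the crossing orientation of $\vec I$. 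Applying Lemma~\ref{l:CondMeasSigma} converts $\int_{\vec\L}\sum_{s\in\vec I\cap\ell_{\S}}(\cdot)\,d\vec\sigma$ into an integral over $\ell_{\S}$ against $\condmeas$; the orientation antisymmetry of $\vec\sigma$ exactly absorbs the crossing sign, and the passage supplies the factor $\vu\cdot e_1$. The resulting expression is the one claimed.

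The principal technical obstacles are: (i)~consistent sign bookkeeping through the orientation antisymmetry of $\vec\sigma$, handled along the lines already established in Proposition~\ref{p:KappaLSum}; (ii)~the crucial use of Proposition~\ref{p:CriticalLineSelfIntersectionMeasure} to justify that $l^u\circ f^m$ is continuous at $\condmeas$-a.e.\ $s\in\ell_{\S}$, so that the jump reduces cleanly to that of $\lambda_m^{-1}$; and (iii)~justifying the interchange of summation and the limit $M\to\infty$, which is immediate from the uniform bound $|\Delta_{\vec I}[G_m](s)|=O(\lambda^{-m})$ combined with finiteness of the total crossing mass established in the proof of Proposition~\ref{p:SigmaFinite}.
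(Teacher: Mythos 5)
Your proof follows the paper's own argument essentially step for step: your $\Phi_m$ is the paper's $\chi_{m,n}$, the recursion you obtain from Lemma~\ref{l:SigmaForwardMap} is the paper's \eqref{eq:ChiRecursion}, the tail estimate $|\Phi_M|=O(\lambda^{-M})$ together with finiteness of $\vec\sigma$ is exactly how the paper closes the telescoping sum, and the passage to $\condmeas$ via Lemma~\ref{l:CondMeasSigma} is the same.

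The one step I would push back on is your point~(ii): the claim that $l^u\circ f^m$ is continuous at $\condmeas$-a.e.\ $s\in\ell_{\S}$, so that the jump of $G_m$ comes solely from $\lambda_m^{-1}$. Proposition~\ref{p:CriticalLineSelfIntersectionMeasure} controls only the \emph{stable} data at $f^m(s)$: it makes $\vs$, $l^s$ and the unit covector $\hat l^u$ single-valued there, since these depend on the forward itinerary of $f^m(s)$. But $l^u=\hat l^u/(\hat l^u\vu)$ also involves $\vu$, whose discontinuity set is $\bigcup_{k\geq1}f^k(\S)$, and $f^m(s)\in f^m(\ell_{\S})$ for every $m\geq1$: the image under $f^m$ of an unstable segment crossing $\ell_{\S}$ at $s$ is a broken line with a corner at $f^m(s)$, so $\vu(f^m(s_+))\neq\vu(f^m(s_-))$ and hence $(l^uX)(f^m(s_+))\neq(l^uX)(f^m(s_-))$ in general. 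What is actually true --- and what your recursion hands you directly, with no need for the continuity claim --- is that the jump is that of the full product $\lambda_m^{-1}(x)\,(l^uX)(f^m(x))=l^u(x)(D_xf^m)^{-1}X(f^m(x))$, whose two one-sided values at $s$ are $(\hat l^uX)(f^m(s))/\bigl(\hat l^u(f^m(s))\,D_{s_\pm}f^m\,\vu(s)\bigr)$; here the numerator and $\vu(s)$ are precisely the quantities that Proposition~\ref{p:CriticalLineSelfIntersectionMeasure} does make single-valued almost everywhere. The statement of Proposition~\ref{p:KappaRhoSum} should accordingly be read with the one-sided limits applied to the product $(l^uX)\circ f^m\cdot\lambda_m^{-1}$ rather than to $\lambda_m^{-1}$ alone --- this is also the form in which Conjecture~\ref{c:Full} is invoked in Proposition~\ref{p:KappaLRhoDecay}, with $B=l^uX$ and integrand $(B\circ f^m)\lambda_m^{-1}$ --- and your derivation is correct once the factorisation claim is dropped.
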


\begin{proof}
	Let
	\[ \chi_{m,n}(\vec I_{p,q}) = (l^u X \, A \circ f^{n})(f^m(q))  \lambda_m^{-1}(q) - (l^u X \, A \circ f^{n})(f^m(p))  \lambda_m^{-1}(p). \]
	From the definition of $\kappa^\rho_n$ in Proposition~\ref{p:Decomposition}, it is clear that
	\begin{equation} \kappa^\rho_n = \int_{\vec \L} \chi_{0,n}\, \dd \vec{\sigma}. \label{eq:KappaRhoChi}\end{equation}
	Using Lemma~\ref{l:SigmaForwardMap}, we have that
	\begin{align*}
	\int_{\vec \L} \chi_{m,n}\, \dd \vec{\sigma} &=
	 &\qquad\qquad \left.(A \circ f^{n}\, l^u X)(f^{m+1}(p_{\vec J})) \lambda_m^{-1}(p_{\vec J}) \right)\,\dd\vec\sigma(\vec I).
	\end{align*}
	Setting $u_{\vec J} = f^{p}(p_{\vec J})$ we can combine $\lambda_1(u_{\vec J}) \lambda_m(p_{\vec J}) = \lambda_{m+1}(p_{\vec J})$, and similarly for $q_{\vec J}$, keeping note of the fact that the endpoints written $p_{\vec J}, q_{\vec J}$ are considered as limits of $\vec J$ going to the endpoints.
	
	The points $p_{\vec I}$ and $q_{\vec I}$ will be among the boundary points of the preimages of the descendants of a local unstable manifold $\vec I$. Another possible boundary point, counted once as a starting point and once as an endpoint, will be where $\vec I_{p,q}$ intersects the singular line $\ell_{\S}$. With the correct signs, this leaves us that 
	\begin{equation} \int_{\vec \L} \chi_{m,n}\, \dd \vec{\sigma} = \int_{\vec \L} \chi_{m+1,n} + \sum_{s \in \vec I \cap \ell_{\S}} (A \circ f^{n}\, l^u X)(f^{m+1}(s)) \Delta_{\vec I}[\lambda_{m+1}^{-1}](s)\, \dd \vec{\sigma}. \label{eq:ChiRecursion} \end{equation}
		Now $|\chi_{m,n}| \leq C \lambda^{-m} \| X\|_\infty \|A \|_\infty$ which converges uniformly to zero as $m \to \infty$, so we can combine \eqref{eq:KappaRhoChi} and \eqref{eq:ChiRecursion} to obtain
		\[ \kappa^\rho_n  = \sum_{m=1}^\infty \int_{\vec \L} \sum_{s \in \vec I \cap \ell_{\S}} (A \circ f^{n}\, l^u X)(f^{m+1}(s)) \Delta_{\vec I}[\lambda_{m+1}^{-1}](s)\, \dd \vec{\sigma} \]
	An application of Lemma~\ref{l:CondMeasSigma} then gives us the required result.
\end{proof}

\subsection{Putting terms together}

The following result is a simple application of Conjecture~\ref{c:Full}, summing up the terms in the previous two propositions, and rewriting $\kappa^\rho_\infty$ and $\kappa^l_\infty$ as above.
\begin{proposition}\label{p:KappaLRhoDecay}
	Under Conjecture~\ref{c:Full}, there exists $C > 0$, $c \in (0,1)$ such that 
	\begin{equation} | \kappa^\rho_n - \kappa^\rho_\infty | < C c^n \|A\|_{C^\Aord} \|X\|_{\mathcal{B}} \label{eq:KappaRhoDecay} \end{equation}
		and
	\begin{equation} | \kappa^l_n - \kappa^l_\infty | < C c^n\|A\|_{C^\Aord} \|X\|_{\mathcal{B}} \label{eq:KappaLDecay} \end{equation}
	where
	\[ \kappa^\rho_\infty = - \rho(A) \int_{\vec \L} \left((l^u X)(q_{\vec I}) - (l^u X)(p_{\vec I}) \right)\,\dd \vec \sigma(\vec I)\]
	and
	\[\kappa^l_\infty = - \rho(A) \int_{\vec \L} \int_{\vec I} \dd l^u\, X\, \dd\vec\sigma(\vec I)  \]
\end{proposition}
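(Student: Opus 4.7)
My plan is to apply Conjecture~\ref{c:Full} termwise to the infinite-sum representations of $\kappa^\rho_n$ and $\kappa^l_n$ afforded by Propositions~\ref{p:KappaRhoSum} and \ref{p:KappaLSum}, obtaining errors that are geometric in $m$ with common exponentially-decaying factor $c^n$. Summing the resulting geometric series in $m$ then yields the bounds \eqref{eq:KappaRhoDecay} and \eqref{eq:KappaLDecay}. To identify the limits $\kappa^\rho_\infty$ and $\kappa^l_\infty$, I re-run the telescoping arguments from the proofs of Propositions~\ref{p:KappaRhoSum} and \ref{p:KappaLSum}, with the observable $A \circ f^{n\pm m}$ replaced by its correlation-limit value $\rho(A)$.

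For \eqref{eq:KappaRhoDecay}, I split the $m$-th summand of the series in Proposition~\ref{p:KappaRhoSum} along the $\pm$ sides of the jump $\lambda_m^{-1}(x_+) - \lambda_m^{-1}(x_-)$, producing two integrals each of the template shape $\condmeas((A\circ f^{n+m})(l^u X \circ f^m)\,\Gamma\, \lambda_m^{-1})$ with $\Gamma = \vu\cdot e_1$ taken as a one-sided limit. This fits \eqref{eq:Conjecture1} provided $l^u X \in \B$ (guaranteed by the second bullet of the conjecture whenever $X \in C^2$) and $\|\Gamma\|_{BV(\S)}$ is bounded independently of $m$. The latter follows by representing $\vu$ as a telescopic sum of piecewise-constant approximations, in exact analogy with the construction of $\hat l^u$ in Lemma~\ref{l:LeftIteratedDifferences} whose total variation along any line segment is controlled as in the proof of Proposition~\ref{p:lLeafTotalVariation}. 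Each piece is then within $C c^n \theta^m \|A\|_{C^\Aord}\|X\|_{C^2}$ of its decorrelated version; summing in $m$ produces a bound of $C' c^n \|A\|_{C^\Aord}\|X\|_{C^2}$ between $\kappa^\rho_n$ and the $n$-independent quantity $S := \rho(A) \sum_{m=1}^\infty \condmeas\bigl((l^u X\circ f^m)[\lambda_m^{-1}(x_+) - \lambda_m^{-1}(x_-)](\vu\cdot e_1)\bigr)$. The telescoping induction from the proof of Proposition~\ref{p:KappaRhoSum}, run with the constant function $1$ in place of $A\circ f^n$ (with $\chi_{m,n}\to 0$ uniformly as $m\to\infty$), collapses $S$ to $\rho(A) \int_{\vec\L}((l^u X)(q_{\vec I}) - (l^u X)(p_{\vec I}))\,\dd\vec\sigma(\vec I)$, matching $\kappa^\rho_\infty$ up to the orientation sign inherent in $\vec\sigma$.

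The proof of \eqref{eq:KappaLDecay} is parallel, applying \eqref{eq:Conjecture2} to the series in Proposition~\ref{p:KappaLSum} with $B = l^s X \in \B$ and $\Gamma = l^u(x_-)\vs(x_+)(e_1\cdot\vu)$. The $BV(\S)$-control on this $\Gamma$ follows from Proposition~\ref{p:lLeafTotalVariation} combined with analogous BV estimates on $\vu,\vs$ obtained using the vectorial telescoping of Lemma~\ref{l:RightIteratedDifferences}. The geometric sum over $m\geq 0$ together with the reverse telescoping (now applying the proof of Proposition~\ref{p:KappaLSum} with $A\circ f^n$ replaced by $\rho(A)$) delivers the claimed form of $\kappa^l_\infty$. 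The principal technical obstacle throughout is the bookkeeping between the one-sided evaluations appearing in our explicit series ($\lambda_m^{-1}(x_\pm)$, $\mu_m(x_+)$, $\vu(x_\pm)$, etc.) and the unqualified factors $\lambda_m^{-1},\mu_m$ in the statement of Conjecture~\ref{c:Full}; the resolution is to pre-split integrands by side and absorb the resulting side-dependence into the $BV(\S)$ factor $\Gamma$, which the conjecture is flexible enough to accommodate.
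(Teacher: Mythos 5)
Your overall strategy coincides with the paper's: apply the conjecture termwise to the series representations from Propositions~\ref{p:KappaRhoSum} and~\ref{p:KappaLSum}, sum over $m$, and identify $\kappa^\rho_\infty$, $\kappa^l_\infty$ by running those propositions with $A\equiv 1$ (equivalently, replacing $A\circ f^{n\pm m}$ by $\rho(A)$). Your treatment of \eqref{eq:KappaRhoDecay} is sound, and in fact more scrupulous than the paper's about splitting the jump $\lambda_m^{-1}(x_+)-\lambda_m^{-1}(x_-)$ into two one-sided applications of \eqref{eq:Conjecture1} and about the $BV(\S)$ control of $\Gamma=\vu\cdot e_1$; the paper simply takes $\Gamma=1$ and leaves that bookkeeping implicit. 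Here the observable is $A\circ f^{n+m}$, which matches \eqref{eq:Conjecture1} exactly, so the claimed $c^n\theta^m$ bounds and the geometric sum in $m$ go through.

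There is, however, a concrete gap in your argument for \eqref{eq:KappaLDecay}. The $m$-th term of the series in Proposition~\ref{p:KappaLSum} contains $A\circ f^{n-m}$, whereas \eqref{eq:Conjecture2} is stated for $A\circ f^{n}$ with $n\geq 0$: to match it you must take the conjecture's exponent to be $n-m$, which (i) is only legitimate when $m\leq n$, so the conjecture cannot be applied termwise to the tail $m>n$ at all, and (ii) yields a bound of the form $c^{n-m}\theta^m$ rather than the ``common exponentially-decaying factor $c^n$'' times a geometric series in $m$ that your first paragraph asserts. The repair, which is what the paper does, is to split the sum at $m=n$: for $m\geq n$ bound the term trivially by $C\|A\|_\infty\|X\|_\infty\,\mu^m\leq C\mu^n$ using $|\mu_m|\leq\mu^m$ and the finiteness of $\condmeas$, and for $m<n$ apply \eqref{eq:Conjecture2} and observe that $\sum_{m<n}c^{n-m}\theta^m\leq C'\tilde{c}^{\,n}$ for some $\tilde{c}\in(\max\{c,\theta\},1)$. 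Without this split the ``parallel'' termwise application is not valid. Apart from this point, your write-up follows the paper's proof.
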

\begin{proof}[Proof of Proposition~\ref{p:KappaLRhoDecay}]
	Notice that by applying Proposition~\ref{p:KappaRhoSum} with $A \equiv 1$ we have that 
	\[ \kappa^\rho_\infty = \rho(A) \sum_{m=1}^\infty \int_{\ell_{\S}} (l^u X) \circ f^m\, (\lambda_m^{-1}(c_+) - \lambda_m^{-1}(c_-))\, \dd\rho_\S(c). \]
		We can then apply the main conjecture (\ref{eq:Conjecture1}) to the expression in Proposition~\ref{p:KappaRhoSum} with $B = l^u X$, $\Gamma = 1$, giving that
	\[ |\kappa^\rho_n - \kappa^\rho_\infty | \leq \sum_{m=1}^\infty C \|A\|_{C^\Aord} \|l^s X\|_{\mathcal{B}} c^n \theta^m \leq C\|A\|_{C^\Aord} \|X\|_{\mathcal{B}} (1-\theta)^{-1} c^n,\]
	giving (\ref{eq:KappaRhoDecay}).
	\\
	
	In a similar fashion, by applying Proposition~\ref{p:KappaLSum}, we have that
	\[ \kappa^l_\infty = - \rho(A) \sum_{m=0}^\infty \int_{\S} (l^s X)\circ f^{-m} \, l^u(x_-) \vs(x_+)\, \mu_m\, \dd \rho_\S. \]
	
	To prove (\ref{eq:KappaLDecay}) we can split the expression in Proposition~\ref{p:KappaLSum} into two parts: one with $m \geq n$, which we can bound trivially using the decay in $\mu_m$, and one with $m < n$, to which we apply (\ref{eq:Conjecture2}) with $B = l^s X$, $\Gamma(c) = - l^u(c_-) \vs(c_+)$ for $c \in \ell_{\S}$ (which can be extended away from the critical line e.g. by $\Gamma(x,y) = \Gamma(0,y)$). This gives 
		\begin{align*} |\kappa^l_n - \kappa^l_\infty | &\leq \sum_{m=1}^{n-1} C \|A\|_{C^\Aord} \|l^s X\|_{\mathcal{B}} \|  - l^u(x_-) \vs_+ \|_{\mathcal{B}} c^{n} \theta^m +\\
		&\qquad \sum_{m=n}^\infty 2 \|A\|_{\infty} \|l^s X\, l^u(x_-) \vs_+ \|_{\infty} \| \mu_m \|_\infty \rho_\S(1)\\
			& \leq C\|A\|_{C^\Aord} \| X \|_{\mathcal{B}} (1-\theta)^{-1} c^n + C\|A\|_{\infty} \| X \|_{\infty} \mu^{n},\end{align*}
		yielding the required exponential decay.
\end{proof}

Of course, it is necessary to show that the limit of $\kappa_n$ as $n \to \infty$ is in fact zero:

\begin{proposition}\label{p:KappaInfinity}
	\[ \kappa^X_\infty + \kappa^\rho_\infty + \kappa^l_\infty = 0. \]
\end{proposition}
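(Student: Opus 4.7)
The plan is to specialise the integration-by-parts decomposition of Proposition~\ref{p:Decomposition} to the constant observable $A \equiv 1$. Inspecting Propositions~\ref{p:KappaX} and~\ref{p:KappaLRhoDecay}, each of $\kappa^X_\infty$, $\kappa^\rho_\infty$, $\kappa^l_\infty$ already appears as $\rho(A)$ multiplied by an $A$-independent constant; moreover each such constant is (up to sign conventions) exactly the value taken by the corresponding $\kappa^\bullet_n$ when $A \circ f^n$ is replaced by $1$, and these values are manifestly $n$-independent since $1 \circ f^n \equiv 1$. Thus it suffices to prove that the appropriate signed sum of the three constants vanishes.

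To see this, I would apply Proposition~\ref{p:Decomposition} itself to the observable $A \equiv 1$. Since $\nabla(A \circ f^n) = 0$ in this case, both $\kappa_n$ and $\kappa^s_n$ are identically zero. The decomposition $\kappa_n = \kappa^s_n + \kappa^X_n + \kappa^\rho_n + \kappa^l_n$ therefore collapses to the identity
$$\kappa^X_n|_{A\equiv 1} + \kappa^\rho_n|_{A\equiv 1} + \kappa^l_n|_{A\equiv 1} = 0,$$
and multiplying by $\rho(A)$ and using the factorisations of the preceding paragraph yields the claim.

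Equivalently, one can extract the underlying identity directly from the integration-by-parts step in the proof of Proposition~\ref{p:Decomposition}: with $A \equiv 1$ the inner integrand $\tfrac{\dd}{\dd t}(A\circ f^n)(\pi(\vec I,t))\,(l^u X)(\pi(\vec I,t))$ vanishes on every segment $\vec I$, so the boundary and interior terms produced by integration by parts must cancel. After invoking the product rule $\dd(l^u X) = \dd l^u\,X + l^u DX\vu\,\vu\cdot(q_{\vec I}-p_{\vec I})\,\dd t$ and the disintegration $\dd\rho = \vu\cdot(q_{\vec I}-p_{\vec I})\,\dd t\,\dd\hat\rho$ that transforms the second product-rule piece back into an integral over $\M$, this cancellation reads
$$\int_{\vec\L}\bigl[(l^u X)(q_{\vec I}) - (l^u X)(p_{\vec I})\bigr]\,\dd\vec\sigma(\vec I) \;=\; \int_{\vec\L}\int_{\vec I}\dd l^u\,X\,\dd\vec\sigma(\vec I) \;+\; \int_\M l^u(DX)\vu\,\dd\rho,$$
which, multiplied by $\pm\rho(A)$ and rewritten using the definitions of the three limits, is exactly $\kappa^X_\infty + \kappa^\rho_\infty + \kappa^l_\infty = 0$.

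The only real work is sign bookkeeping between the definitions of $\kappa^\bullet_n$ in Proposition~\ref{p:Decomposition} and those of $\kappa^\bullet_\infty$ in Propositions~\ref{p:KappaX} and~\ref{p:KappaLRhoDecay}, making sure that the three factorisations $\kappa^\bullet_\infty = \pm\rho(A)\,\kappa^\bullet_n|_{A\equiv 1}$ combine so that the trivially-zero relation at $A\equiv 1$ propagates to the identity at infinity with the correct signs. No new analytic input beyond the structures already established is required.
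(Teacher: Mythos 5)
Your proposal is correct and is precisely the paper's own argument: the published proof of this proposition reads, in full, ``Apply Proposition~\ref{p:Decomposition} in reverse, setting $A \equiv 1$.'' Your more explicit version --- noting that $\nabla(1\circ f^n)=0$ kills $\kappa_n$ and $\kappa^s_n$, and that each $\kappa^\bullet_\infty$ factors as $\rho(A)$ times the $A\equiv 1$ specialisation of $\kappa^\bullet_n$ --- is exactly the bookkeeping the paper leaves implicit.
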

\begin{proof}
	Apply Proposition~\ref{p:Decomposition} in reverse, setting $A \equiv 1$.
\end{proof}

At last, we can prove the main theorem.

\begin{proof}[Proof of Theorem~\ref{t:Formal}]
	From Propositions~\ref{p:KappaS},~\ref{p:KappaX} and~\ref{p:KappaLRhoDecay}, we have
	\[ | \kappa^s_n + (\kappa^X_n - \kappa^X_\infty) + (\kappa^l_n - \kappa^l_\infty) + (\kappa^\rho_n - \kappa^\rho_\infty) | \leq C c^n \| A\|_{C^\Aord} \| X \|_{C^2} \]
	for some $c<1$ and $C>0$. Then, applying Propositions \ref{p:Decomposition} and \ref{p:KappaInfinity}, we have what is required.
\end{proof}

\appendix
\section{Proofs of various propositions}
	\subsection{Proof of Proposition~\ref{p:DecayOfCorrelations}}
	\begin{proof}[Proof of Proposition~\ref{p:DecayOfCorrelations}]
		We claim the Lozi maps satisfy the conditions of \cite[Theorem~2.5]{BaladiGouezel10}. Most necessary properties already hold by assumption, but in particular, the growth of the number of singularity curves intersecting at a single point is polynomial, as each step creates a finite number of singularity curves, and $f$ is a homeomorphism onto its image. As a result, there exists a Banach space $\Ht$ such that the {\it transfer operator}
		\[ (\mathcal{L} \phi)(x) := \mathbb{1}_{x \in \im f} | \det Df^{-1}(x) | \phi(f^{-1}(x)) \]
		has spectral radius equal to $1$ and has only isolated spectrum in a ball of radius strictly less than one. The SRB measure $\rho$ is the unique $1$-eigenfunction of $\mathcal{L}$, with corresponding left eigenfunctional being total (Lebesgue) integral on $\M$. Because the map $f$ is mixing, this is the only spectrum on the unit circle, and the rest must therefore be contained in a ball of radius $\xi < 1$. 
		
		Hence, there exists a constant $C$ such that for all $n > 1$,
		\[ \left\| \mathcal{L}^n \phi - \rho \int \phi \right\|_\Ht \leq C \xi^n \| \phi \|_\Ht.  \]
		Supposing that ${\phi_m}_{m \in \N}$ are $C^\Aord$ test functions such that the $\phi_m\, \dd x$ converge in $\Ht$ to $\phi$, we have, integrating against a $C^1$ function $A$, that by a change of variable
		\[ \int A \mathcal{L}^n \phi_m\, \dd x = \int (A \circ f^n)\, \phi_m\, \dd x. \]
		It can be seen from the definition of $\Ht$ in \cite{BaladiGouezel10} that integrating against $A$ is a bounded functional in $\Ht$. We therefore recover in the limit as $m \to \infty$ that
		\[ \int A \mathcal{L}^n \phi = \int (A \circ f^n)\, \phi, \]
		and obtain the required bound.
	\end{proof}


\subsection{Proof of Proposition~\ref{p:CriticalLineSelfIntersectionMeasure}}
To prove Proposition~\ref{p:CriticalLineSelfIntersectionMeasure} we require the following result:

\begin{proposition}\label{p:CriticalLineSelfIntersections}
	The critical line $\ell_{\S}$ intersects all $f^n(\ell_{\S})$ transversely for $n \neq 0$.
\end{proposition}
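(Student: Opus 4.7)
The plan is to show that the tangent to every smooth piece of $f^n(\ell_{\S})$ at every intersection point with $\ell_{\S}$ is not parallel to the vertical direction $e_2$. Since the Lozi map is piecewise affine, these smooth pieces are themselves line segments, so transversality reduces to a pointwise comparison of tangent directions. The engine of the argument is the Lozi-specific identity $Df\cdot e_2 = e_1$ (immediate from the matrix form of the Jacobian), combined with the observation that $e_1 \in \inte\C^u$ and that $\C^u$ is forward-invariant under $Df$. Together these give $Df^k\cdot e_1 \in \inte\C^u$ for all $k \geq 0$, and by the definition of the unstable cone, vectors in $\C^u$ are uniformly bounded away from vertical.

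For $n \geq 1$: A smooth piece $\vec I$ of $f^n(\ell_{\S})$ is the image, under a local affine branch of $f^n$ with derivative $A$, of a subsegment of $\ell_{\S}$. Its tangent direction is $A\cdot e_2$. Peeling off the first factor, $A\cdot e_2 = Df^{n-1}(f(q))\cdot(Df(q)\cdot e_2) = Df^{n-1}(f(q))\cdot e_1 \in \inte\C^u$, which is transverse to $e_2$. This covers all intersections of $\vec I$ with $\ell_{\S}$, including corners of $f^n(\ell_{\S})$ lying on $\ell_{\S}$, since each incident smooth piece is handled by the same estimate.

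For $n = -m$ with $m \geq 1$: At an intersection point $p \in \ell_{\S}\cap f^{-m}(\ell_{\S})$, on each side of $\ell_{\S}$ the map $f^m$ is locally affine with derivative $A_\pm$, and the smooth piece of $f^{-m}(\ell_{\S})$ on that side passing through $p$ has tangent $A_\pm^{-1}e_2$. Elementary linear algebra gives $A_\pm^{-1}e_2 \parallel e_2 \iff e_2$ is an eigenvector of $A_\pm \iff A_\pm e_2 \parallel e_2$. But by the same computation as above, $A_\pm\cdot e_2 = Df^{m-1}(f(p_\pm))\cdot e_1 \in \inte\C^u$, which is not parallel to $e_2$. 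Hence $A_\pm^{-1}e_2$ is also not vertical, and the intersection at $p$ is transverse on each side.

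No serious obstacle arises: the argument is essentially a two-line consequence of the Lozi identity $Df\cdot e_2 = e_1$ and forward invariance of the unstable cone. The only minor point of care is that in the backward case the point $p \in \ell_{\S}$ sits on the shared boundary of two cylinders of $f^m$, so both one-sided branches must be treated separately — but both yield non-vertical tangents by the same cone-invariance argument.
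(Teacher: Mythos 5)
Your proof is correct. It is essentially the mirror image of the paper's argument: the paper handles the backward iterates first, observing that $\ell_{\S}$ is transverse to the stable cone while the tangents of $f^{n}(\ell_{\S})$ for $n\leq -1$ lie properly inside it (by backward invariance of $\C^s$ and the fact that $e_2$ lies in the cone whose $Df^{-1}$-image defines $\C^s$), and then obtains the forward case by applying $f^n$ to the transverse pair $(f^{-n}\ell_{\S},\ell_{\S})$, transversality being preserved by the piecewise linear isomorphism. You instead make the forward case primary, using the identity $Df\,e_2=e_1$ together with forward invariance of $\C^u$ to place the tangents of $f^n(\ell_{\S})$ in $\inte\C^u$ (hence away from vertical), and you transfer to the backward case via the elementary equivalence $A^{-1}e_2\parallel e_2\iff Ae_2\parallel e_2$. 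Both routes rest on the same mechanism --- cone invariance plus the special alignment of $e_2$ with the cone structure of the Lozi map --- and are of comparable length; your explicit treatment of the two one-sided branches $A_\pm$ at a point of $\ell_{\S}$ is a welcome bit of care that the paper's two-line proof leaves implicit.
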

\begin{proof}
	It is easy to check that the critical line $\ell_{\S}$ lies outside the stable cone, and $f^{n}\ell_{\S}$ lies properly inside the stable cone for all $n \leq -1$. For positive $n$ the result obtains by applying $f^n$ to $f^{-n}\ell_{\S}$ and $\ell_{\S}$.
\end{proof}
	
	\begin{proof}[Proof of Proposition~\ref{p:CriticalLineSelfIntersectionMeasure}]
		Suppose this were not true.
		
		The set of points whose forward orbit intersects $\ell_{\S}$ more than once is
		\begin{equation} \cup_{n = 1}^\infty f^n(\ell_{\S}) \cap \ell_{\S}. \label{eq:ForwardOrbitIntersectionSet} \end{equation}
		We know from Proposition~\ref{p:CriticalLineSelfIntersections} that $f^n(\ell_{\S})$ and $\ell_{\S}$ always intersect transversally. Because both these sets are piecewise curves of finite length, the number of intersection points is finite. Thus, the countable union of these intersections (\ref{eq:ForwardOrbitIntersectionSet}) must be countable. If the $\condmeas$-measure of (\ref{eq:ForwardOrbitIntersectionSet}) is positive, then $\condmeas$ must contain atoms. By \eqref{eq:RhoMidDef}, this means so too must $\hat{\sigma}$, and therefore so too must $\hat{\rho}$.
		
		Now, suppose $I \in \hat\L$ is an atom of $\hat{\rho}$, that is $\hat{\rho}(\{I\}) > 0$. By definition, this means that $\rho(I) > 0$ also. Let $s$ be any point in $I$, so $I = \Wul(s)$. Now, because $\rho$ is $f$-invariant and finite, the union
		\[ \cup_{n =0}^\infty f^{-n}(\Wul(s)) \]
		cannot be disjoint, and since by the properties of local unstable manifolds $f^{-n}(\Wul(s)) \subset \Wul(f^{-n}(s))$ , we must have that $f^{-n^*}(\Wul(s)) \subset \Wul(s)$ for some $n^* \in \N$. But then since $\rho$ conditioned on $\Wul(s)$ is proportional to the length measure, and $\rho(f^{-n^*}(\Wul(s))) = \rho(\Wul(s))$, we have that $f^{-n^*}(\Wul(s)) = \Wul(s)$ and so $\Wul(s) = f^{n^*}(\Wul(s)$. However, $f$ is a bijection that expands unstable manifolds, so this cannot happen. Thus we have a proof by contradiction.
	\end{proof}

\subsection*{Acknowledgements}

This research has been supported by the European Research Council (ERC) under the European Union's Horizon 2020 research and innovation programme (grant agreement No 787304), as well as by the ETH Zurich Institute for Theoretical Studies.

The author would like to thank Viviane Baladi for suggesting the project and for her detailed comments on the manuscript at various stages.

\subsection*{Data availability statement}
The datasets generated and/or analysed during the current study are available from the author on reasonable request.

\subsection*{Conflict of interest statement}
The author has no conflicts of interest to declare.

\bibliographystyle{plain}
\bibliography{lozi}

\end{document}